\newtheorem{thm}{Theorem}[section]
\newtheorem{lemma}[thm]{Lemma}
\newtheorem{prop}[thm]{Proposition}
\newtheorem{cor}[thm]{Corollary}
\theoremstyle{definition}
\newtheorem{defn}[thm]{Definition}
\newtheorem{rem}[thm]{Remark}
\newtheorem{remark}[thm]{Remark}
\newtheorem{notation}[thm]{Notation}
\newcommand{\bN}{ {\mathbb N} }
\newcommand{\cP}{{\cal P}}
\newcommand{\cO}{\Omega}
\newcommand{\Otilda}{ \widetilde{\Omega} }
\newcommand{\ob}{ {\mathcal O}_{nc}^{B} }
\newcommand{\cS}{{\cal S}}
\newcommand{\bZ}{ {\mathbb Z} }
\newcommand{\induc}{\downarrow}
\newcommand{\lgb}{  \ell_B }
\newcommand{\lgd}{  \ell_D }
\newcommand{\snc}{ {\cal S}_{nc} }
\newcommand{\snca}{ {\cal S}^{A}_{nc} }
\newcommand{\sncb}{ {\cal S}^{B}_{nc} }
\newcommand{\sncd}{ {\cal S}^{D}_{nc} }
\newcommand{\ncb}{ NC^{B} }
\newcommand{\ncd}{ NC^{D} }
\newcommand{\ecdef}{ \stackrel{\mathrm{def}}{\Longleftrightarrow} }
\begin{document}

\begin{center}
{\bf\Large Posets of annular non-crossing partitions of types B and
D}

\vspace{14pt}

Alexandru Nica \footnote{Research supported by a Discovery Grant
from NSERC, Canada, and by a PREA award from the Province of
Ontario.} \hspace{2cm} Ion Oancea

\vspace{10pt}

Department of Pure Mathematics, University of Waterloo
\end{center}

\vspace{10pt}

\begin{abstract}
We study the set $\sncb (p,q)$ of annular non-crossing permutations
of type B, and we introduce a corresponding set $\ncb (p,q)$ of
annular non-crossing partitions of type B, where $p$ and $q$ are two
positive integers. We prove that the natural bijection between
$\sncb (p,q)$ and $\ncb (p,q)$ is a poset isomorphism, where the
partial order on $\sncb (p,q)$ is induced from the hyperoctahedral
group $B_{p+q}$, while $\ncb (p,q)$ is partially ordered by reverse
refinement. In the case when $q=1$, we prove that $\ncb (p,1)$ is a
lattice with respect to reverse refinement order.

We point out that an analogous development can be pursued in type D,
where one gets a canonical isomorphism between $\sncd (p,q)$ and
$\ncd (p,q)$. For $q=1$, the poset $\ncd (p,1)$ coincides with a
poset ``$NC^{(D)} (p+1)$'' constructed in a paper by Athanasiadis
and Reiner in 2004, and is a lattice by the results of that paper.
\end{abstract}

\vspace{10pt}

\begin{center}
{\bf\large 1. Introduction}
\end{center}
\setcounter{section}{1}

\noindent Let $p$ and $q$ be two positive integers. Denote $p+q
=:n$, and consider the hyperoctahedral group $B_n$ -- that is, the
group of permutations $\tau$ of $\{ 1, \ldots , n \} \cup \{ -1,
\ldots , -n \}$ with the property that $\tau (-i) = - \tau (i)$ for
every $1 \leq i \leq n$. We will use the notation $\sncb (p,q)$ for
the set of permutations $\tau \in B_n$ that can be drawn without
crossings (in a sense explained precisely in subsection 2.5 and in
Definition \ref{def:3.1} below) inside an annulus which has the
points $1, \ldots , p, -1, \ldots , -p$ marked clockwisely on its
outer circle, and has the points $p+1, \ldots , n, -(p+1), \ldots ,
-n$ marked counterclockwisely on its inner circle. A concrete
example of drawing of a permutation $\tau \in \sncb (p,q)$ is shown
in Figure 1.

\begin{center}
\scalebox{.32}{\includegraphics{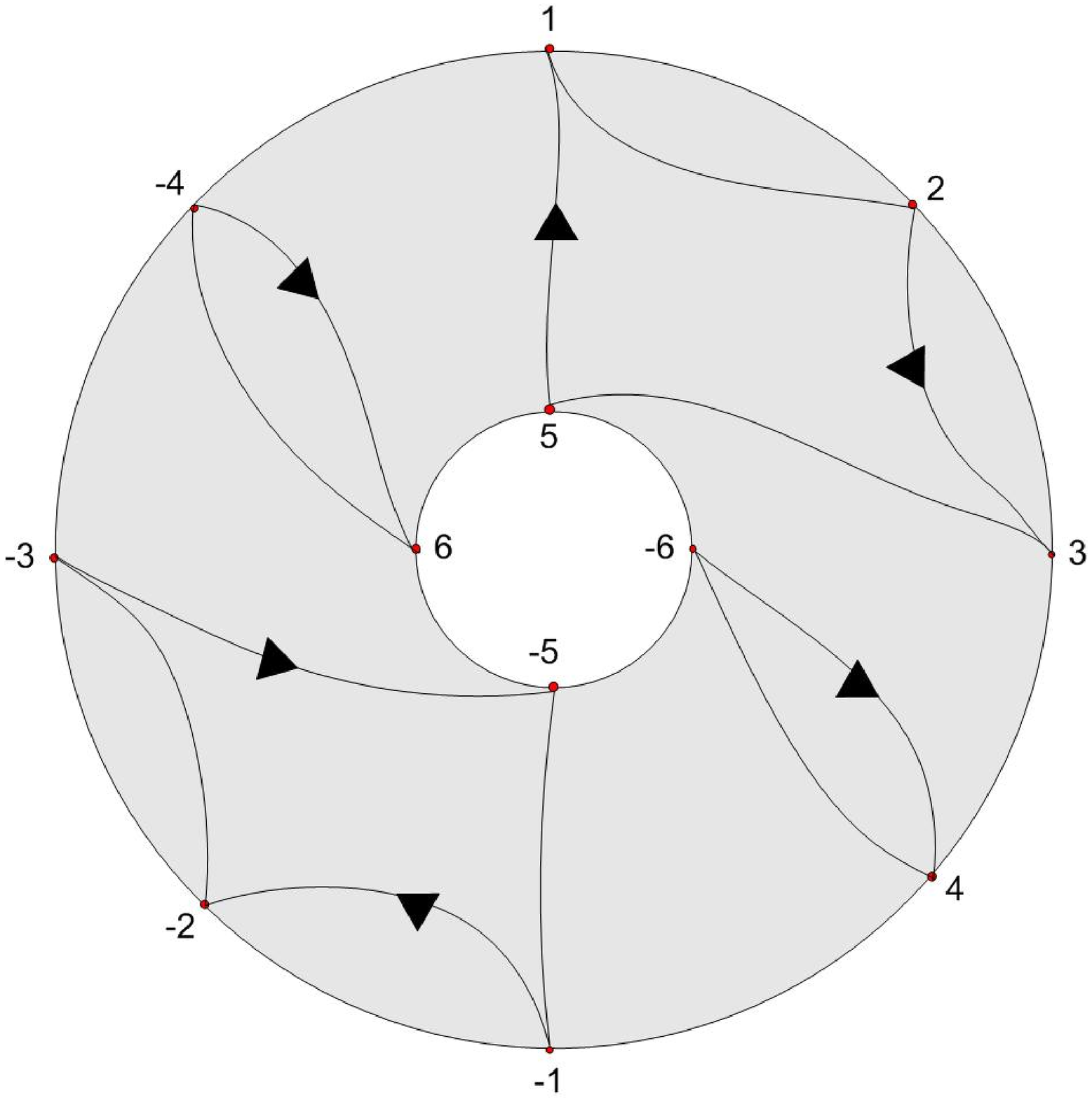}}

{\bf Figure 1.} An example of annular non-crossing permutation of
type B:

$\tau = (1,2,3,5)(4,-6)(-1,-2,-3,-5)(-4,6) \in \sncb (4,2)$.
\end{center}

In recent research literature started by \cite{Be03}, \cite{BW02}
one considers a length function $\lgb : B_n \to \bN \cup \{ 0 \}$
which is invariant under conjugation, and a partial order on $B_n$
defined by the condition that
\begin{equation}  \label{eqn:1.01}
\sigma \leq \tau \mbox{ in $B_n$} \ \ecdef \ \lgb ( \tau ) = \lgb (
\sigma ) + \lgb ( \sigma^{-1} \tau ), \ \ \sigma , \tau \in B_n.
\end{equation}
The first result of the present paper is stated as follows.

\begin{thm}  \label{theorem1}
In the notations introduced above, we have that
\begin{equation}  \label{eqn:1.02}
\sncb (p,q) = \{ \tau \in B_n \mid \tau \leq \gamma \},
\end{equation}
where $\gamma = (1, \ldots , p,-1, \ldots , -p)(p+1, \ldots , n,
-(p+1), \ldots , -n) \in B_n$.
\end{thm}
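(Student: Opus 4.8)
The plan is to convert the order relation $\tau \le \gamma$ into a length-additivity equation, and then to match that equation with the planarity of the drawing through a reduction to the already-understood type-A annular picture. First I would recall the reflection-length formula for the hyperoctahedral group (in the spirit of \cite{Be03}, \cite{BW02}): writing $\tau$ as a permutation of $\{ \pm 1, \ldots, \pm n \}$, its cycles fall into antipodal pairs $\{ C, -C \}$ and into self-antipodal cycles $C = -C$, and if $a(\tau)$ denotes the number of antipodal pairs then
\begin{equation*}
\lgb(\tau) = n - a(\tau).
\end{equation*}
Each of the two cycles of $\gamma$ is self-antipodal, so $a(\gamma)=0$ and $\lgb(\gamma)=n$ is maximal; this is the structural reason to expect $\sncb(p,q)$ to be a full interval, in contrast with the type-A annular situation. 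Substituting the formula into \eqref{eqn:1.01}, the relation $\tau \le \gamma$ becomes $a(\tau)+a(\tau^{-1}\gamma)=n$, while the triangle inequality for $\lgb$ supplies the companion bound $a(\tau)+a(\tau^{-1}\gamma)\le n$ for \emph{all} $\tau \in B_n$. Thus the right-hand side of \eqref{eqn:1.02} is exactly the locus on which this inequality is saturated.

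Next I would set up the ``doubling'' that realizes type B as centrally symmetric type A. Regard $\tau \in B_n$ as a genuine permutation $\hat\tau$ of $\{ \pm 1, \ldots, \pm n \}$ commuting with the central involution $\ee \colon j \mapsto -j$, and let $\hat\gamma \in S_{2n}$ be the corresponding element, whose two cycles have lengths $2p$ and $2q$. The $2n$ marked points of the type-B annulus of Definition \ref{def:3.1} become the boundary points of a type-A annulus with $2p$ points outside and $2q$ inside, on which $\ee$ acts as the $180^{\circ}$ rotation, and a drawing of $\tau$ is the same thing as an $\ee$-invariant drawing of $\hat\tau$. Since a non-crossing drawing is essentially unique up to isotopy, it can be chosen $\ee$-invariant, which yields the equivalence
\begin{equation*}
\tau \in \sncb(p,q) \iff \hat\tau \in \snc(2p,2q),
\end{equation*}
reducing the theorem to a statement about the doubled, type-A annular picture.

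Finally I would feed $\hat\tau$ through the type-A annular non-crossing characterization. Writing $z(\tau)$ for the number of self-antipodal cycles of $\tau$, one has $\lga(\hat\tau) = 2n - 2a(\tau) - z(\tau)$, and similarly for $\tau^{-1}\gamma$. The type-A theory describes $\snc(2p,2q)$ by a genus-zero Euler relation that splits according to whether $\hat\tau$ links the two circles: in the connected case $\lga(\hat\tau)+\lga(\hat\tau^{-1}\hat\gamma)=2n$, and in the disconnected case $\lga(\hat\tau)+\lga(\hat\tau^{-1}\hat\gamma)=2n-2$. Substituting the length formulas, the first relation reads $2\bigl(a(\tau)+a(\tau^{-1}\gamma)\bigr)+\bigl(z(\tau)+z(\tau^{-1}\gamma)\bigr)=2n$ and the second reads $2\bigl(a(\tau)+a(\tau^{-1}\gamma)\bigr)+\bigl(z(\tau)+z(\tau^{-1}\gamma)\bigr)=2n+2$, so each case is poised to collapse onto the single clean equation $a(\tau)+a(\tau^{-1}\gamma)=n$ from the first step.

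The main obstacle is the bookkeeping that glues the two regimes together. Making the last step rigorous requires showing that central symmetry forces $z(\tau)+z(\tau^{-1}\gamma) \in \{0,2\}$, with the value $0$ occurring exactly when $\hat\tau$ connects the two circles and the value $2$ exactly when it does not; then the connected relation and the disconnected relation each reduce to $a(\tau)+a(\tau^{-1}\gamma)=n$, as required. Geometrically this is the assertion that a self-antipodal (``zero'') cycle is precisely the device through which the central symmetry can, or cannot, link the inner and outer circles. I expect to establish it by a parity/Euler analysis of the $\ee$-invariant cycles, checking the boundary cases $\tau=\gamma$ and $\tau=e$ (where $z(\tau)+z(\tau^{-1}\gamma)=2$ and the doubled picture is disconnected) against a linking example such as $(1,p+1)(-1,-(p+1))$ (where the sum is $0$ and the picture is connected). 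Once this dichotomy is in place, both inclusions in \eqref{eqn:1.02} follow at once.
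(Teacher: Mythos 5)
Your skeleton is correct and, in fact, matches the paper's own strategy: the length formula $\lgb(\tau)=n-a(\tau)$, the reformulation of $\tau\le\gamma$ as $a(\tau)+a(\tau^{-1}\gamma)=n$, and the split into $\gamma$-connected and $\gamma$-disconnected cases via the two branches of the genus formula are exactly how Propositions \ref{prop:3.2} and \ref{prop:3.5} of the paper are organized. (Your ``doubling'' step, by the way, is a tautology in this framework: $\sncb(p,q)$ is \emph{defined} as $\snc(X,\gamma)\cap B_n$ in Definition \ref{def:3.1}, so no isotopy argument about $\ee$-invariant drawings is needed.) The genuine gap is that the dichotomy on which your whole plan rests is precisely where the content of the theorem lives, and the method you propose for it would fail. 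As you state it (``central symmetry forces $z(\tau)+z(\tau^{-1}\gamma)\in\{0,2\}$, with $0$ exactly in the connected case''), the claim is false for general $\tau\in B_n$: take $p=q=1$ and $\tau=(1,2,-1,-2)$, which is $\gamma$-connected; one computes $\tau^{-1}\gamma=(1,2,-1,-2)$ as well, so $z(\tau)+z(\tau^{-1}\gamma)=2$ (this $\tau$ has genus one). So the dichotomy can only be proved after restricting to the two classes you are comparing (genus-zero permutations on one side, $\{\tau\in B_n \mid \tau\le\gamma\}$ on the other), where it is essentially a restatement of the theorem itself; and on those classes no parity/Euler bookkeeping can establish it, because the connected genus-zero relation $2\bigl(a(\tau)+a(\tau^{-1}\gamma)\bigr)+\bigl(z(\tau)+z(\tau^{-1}\gamma)\bigr)=2n$ is arithmetically just as consistent with $(a+a',z+z')=(n-1,2)$ as with $(n,0)$. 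Checking $\tau=\gamma$, $\tau=e$ and one linking example proves nothing in general.

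What is needed to close the gap is exactly what the paper supplies, and it is not bookkeeping. For the connected genus-zero case one must show $z(\tau)=z(\tau^{-1}\gamma)=0$; the paper does this with a planarity argument (Lemma \ref{lemma:3.3} and Proposition \ref{prop:3.4}: an inversion-invariant orbit coexisting with a $\gamma$-connected pair of orbits $C,-C$ forces the crossing pattern (AC-3)), and then, to handle the term $z(\tau^{-1}\gamma)$, it invokes Corollary 6.5 of \cite{MN04}, which guarantees that $\tau^{-1}\gamma$ is again annular non-crossing and $\gamma$-connected whenever $\tau$ is --- an input your proposal never mentions. For the disconnected case (needed in \emph{both} inclusions) one must rule out the genus-one configuration $z+z'=0$, $a+a'=n$; this cannot be done by counting in the annulus, and the paper instead restricts $\tau$ to $Y$ and $Z$ and invokes the known disc-case theorem of type B (Equation (\ref{eqn:1.07})), as in Proposition \ref{prop:3.2}. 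Only the backward connected inclusion is pure counting: given $a+a'=n$, the connected genus inequality forces $z+z'\le 0$, hence $z+z'=0$ and genus zero --- that quarter of your plan is sound and coincides with the paper's argument, but the other three quarters require the missing ideas above.
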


\begin{notation}   \label{def:1.2}
$1^o$ For a permutation $\tau \in B_n$ we will denote by $\Omega (
\tau )$ the partition of $\{ 1, \ldots , n \} \cup \{ -1, \ldots ,
-n \}$ into cycles of $\tau$. If $A$ is a block of $\Omega ( \tau )$
then, clearly, the set $-A := \{ -a \mid a \in A \}$ is a block of
$\Omega ( \tau )$ as well. We have that either $A \cap ( -A ) =
\emptyset$ or $A = -A$; in the latter case we say that $A$ is an
{\bf inversion-invariant block}, or that it is a {\bf zero-block} of
$\Omega ( \tau )$.

$2^o$ Let $\tau$ be in $B_n$, and let us write explicitly
\begin{equation}  \label{eqn:1.03}
\Omega ( \tau ) = \{ A_1, -A_1, \ldots , A_k, -A_k, Z_1, \ldots ,
Z_l \},
\end{equation}
where $Z_1, \ldots , Z_l$ ($0 \leq l \leq n$) are the zero-blocks of
$\Omega ( \tau )$. Then we denote
\begin{equation}  \label{eqn:1.04}
\Otilda ( \tau ) = \{ A_1, -A_1, \ldots , A_k, -A_k, Z_1 \cup \cdots
\cup Z_l \}
\end{equation}
(a new partition of $\{ 1, \ldots , n \} \cup \{ -1, \ldots , -n
\}$, which has at most one zero-block).
\end{notation}

In this paper we introduce the set $\ncb (p,q)$ of partitions of $\{
1, \ldots , n \} \cup \{ -1, \ldots , -n \}$, defined as follows.

\begin{defn}
In the notations set above, we put
\begin{equation}  \label{eqn:1.05}
\ncb (p,q) := \{ \Otilda ( \tau ) \mid \tau \in \sncb (p,q) \}.
\end{equation}
We view $\ncb (p,q)$ as a partially ordered set, with partial order
given by reverse refinement ($\pi \leq \rho$ if and only if every
block of $\rho$ is a union of blocks of $\pi$).
\end{defn}

\begin{thm} \label{theorem2}
The function
\begin{equation}  \label{eqn:1.06}
\sncb (p,q) \ni \tau \mapsto \Otilda ( \tau ) \in \ncb (p,q)
\end{equation}
is bijective, and is moreover a poset isomorphism, where the partial
order on $\sncb (p,q)$ is the one induced from $B_n$ (as in Equation
(\ref{eqn:1.01})), while $\ncb (p,q)$ is partially ordered by
reverse refinement.
\end{thm}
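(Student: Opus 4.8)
The plan is to carry out the argument inside the larger, purely type-$A$ picture and then cut it down by the negation symmetry. Writing $\iota$ for the involution $i \mapsto -i$, one has $B_n = \{ \sigma \in S_{\{\pm 1, \ldots, \pm n\}} : \sigma \iota = \iota \sigma \}$, and (from the model of Subsection 2.5) $\sncb(p,q) = \snca(2p,2q) \cap B_n$, where $\snca(2p,2q)$ denotes the type-$A$ annular non-crossing permutations of the $2n$ points $\{\pm 1, \ldots, \pm n\}$. I will use the type-$A$ isomorphism $\sigma \mapsto \cO(\sigma)$ from $\snca(2p,2q)$ onto the corresponding lattice of type-$A$ annular non-crossing partitions under reverse refinement, together with Theorem \ref{theorem1}. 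I also record the two length formulas that drive the rank bookkeeping: $\lga(\sigma) = 2n - (\#\text{ cycles of }\sigma)$, and, for $\tau \in B_n$ whose partition $\cO(\tau)$ consists of $k$ mutually negated pairs $\{A_i, -A_i\}$ together with some zero-blocks, $\lgb(\tau) = n - k$; equivalently $2\lgb(\tau) = \lga(\tau) + z(\tau)$, where $z(\tau)$ is the number of zero-blocks.

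For bijectivity, surjectivity holds by the definition of $\ncb(p,q)$, so only injectivity is at issue. Since $\sncb(p,q) \subseteq \snca(2p,2q)$ and $\cO$ is injective on the latter, $\tau$ is determined by $\cO(\tau)$; it therefore suffices to recover $\cO(\tau)$ from $\Otilda(\tau)$ for $\tau \in \sncb(p,q)$. The non-zero blocks are untouched by the passage to $\Otilda$, and each carries the cyclic order forced on it by the annular non-crossing drawing. The only genuine point is to split the merged zero-block of $\Otilda(\tau)$ back into the individual zero-cycles $Z_1, \ldots, Z_l$ of $\tau$: these are $\iota$-invariant cycles, hence loops winding around the central hole of the annulus, and the non-crossing condition forces them into a unique nested configuration, so the reconstruction is unique. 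This is where the geometric model of Subsection 2.5 and Definition \ref{def:3.1} is used.

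For the order relation I treat the two implications separately. The forward implication $\sigma \leq \tau \Rightarrow \Otilda(\sigma) \leq \Otilda(\tau)$ follows from the general fact (Brady--Watt) that in a real reflection group $\sigma \leq \tau$ in absolute order forces $\mathrm{Mov}(\sigma) \subseteq \mathrm{Mov}(\tau)$, i.e. $\mathrm{Fix}(\tau) \subseteq \mathrm{Fix}(\sigma)$ for the reflection action on $\mathbb{R}^n$. It then remains to translate this inclusion: a vector in $\mathrm{Fix}(w)$ is constant up to sign along each non-zero block-pair of $w$ and vanishes on the coordinates lying in a zero-block, so $\mathrm{Fix}(\tau) \subseteq \mathrm{Fix}(\sigma)$ says precisely that every block of $\Otilda(\tau)$ is a union of blocks of $\Otilda(\sigma)$, that is, $\Otilda(\sigma) \leq \Otilda(\tau)$ in reverse refinement. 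The sign compatibilities needed in this translation are automatic for elements of $\sncb(p,q)$ because of the non-crossing drawing.

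The converse implication, $\Otilda(\sigma) \leq \Otilda(\tau) \Rightarrow \sigma \leq \tau$, is the main obstacle, exactly as in type $A$, where reverse refinement characterizes the order only under the non-crossing hypothesis (fixed-space containment alone is far too weak). Here the formula $\lgb(\tau) = n - k$ shows that $\Otilda$ is rank-preserving, the reverse-refinement rank of a partition in $\ncb(p,q)$ being $n$ minus its number of non-zero block-pairs; combined with the forward implication this already makes $\Otilda$ an order- and rank-preserving bijection of graded posets, hence one sending covers to covers. To close the gap I would prove directly that $\lgb(\tau) = \lgb(\sigma) + \lgb(\sigma^{-1}\tau)$ whenever $\Otilda(\sigma) \leq \Otilda(\tau)$. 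Using the non-crossing drawings, this reduces to a blockwise computation inside each block of $\Otilda(\tau)$: on a non-zero block-pair $\sigma$ induces a type-$A$ non-crossing permutation and the lengths add by the type-$A$ result, while on the merged zero-block the additivity is checked against the nested winding structure produced in the bijectivity step. I expect this blockwise length count, and in particular the correct accounting of the single zero-block, to be the delicate part of the argument.
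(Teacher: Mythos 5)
Your argument breaks at its foundation: the ``type-A isomorphism'' you invoke does not exist. The orbit map $\sigma \mapsto \cO ( \sigma )$ is \emph{not} injective on $\snca (2p,2q)$, i.e.\ on $\snc (X, \gamma )$: for a $\gamma$-connected orbit spanning both circles, the cyclic order of the points is not determined by the orbit, since one can ``rotate'' the inner-circle points and obtain a different annular non-crossing permutation with exactly the same orbits. (Smallest instance: with reference permutation $(1,2)(3,4)$, both $(1,2,3,4)$ and $(1,2,4,3)$ are annular non-crossing with the single orbit $\{ 1,2,3,4 \}$.) This is precisely the failure that Remark \ref{rem:1.6} (Framework II) warns about, citing Section 4 of \cite{MN04}; moreover $\snca (p,q)$ is not an interval of the absolute order, so no poset isomorphism onto a partition poset is available in type A. Consequently your key injectivity step --- ``$\tau$ is determined by $\cO ( \tau )$ since $\cO$ is injective on $\snca (2p,2q)$'' --- is unjustified, and the rest of your plan inherits the gap, because the blockwise length count for the converse implication presupposes a well-defined reference cyclic order on each block of $\Otilda ( \tau )$. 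The real content of Theorem \ref{theorem2} is that the type-B symmetry repairs this failure: in the paper this is Lemma \ref{lemma4.3}, which pins down $\tau \induc A$ on a $\gamma$-connected orbit $A$ by testing against $\lambda_{-y,-z}$, whose distinguished points $-y,-z$ lie in $-A$, disjoint from $A$ --- an argument with no type-A analogue. This yields the canonical permutations $\mu_A$ (Definition \ref{definition4.4}) and hence injectivity (Remark \ref{rem:4.6}, Proposition \ref{prop:4.9}); and your assertion that ``$\sigma$ induces a non-crossing permutation on each block of $\Otilda ( \tau )$, so lengths add blockwise'' is exactly Lemmas \ref{lemma4.13} and \ref{lemma4.14}, whose proofs again use the symmetry in an essential way (Case 2 of Lemma \ref{lemma4.13} produces the pattern (AC-3) via the points $-b,-d$). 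You assert these statements from ``the non-crossing drawings'' without proof and explicitly defer the zero-block accounting, so even granting the type-A claim, the order-reflecting direction remains an outline rather than a proof.

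Two smaller points. Your forward implication via Brady--Watt (absolute order forces fixed-space containment, which one then translates into reverse refinement of the $\Otilda$-partitions) is correct and is a legitimate alternative to the paper's cover-by-cover verification (Lemma \ref{lemma4.7}, via Proposition \ref{proposition3.5}); note that, as in the paper, it needs no non-crossing hypothesis at all. On the other hand, your picture of the merged zero-block as ``many nested loops'' to be re-separated is off: by Proposition \ref{prop:3.4} together with Reiner's at-most-one-zero-block theorem, a permutation in $\sncb (p,q)$ has at most \emph{two} inversion-invariant orbits, one contained in $Y$ and one in $Z$, and the splitting of the merged block is simply intersection with $Y$ and with $Z$ (Lemma \ref{lemma4.8}). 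That part is a minor repair; the irreparable step is the claimed injectivity of the orbit map in annular type A.
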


The fact that $\Otilda ( \tau )$ (rather than $\Omega ( \tau )$
itself) is used in Theorem \ref{theorem2} comes from an
order-preservation issue. The function $\tau \mapsto \Omega ( \tau
)$ is one-to-one on $\sncb (p,q)$ (see Remark \ref{rem:4.6} below),
but is not order-preserving -- it is immediate, for instance, that
there exist permutations $\tau \in \sncb (p,q)$ such that $\cO (
\tau ) \not\leq \cO ( \gamma )$ (even though Theorem \ref{theorem1}
asserts that $\tau \leq \gamma$ for every $\tau \in \sncb (p,q)$).
The adjustment from $\Omega ( \tau )$ to $\Otilda ( \tau )$ corrects
this problem.

It is natural to ask whether $\ncb (p,q)$ is a lattice under the
reverse refinement order. This is equivalent, by Theorem
\ref{theorem2}, to asking if $\sncb (p,q)$ is a lattice with respect
to the partial order inherited from $B_n$. It turns out that $\ncb
(p,q)$ is not a lattice when $p,q \geq 2$; but it is nevertheless
interesting to see that that the following holds:

\begin{thm}  \label{theorem3}
For $n \geq 2$, the poset $\ncb (n-1, 1)$ is a lattice. The meet
operation on $\ncb (n-1,1)$ is the restriction of the meet operation
on the lattice of all partitions of $\{ 1, \ldots , n \} \cup \{ -1,
\ldots , -n \}$; that is, for $\pi , \rho \in \ncb (n-1,1)$, the
blocks of the meet $\pi \wedge \rho  \in \ncb (n-1,1)$ are precisely
the non-empty intersections $A \cap B$ where $A$ is a block of $\pi$
and $B$ is a block of $\rho$.
\end{thm}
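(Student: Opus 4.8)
The plan is to present $\ncb(n-1,1)$ as a finite meet-semilattice with a largest element, and then to invoke the standard fact that a finite poset with a top element in which every pair admits a meet is automatically a lattice (the join of $x$ and $y$ being the meet of their finitely many common upper bounds). The top element is easy to locate: with $p=n-1$, $q=1$ the permutation $\gamma$ of Theorem \ref{theorem1} is $\gamma=(1,\ldots,n-1,-1,\ldots,-(n-1))(n,-n)$, whose cycle partition $\Omega(\gamma)$ consists of the two zero-blocks $\{1,\ldots,n-1,-1,\ldots,-(n-1)\}$ and $\{n,-n\}$; hence $\Otilda(\gamma)$ is the one-block partition $\{\{1,\ldots,n,-1,\ldots,-n\}\}$, which lies in $\ncb(n-1,1)$ and is maximal for reverse refinement. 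Everything then reduces to the assertion about meets: for $\pi,\rho\in\ncb(n-1,1)$, the partition $\pi\wedge\rho$ whose blocks are the non-empty intersections $A\cap B$ ($A$ a block of $\pi$, $B$ a block of $\rho$) again belongs to $\ncb(n-1,1)$. Granting this, $\pi\wedge\rho$ refines both $\pi$ and $\rho$, and any common lower bound in $\ncb(n-1,1)$ is in particular a common lower bound in the full partition lattice and therefore refines $\pi\wedge\rho$; so $\pi\wedge\rho$ is the meet inside $\ncb(n-1,1)$ and the displayed formula holds.

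To prove this closure I would check separately the three conditions characterizing membership in $\ncb(n-1,1)$, as described in the preceding sections: inversion-invariance, the presence of at most one zero-block, and the annular non-crossing property. Inversion-invariance is immediate, since $-(A\cap B)=(-A)\cap(-B)$ and $-A,-B$ are again blocks of $\pi,\rho$. The zero-block count is also clean: a block $A\cap B$ of $\pi\wedge\rho$ is inversion-invariant only when both $A$ and $B$ are, for if, say, $A\neq -A$, then $-(A\cap B)=(-A)\cap(-B)$ is a block of $\pi\wedge\rho$ disjoint from $A\cap B$. Consequently the only possible zero-block of $\pi\wedge\rho$ is the intersection of the zero-blocks of $\pi$ and of $\rho$, so $\pi\wedge\rho$ inherits the ``at most one zero-block'' property automatically.

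The substantive point, and the main obstacle, is that the annular non-crossing property is inherited by the common refinement. This condition has two aspects: a pairwise part, that no two blocks cross, and a global part controlling which blocks may reach the inner circle. The pairwise part I would handle by the familiar tracing argument: a crossing of two blocks $A\cap B$ and $A'\cap B'$ of $\pi\wedge\rho$ is witnessed by points lying two-and-two in $A,A'$ and two-and-two in $B,B'$; if $A\neq A'$ this is already a crossing in $\pi$, while $A=A'$ forces $B\neq B'$ and produces a crossing in $\rho$, a contradiction in either case. The global part is where the hypothesis $q=1$ is essential: with only the single pair $n,-n$ on the inner circle, at most the two blocks containing $n$ and $-n$ can meet it, and since forming $\pi\wedge\rho$ only splits existing blocks, the inner-circle structure stays within the range permitted by the criterion. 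I expect the delicate work to be in verifying this global annular condition against the explicit description of $\ncb(n-1,1)$, since it is precisely here that the analogous closure breaks down for $p,q\geq 2$, in accordance with the observation recorded before the theorem that $\ncb(p,q)$ then fails to be a lattice. Once all three conditions are secured, $\pi\wedge\rho\in\ncb(n-1,1)$, and the proof is complete; via the poset isomorphism of Theorem \ref{theorem2} the lattice property transfers to $\sncb(n-1,1)$ as well.
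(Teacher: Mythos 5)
Your high-level skeleton matches the paper's: a top element (the one-block partition $\Otilda(\gamma)$), closure of $\ncb(n-1,1)$ under intersection meet, and the standard fact that a finite meet-semilattice with a largest element is a lattice (the paper's Remark \ref{remark6.1}). Your easy verifications (symmetry of blocks, at most one zero-block, and the ``pairwise'' crossing argument) also correspond, roughly, to the paper's Corollaries \ref{corollary5.5}, \ref{corollary5.8} and to parts of Proposition \ref{proposition5.10}. However, there is a genuine gap, and it sits exactly where the whole difficulty of the theorem lies. In the annulus, the non-crossing property of a partition is \emph{not} a pairwise condition on blocks: the obstruction (AC-3) of Definition \ref{def:2.8} involves three blocks simultaneously. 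The paper's counterexample in Remark \ref{remark5.11} makes this vivid: for $p,q\geq 2$, the meet of $\Omega(\sigma)$ and $\Omega(\tau)$ from Equation (\ref{eqn:5.2}) has blocks $\{1,p+1\},\{2,p+2\},\{-1,-(p+1)\},\{-2,-(p+2)\}$, any two of which pass your tracing test (each pair of blocks comes from a common block of $\pi$ or can be separated inside a block of $\rho$), yet the partition is not in $\ncb(p,q)$. So your pairwise argument, which is the correct argument in the disc, can never by itself establish closure in the annulus; what remains is precisely the three-block pattern (AC-3), and ruling it out is the substance of the theorem.

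Your treatment of this remaining ``global part'' is not a proof: the statement that with $q=1$ ``at most the two blocks containing $n$ and $-n$ can meet [the inner circle], and since forming $\pi\wedge\rho$ only splits existing blocks, the inner-circle structure stays within the range permitted'' does not engage with the actual failure mode. With $Z=\{n,-n\}$, an (AC-3) configuration is still conceivable a priori: one block of the meet containing $\{-z,c\}$ with $-z\in Z$, $c\in Y$; a second block containing $\{y,z\}$ with $y\in Y$, $z\in Z$; and a third block $\{b,d\}\subseteq Y$ interleaving them. Excluding this occupies the paper's entire Section 6: one sets up the blocks $B,B',B''$ of $\pi$ and $C,C',C''$ of $\rho$ containing these pairs, proves that no two of $B,B',B''$ (resp.\ $C,C',C''$) can be distinct but also that they cannot all coincide (Lemma \ref{lemma6.3}), that $B\neq B''$ and $C\neq C''$ (Lemma \ref{lemma6.4}), reduces to the configuration $B'=B$, $C'=C''=-C$ (Remark \ref{remark6.5}), and finally derives a contradiction by exhibiting an (AC-1) crossing in one of the permutations $\phi,\psi\in\sncb(n-1,1)$ representing $\pi,\rho$, via a case analysis on the cyclic position of $-b$ among $b,c,d,y$ (Proposition \ref{proposition6.6}). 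None of this is foreshadowed in your sketch beyond the acknowledgement that ``the delicate work'' remains; since that delicate work is exactly the content that distinguishes $q=1$ from $q\geq 2$, the proposal as it stands does not prove the theorem. A secondary issue: you treat membership in $\ncb(n-1,1)$ as characterized by three partition-level conditions (symmetry, one zero-block, annular non-crossing), but the paper defines $\ncb(p,q)$ as $\{\Otilda(\tau)\mid\tau\in\sncb(p,q)\}$, and any verification of membership must ultimately produce a permutation in $\sncb(n-1,1)$ -- which is why the paper works with the canonical permutations $\mu_A$ of Definition \ref{definition4.4} and the associated permutation of Proposition \ref{proposition5.10}, rather than with blocks alone.
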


\begin{remark}  \label{rem:1.6}
The theorems presented above refer to the combination of two
frameworks for studying non-crossing permutations and partitions
that have appeared (separately from each other) in the recent
research literature. In this remark we comment briefly on how the
results of the present paper are (or are not) analogous to known
results holding in these two separate frameworks.

\vspace{6pt}

{\em Framework I: non-crossing permutations of type B in the disc.}

Theorems \ref{theorem1} and \ref{theorem2} are faithful analogues
for results known to hold for non-crossing permutations and
partitions of type B that are drawn in a disc (rather than in an
annulus). Here partitions were considered before permutations, in
the work of Reiner \cite{R97}. The poset $\ncb (n)$ of (disc)
non-crossing partitions of type B consists of those partitions $\pi$
of $\{ 1, \ldots n \} \cup \{ -1, \ldots , -n \}$ which are
non-crossing with respect to the order $1<2< \cdots <n <-1 < -2 <
\cdots < -n$, and have the symmetry property that if $A$ is a block
of $\pi$ then $-A$ is a block of $\pi$ as well. $\ncb (n)$ embeds
naturally into the hyperoctahedral group $B_n$, and one can define
$\sncb (n)$ as the image of $\ncb (n)$ under this embedding. The
inverse of the canonical bijection $\ncb (n) \mapsto \sncb (n)$ is
precisely the restriction to $\sncb (n)$ of the orbit map $\tau
\mapsto \Omega ( \tau )$ from Notation \ref{def:1.2}.1. It turns out
(see Theorem 4.9 of \cite{BW02}, or Section 4.2 of \cite{Be03}, or
Theorem 3.2 of \cite{BGN03}) that
\begin{equation}  \label{eqn:1.07}
\sncb (n) =  \{ \tau \in B_n \mid \tau \leq \gamma_o \} ,
\end{equation}
where $\gamma_o = (1,2, \ldots,n, -1, -2, \ldots , -n)$ and where
the partial order considered on $B_n$ is the same as above (defined
by the formula (\ref{eqn:1.01})). Moreover, the bijection
\begin{equation}  \label{eqn:1.08}
\sncb (n) \ni \tau \mapsto \Omega ( \tau ) \in \ncb (n)
\end{equation}
is a poset isomorphism, where $\sncb (n)$ is considered with the
partial order from (\ref{eqn:1.01}) while $\ncb (n)$ is partially
ordered by reverse refinement. Theorems \ref{theorem1} and
\ref{theorem2} can be viewed as annular counterparts for these facts
known from the disc case.

\vspace{6pt}

{\em Framework II: annular non-crossing permutations of type A.}

Here we consider the set $\snca (p,q)$ of permutations $\tau$ of $\{
1, \ldots , p+q \}$ that can be drawn without crossings inside an
annulus which has the points $1, \ldots , p$ marked clockwisely on
its outer circle and has the points $p+1, \ldots , p+q$ marked
counterclockwisely on its inner circle. (Unlike in type B, there are
no additional symmetry requirements that $\tau$ has to satisfy.) It
is intriguing that the above Theorems \ref{theorem1} and
\ref{theorem2} {\em are not} counterparts of type B for some
theorems that hold for $\snca (p,q)$. Indeed, the relation between
$\snca (p,q)$ and the poset of partitions of $\{ 1, \ldots , p+q \}$
is marred by the fact that the orbit map $\tau \mapsto \Omega ( \tau
)$ {\em is not one-to-one} on $\snca (p,q)$ (see Section 4 of
\cite{MN04} for a detailed discussion of why this happens). On the
other hand it is easily seen that $\snca (p,q)$ is not an interval
with respect to the natural partial order (analogous to the one from
formula (\ref{eqn:1.01})) that one can define on the group of all
permutations of $\{ 1, \ldots , p+q  \}$. Thus annular non-crossing
permutations of type A don't relate so well to posets of
set-partitions. From this perspective, the goal of the present paper
is to show that the situation improves by quite a bit when one adds
symmetry requirements of type B.
\end{remark}

\begin{remark}
All three theorems presented above also have analogues living in the
framework of Weyl groups of type D. We discuss these analogues in
Section 7 of the paper. For Theorems \ref{theorem1} and
\ref{theorem2}, the corresponding facts about $\sncd (p,q)$ and
$\ncd (p,q)$ are easily derived out of their counterparts of type B
(see Corollaries \ref{cor:7.1} and \ref{cor:7.2} below). Concerning
the type D counterpart for Theorem \ref{theorem3}, it turns out that
$\ncd (n-1,1)$ coincides exactly with the poset ``$NC^{(D)} (n)$''
constructed in the paper \cite{AR04} by Athanasiadis and Reiner, and
is hence a lattice by the results of that paper.
\end{remark}

\begin{remark}
Since introducing the symmetry of type B improves the situation and
leads to nicer posets of annular non-crossing partitions, it is of
clear interest to look at the enumerative properties of these newly
introduced structures. Some results in this direction are obtained
in \cite{GNO}, where the rank-generating function and the M\"obius
function of $\ncb (p,q)$ are studied.
\end{remark}

\begin{remark} \label{rem:1.10}
{\em (Organization of the paper.)} Besides the introduction section,
the paper has six other sections. Section 2 contains a review of
some background and notations. In Section 3 we prove Theorem
\ref{theorem1}, then Section 4 is devoted to discussing the map
$\Otilda$ and to proving Theorem \ref{theorem2}. The proof of
Theorem \ref{theorem3} is divided between the Sections 5 and 6 of
the paper. Section 5 still uses the framework of $\ncb (p,q)$ where
$p,q$ are arbitrary positive integers. We study intersection meets
of partitions from $\ncb (p,q)$, and find out there is only one
possibility for how it can happen that $\pi, \rho \in \ncb (p,q)$,
but the intersection meet $\pi \wedge \rho$ is no longer in $\ncb
(p,q)$: a certain permutation canonically associated to $\pi \wedge
\rho$ must display an annular crossing pattern called ``(AC-3)''
(see Remark \ref{remark5.11} below). In Section 6 we observe that
this unpleasant phenomenon can only take place when both $p$ and $q$
are at least equal to 2, and this gives us the proof of Theorem
\ref{theorem3}. Finally, Section 7 discusses the type D analogues
for the results presented above in type B.
\end{remark}

$\ $

\begin{center}
{\bf\large 2. Background and notations}
\end{center}
\setcounter{section}{2} \setcounter{equation}{0} \setcounter{thm}{0}

\subsection{Some general notations}

For a finite set $X$ we will denote by $\cP (X)$ the set of all
partitions of $X$, and we will denote by $\cS (X)$ the set of all
permutations of $X$.  If $\tau  \in \cS (X)$, then the action of
$\tau$ splits $X$ into {\bf orbits} of $\tau$ (where $x,y \in X$ are
in the same orbit of $\tau$ if and only if there exists $m \in \bZ$
such that $\tau^m (x) =y$). The number of orbits of $\tau$ will be
denoted by $\# ( \tau )$. As already mentioned in Notation
\ref{def:1.2}, the partition of $X$ into orbits of $\tau$ will be
denoted by $\cO ( \tau )$.

Another notation used throughout the paper concerns the concept of
``permutation induced by $\tau \in \cS (X)$ on a subset $A$ of $X$''
(which makes sense even if $A$ is not invariant under the action of
$\tau$). The definition for this goes as follows.

\begin{defn}   \label{def:2.1}
Let $X$ be a finite set, let $\tau$ be a permutation of $X$, and let
$A$ be a non-empty subset of $X$. The {\bf permutation of $A$
induced by $\tau$} will be denoted by $\tau \induc A$, and is the
permutation in $\cS (A)$ defined as follows: for every $a \in A$ we
look at the sequence (of elements of $X$) $\tau (a), \tau^2 (a),
\ldots , \tau^k (a), \ldots$ and define $( \tau \induc A)  (a)$ to
be the first element of this sequence which is again in $A$.
\end{defn}

\subsection{Length-function and partial order on the group $B_n$}

The length function $\lgb :B_n \to \bN \cup \{ 0 \}$ used in
Equation (\ref{eqn:1.01}) of the introduction is defined in terms of
the following set of generators for $B_n$:
\[
\{ (i,j)(-i,-j) \mid 1 \leq i<j \leq n \} \cup \{ (i,-j)(-i,j) \mid
1 \leq i<j \leq n \}
\]
\begin{equation}  \label{eqn:2.01}
\cup \{ (i, -i) \mid 1 \leq i \leq n \} .
\end{equation}
More precisely: for every $\tau \in B_n$, the length $\lgb ( \tau )$
is defined as the smallest possible $k$ such that $\tau$ can be
factored as a product of $k$ generators from (\ref{eqn:2.01}) (with
the convention that a product of 0 generators gives the unit of
$B_n$). It is easily verified that the length $\lgb$ can be
equivalently defined by the formula
\begin{equation}\label{eqn:2.02}
\lgb ( \tau ) = n-m,
\end{equation}
where $m$ is the number of pairs of non-inversion-invariant orbits
of $\tau \in B_m$.

By starting from the length function $\lgb$, one introduces a
partial order relation on $B_n$, in the way described in Equation
(\ref{eqn:1.01}) of the introduction. Later in the paper we will
need to use the explicit description for covers with respect to this
partial order. (Given $\sigma , \tau \in B_n$, recall that $\tau$ is
said to {\bf cover} $\sigma$ when $\sigma < \tau$ and there exists
no $\phi \in B_n$ such that $\sigma < \phi  < \tau$.) This goes as
follows.

\begin{prop}   \label{proposition3.5}
Let $\sigma$ and $\tau$ be two permutations in $B_n$. Then $\tau$
covers $\sigma$ if and only if one of the following four situations
takes place.

(a) $\sigma^{-1} \tau$ is of the form $(i, -i)$, where $i$ and $-i$
belong to different orbits of $\sigma$.

(b) $\sigma^{-1} \tau$ is of the form $(i,j)(-i,-j)$ with $|i| \neq
|j|$, where $i$ and $-i$ belong to the same orbit of $\sigma$, but
$j$ and $-j$ do not belong to the same orbit of $\sigma$.

(c) $\sigma^{-1} \tau$ is of the form $(i,j)(-i,-j)$ with $|i| \neq
|j|$, where no two of $i, -i, j, -j$ belong to the same orbit of
$\sigma$.

(d) $\sigma^{-1} \tau$ is of the form $(i,j)(-i,-j)$ with $|i| \neq
|j|$, where $i$ and $-j$ belong to the same orbit of $\sigma$, and
this orbit is not inversion-invariant (hence does not contain $-i$
and $j$).
\end{prop}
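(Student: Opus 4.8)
The plan is to reduce the determination of covers to a purely length-theoretic criterion, and then to run an orbit-counting analysis for each type of reflection. For the reduction, I would use only the abstract properties of $\lgb$ that follow from its definition as a minimal word length in the generating set (\ref{eqn:2.01}): it is subadditive, it vanishes only at the identity, and it assigns length $1$ to each generator. From these, together with the definition (\ref{eqn:1.01}), I would first prove that $\tau$ covers $\sigma$ if and only if $t := \sigma^{-1}\tau$ is one of the generators in (\ref{eqn:2.01}) and $\lgb(\sigma t) = \lgb(\sigma)+1$. For the easy direction, if $t$ is a generator with $\lgb(\sigma t) = \lgb(\sigma)+1 = \lgb(\sigma)+\lgb(t)$, then $\sigma \leq \tau$ with length gap $1$; since $\sigma < \phi$ forces $\lgb(\sigma) < \lgb(\phi)$, no $\phi$ can lie strictly between. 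Conversely, given a cover I would write a minimal factorization $\sigma^{-1}\tau = t_1 \cdots t_r$ into generators; subadditivity forces $\lgb(\sigma t_1) = \lgb(\sigma)+1$ and $\lgb\bigl((\sigma t_1)^{-1}\tau\bigr) = r-1$, so $\phi := \sigma t_1$ satisfies $\sigma \leq \phi \leq \tau$, whence the cover hypothesis gives $r=1$.

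By formula (\ref{eqn:2.02}), the condition $\lgb(\sigma t)=\lgb(\sigma)+1$ is equivalent to $m(\sigma t)=m(\sigma)-1$, where $m$ denotes the number of antipodal pairs $\{A,-A\}$ of non-inversion-invariant orbits. Writing $c$ for the total number of orbits and $l$ for the number of zero-blocks, I would use the identity $c = 2m + l$ throughout, together with the elementary fact that multiplying by a transposition either merges two orbits into one or splits one orbit into two. I would then split according to the type of $t$.

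For $t=(i,-i)$: if $i$ and $-i$ lie in different orbits, those orbits form an antipodal pair $\{O,-O\}$ that merges into the single zero-block $O\cup(-O)$, decreasing $m$ by $1$ and giving case (a); if they lie in a common (necessarily inversion-invariant) orbit, multiplication splits it into an antipodal pair, increasing $m$, so there is no cover. For $t=(i,j)(-i,-j)$ with $|i|\neq|j|$, which is the main case, $t$ is a product of the two disjoint transpositions $(i,j)$ and $(-i,-j)$, and the change in $m$ depends on the configuration of $\{i,-i,j,-j\}$ among the orbits $A,-A,B,-B$ (where $A,B$ are the orbits of $i,j$). Using the symmetry $x\mapsto -x$ and the symmetry of $t$ in $i$ and $j$, I would enumerate all compatible configurations according to whether $A=-A$, whether $B=-B$, and whether $A$ equals $B$ or $-B$. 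For each one I would track the two successive transposition moves and then decide, via $c=2m+l$ and the action of $x\mapsto -x$ on the resulting cycles, whether each newly inversion-invariant union splits into two zero-blocks or into an antipodal pair. The three configurations yielding $m(\sigma t)=m(\sigma)-1$ are exactly: $A=-A$ with $B\neq -B$, where three orbits merge into one zero-block (case (b)); $A=-B\neq -A$, where the first move merges the antipodal pair $\{A,-A\}$ and the second splits the result into two zero-blocks (case (d)); and $A,-A,B,-B$ all distinct, where the two moves independently collapse the two antipodal pairs into one (case (c)). In the remaining configurations — $A=B\neq -A$, two distinct zero-blocks, or all four points in one zero-block — I would check that $m$ stays equal or increases, so no cover arises. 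Collecting cases (a)–(d) finishes the proof.

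I expect the principal obstacle to be the clean verification, in case (d) and in the excluded case $A=B$, of exactly how many zero-blocks are produced when a cycle that has become inversion-invariant after the first transposition is then split by the second. This is precisely where the two transpositions $(i,j)$ and $(-i,-j)$ interact and cannot be treated as independent; the reliable tools there are the invariant $c=2m+l$ and a direct inspection of how $x\mapsto -x$ permutes the two resulting cycles, which forces them to be either both zero-blocks or a single antipodal pair.
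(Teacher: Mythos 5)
Your argument is correct, but it cannot be matched step-by-step against the paper's own reasoning for a simple reason: the paper gives no proof of Proposition \ref{proposition3.5} at all, it simply refers the reader to Section 3 of \cite{BW02}. So what you have produced is a genuinely different, self-contained route. Your reduction of the cover relation to the condition that $\sigma^{-1}\tau$ is one of the generators in (\ref{eqn:2.01}) with $\lgb(\tau)=\lgb(\sigma)+1$ is the standard subadditivity argument (from a minimal factorization $\sigma^{-1}\tau=t_1\cdots t_r$ one gets $\sigma<\sigma t_1\leq\tau$, forcing $r=1$ for a cover), and the translation through formula (\ref{eqn:2.02}) correctly turns the question into whether the number $m$ of antipodal pairs of non-inversion-invariant orbits drops by exactly $1$. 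Your enumeration of configurations for $t=(i,j)(-i,-j)$ is exhaustive, and the outcomes you assert are the right ones: when the orbit $A$ of $i$ satisfies $A=-B\neq -A$, right-multiplication by the two transpositions converts the antipodal pair $\{A,-A\}$ into two inversion-invariant cycles (your case (d)); when two distinct zero-blocks are merged and re-split, the result is an antipodal pair, so $m$ increases and no cover arises; and when all four points lie in one zero-block, the cycle splits into a zero-block plus an antipodal pair, again increasing $m$. All of these are verified exactly as you indicate, by writing out the cycles explicitly and checking how $x\mapsto -x$ permutes them -- you correctly isolated this as the only delicate point. What your route buys is self-containedness: it uses only ingredients the paper itself supplies, namely the generating set (\ref{eqn:2.01}) and the length formula (\ref{eqn:2.02}). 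What the paper's citation buys is brevity, plus the fact that in \cite{BW02} the cover relations come packaged within the general theory of the absolute order on reflection groups, rather than from ad hoc orbit bookkeeping.
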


For a proof of Proposition \ref{proposition3.5}, see for instance
Section 3 of \cite{BW02}.

\subsection{Non-crossing permutations}

Let $\tau$ and $\gamma$ be permutations of a finite set $X$. Besides
the numbers $\# ( \tau )$ and $\# ( \gamma )$ (that count the orbits
of $\tau$ and respectively of $\gamma$) let us also consider the
number $\# ( \tau , \gamma )$ which counts the orbits for the action
on $X$ of the subgroup of $\cS (X)$ generated by $\{ \tau , \gamma
\}$. The {\bf genus formula} for $\tau$ and $\gamma$ says that the
quantity $g$ defined by
\begin{equation}\label{eqn:2.03}
\Bigl(  |X| + 2 \cdot \# ( \tau , \gamma ) \Bigr) - \Bigl( \# ( \tau
) + \# ( \tau^{-1} \gamma ) + \# ( \gamma ) \Bigr) \ = \ 2g
\end{equation}
has to be a non-negative integer. The significance of $g$ is as of
genus for a certain orientable surface constructed from $\tau$ and
$\gamma$. Formula (\ref{eqn:2.03}) goes back at least to the 1960's
(see \cite{J68}),  and appears in various forms in the literature on
factorizations of permutations (see e.g. Section 2 of \cite{GJ97}).
For a detailed exposition of the underlying theory of graphs on
surfaces see Chapter 1 of \cite{LZ04} (where the above formula can
be found in Section 1.5, Proposition 1.5.3).

In this paper we will reserve the name ``non-crossing'' for the
situation when $g=0$, that is, for the situation when the
non-crossing drawings for $\tau$ and $\gamma$ are made in the plane.
In (\ref{eqn:2.03}) we fix $\gamma$ as our ``reference
permutation'', and we make the following definition.

\begin{defn} \label{def:2.3}
Let $X$ be a finite set and let $\gamma$ be a permutation of $X$.
The set of {\bf non-crossing permutations of $X$ with respect to
$\gamma$} is
\begin{equation} \label{eqn:2.4}
\snc (X, \gamma ) := \Bigl\{ \tau \in \cS (X) \ \vline \ \# ( \tau )
+ \# ( \tau^{-1} \gamma ) + \# ( \gamma ) = |X| + 2 \cdot \# ( \tau
, \gamma )  \Bigr\} .
\end{equation}
\end{defn}

In other words, what we do is to start with a planar picture where
the elements of $X$ are represented as connected by the cycles of
$\gamma$; then $\snc (X, \gamma )$ consists of those permutations
$\tau \in \cS (X)$ which can be drawn without crossings in this
picture. In this paper we are dealing with the situations when $\# (
\gamma ) = 1$ and when $\# ( \gamma ) = 2$. These situations are
discussed in more detail and are illustrated with pictures in the
next two subsections.

\subsection{ \boldmath{$\snc (X, \gamma )$} in the case when
\boldmath{$\# ( \gamma ) = 1$} }

If $\# ( \gamma ) =1$ then $\# ( \gamma , \tau ) =1$ for every $\tau
\in \cS (X)$. The genus formula (\ref{eqn:2.03}) gives us that
\begin{equation}  \label{eqn:2.05}
\# ( \tau ) + \# ( \tau^{-1} \gamma ) \leq |X| +1 , \ \ \forall \,
\tau \in \cS (X),
\end{equation}
and Definition \ref{def:2.3} becomes
\begin{equation}  \label{eqn:2.06}
\snc (X, \gamma ) = \{ \tau \in \cS (X) \mid \# ( \tau ) + \# (
\tau^{-1} \gamma ) = |X| +1  \} .
\end{equation}
The description in (\ref{eqn:2.06}) is very useful, but is not how
one usually introduces $\snc (X, \gamma )$ in the literature on
non-crossing partitions and permutations. (For a survey of the
fairly extensive literature on this topic, see e.g. \cite{S00}.)
When $\# ( \gamma ) =1$, the usual way of introducing $\snc (X,
\gamma )$ is as the set of permutations that ``avoid the crossing
pattern $(1,3)(2,4)$''; this is precisely stated on the right-hand
side of the equivalence (\ref{eqn:2.07}) in Proposition
\ref{prop:2.5} below.

\begin{defn}   \label{def:2.4}
Let $X$ be a finite set, and let $\gamma \in \cS (X)$ be such that
$\# ( \gamma ) = 1$.

$1^o$ Let $\tau$ be a permutation of $X$. If for every orbit $A$ of
$\tau$ we have $\tau \induc A = \gamma \induc A$ (equality of
induced permutations, considered in the sense of Definition
\ref{def:2.1}), then we will say that $\tau$ is {\bf compatible
with} $\gamma$.

$2^o$ Let $\tau$ be a permutation of $X$. If there exist four
distinct elements $a,b,c,d \in X$ such that $\gamma \induc \{
a,b,c,d \} = (a,b,c,d)$ and $\tau \induc \{ a,b,c,d \} =
(a,c)(b,d)$, then we will say that $\tau$ {\bf has the crossing
pattern (DC)} with respect to $\gamma$.
\end{defn}

\begin{prop}   \label{prop:2.5}
Let $X$ be a finite set, and let $\gamma \in \cS (X)$ be such that
$\# ( \gamma ) = 1$. Consider the set of non-crossing permutations
$\snc (X, \gamma )$, defined as in Equation (\ref{eqn:2.06}). For a
permutation $\tau$ of $X$ we then have the equivalence:
\begin{equation} \label{eqn:2.07}
\tau \in \snc (X, \gamma ) \ \Leftrightarrow \ \left\{
\begin{array}{l}
\mbox{$\tau$ is compatible with $\gamma$, and $\tau$ does not}  \\
\mbox{have the crossing pattern (DC) with respect to $\gamma$.}
\end{array}  \right.
\end{equation}
\end{prop}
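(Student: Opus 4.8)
The plan is to work directly with the quantity $\#(\tau)+\#(\tau^{-1}\gamma)$. Since $\#(\gamma)=1$ forces $\#(\tau,\gamma)=1$, the genus formula (\ref{eqn:2.03}) reads $\#(\tau)+\#(\tau^{-1}\gamma)=|X|+1-2g$ with $g\ge 0$, so that $\tau\in\snc(X,\gamma)$ means precisely $g=0$, i.e. $\tau$ attains the maximum in (\ref{eqn:2.05}). First I would record an elementary reformulation of the right-hand side of (\ref{eqn:2.07}): \emph{$\tau$ has no $(DC)$ pattern with respect to $\gamma$ if and only if the orbit partition $\cO(\tau)$ is non-crossing with respect to the cyclic order determined by $\gamma$.} Indeed, if two distinct orbits $A_1,A_2$ cross, one finds $a,c\in A_1$ and $b,d\in A_2$ in cyclic $\gamma$-order $a,b,c,d$; since $b,d\notin A_1$ and $a,c\notin A_2$, the induced permutation is exactly $\tau\induc\{a,b,c,d\}=(a,c)(b,d)$. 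Conversely a $(DC)$ pattern forces $a\sim_\tau c$ and $b\sim_\tau d$ in two \emph{distinct} orbits (they cannot lie in one orbit, else the induced permutation would be a single $4$-cycle). This identification uses no compatibility, and reduces the claim to: $g=0$ if and only if $\tau$ is compatible with $\gamma$ and $\cO(\tau)$ is non-crossing.

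For the implication ``$\Leftarrow$'' I would argue by induction on $|X|$, peeling off an interval block. Assume $\tau$ is compatible with $\gamma$ and $\cO(\tau)$ is non-crossing. A non-crossing partition always has a block $B=\{x_i,x_{i+1},\dots,x_j\}$ of cyclically consecutive points; by compatibility this orbit is cycled by $\tau$ in $\gamma$-order, so $\tau\induc B=\gamma\induc B=(x_i,\dots,x_j)$. A direct check then shows that $x_i,\dots,x_{j-1}$ become fixed points of $\tau^{-1}\gamma$, while on $X':=X\setminus\{x_i,\dots,x_{j-1}\}$ the permutation $\tau^{-1}\gamma$ agrees with $(\tau')^{-1}\gamma'$, where $\gamma':=\gamma\induc X'$ is a shorter cycle and $\tau':=\tau\induc X'$ merely replaces $B$ by the singleton $\{x_j\}$. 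Hence $\#(\tau)=\#(\tau')$ and $\#(\tau^{-1}\gamma)=\#((\tau')^{-1}\gamma')+(|B|-1)$; since $\tau'$ is again compatible and non-crossing, the induction hypothesis gives $\#(\tau')+\#((\tau')^{-1}\gamma')=|X'|+1$, and adding $|B|-1$ yields $\#(\tau)+\#(\tau^{-1}\gamma)=|X|+1$, i.e. $g=0$.

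For the implication ``$\Rightarrow$'' I would prove the contrapositive, exhibiting a genus obstruction in each way goodness can fail. If some orbit $A$ of $\tau$ is not cycled in $\gamma$-order, then on $A$ the pair $(\tau\induc A,\ \gamma\induc A)$ consists of two unequal $|A|$-cycles, so $(\tau\induc A)^{-1}(\gamma\induc A)\ne\mathrm{id}$ and (\ref{eqn:2.03}) applied on $A$, where $\gamma\induc A$ is a single cycle, gives local genus $\ge 1$. If instead $\tau$ is compatible but $\cO(\tau)$ has a crossing pair $A_1,A_2$, the same computation on $Y=A_1\cup A_2$ (with $\gamma\induc Y$ a single cycle and $\tau\induc Y$ having two orbits) again produces local genus $\ge 1$. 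In both cases I would invoke a monotonicity lemma: for any subset $Y$ that is a union of orbits of $\tau$, the genus of the restricted pair $(\tau\induc Y,\gamma\induc Y)$ is at most the global genus $g$; this forces $g\ge 1$, hence $\tau\notin\snc(X,\gamma)$.

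The main obstacle is precisely this last monotonicity lemma: making rigorous, uniformly inside the possibly complicated global picture, the intuitive fact that a crossing (or an order-reversal inside an orbit) is an irremovable contribution to the genus. I expect to establish it either surface-topologically (the restricted pair's surface is obtained from the global one without raising genus) or, avoiding topology, by replacing the contrapositive with an inductive walk up the graded interval $\{\tau:\tau\le\gamma\}$: starting from the identity and adjoining one transposition at a time, one checks that the only admissible moves keeping $\#(\sigma)+\#(\sigma^{-1}\gamma)$ maximal are those merging two orbits across a shared orbit of $\sigma^{-1}\gamma$, and that every such merge respects the cyclic order and preserves non-crossing, showing directly that every $\tau\le\gamma$ is compatible with non-crossing orbit partition.
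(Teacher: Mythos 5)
Your reduction of the no-(DC) condition to ``$\cO (\tau)$ is non-crossing with respect to the cyclic order of $\gamma$'' is correct, and your ``$\Leftarrow$'' induction is essentially sound, but note that it stalls exactly when the interval block $B$ is a singleton: then $|B|-1=0$ points get removed, $X'=X$, and nothing decreases. This case cannot be sidestepped, since there are compatible non-crossing permutations whose orbit partitions have \emph{only} singleton interval blocks (e.g.\ $\tau =(1,3)$ with $\gamma =(1,2,3,4)$, orbits $\{1,3\} , \{2\} , \{4\}$). The repair is easy --- delete a singleton block's point outright, observing that then $\# ( \tau )$ drops by $1$ while $\# ( \tau^{-1} \gamma )$ is unchanged, because a fixed point of $\tau$ is never a fixed point of $\tau^{-1}\gamma$ --- but as written the induction does not go through.

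The genuine gap is the one you flag yourself: the monotonicity lemma (that the genus of $( \tau \induc Y , \gamma \induc Y )$ is at most the genus of $( \tau , \gamma )$ whenever $Y$ is a union of orbits of $\tau$) is used twice in the ``$\Rightarrow$'' direction and never proved, so that direction is simply not established. Be aware that this lemma is not a routine cycle count: the identity that would make it one, namely $( \tau \induc Y )^{-1} ( \gamma \induc Y ) = ( \tau^{-1} \gamma ) \induc Y$, is false in general --- for $\gamma = (1,2,3,4)$, $\tau = (1,3)(2,4)$, $Y = \{1,3\}$ the left-hand side is the identity of $\{ 1,3 \}$ while the right-hand side is the transposition $(1,3)$ --- so any honest proof needs either the surface/Euler-characteristic argument or a separate combinatorial induction. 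Moreover, even granting monotonicity, your crossing case is not ``the same computation'' as the incompatible-orbit case: you must show directly that two compatible crossing orbits on their union $Y$ force $\# ( \tau \induc Y ) + \# ( ( \tau \induc Y )^{-1} ( \gamma \induc Y ) ) \leq |Y| - 1$, and you cannot obtain this by invoking ``genus $0$ implies non-crossing,'' which is precisely the implication being proved. For what it is worth, the paper does not prove this proposition either; it refers to \cite{B97} and \cite{Br01}, where the argument is close to the fallback you sketch in your last sentence --- walking up the interval $\{ \sigma \mid \sigma \leq \gamma \}$ one transposition at a time and analyzing which multiplications preserve $\# ( \sigma ) + \# ( \sigma^{-1} \gamma ) = |X| + 1$ --- carried out in full; completing that walk is the cleanest way to close your gap.
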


For a proof of Proposition \ref{prop:2.5}, see e.g. Section 1.3 of
\cite{B97}, or Section 2 of \cite{Br01}.

The initials ``DC'' in Definition \ref{def:2.4} stand for
``Disc-Crossing''. This is in relation to the fact that in order to
draw permutations in $\cS_{nc} (X, \gamma)$, one starts by
representing the elements of $X$ as points on the boundary of a
disc, in the cyclic order indicated by $\gamma$, and then the cycles
of $\tau$ are represented by drawing contours inside that disc. An
illustration of how this goes is shown in Figure 2 below.

\begin{center}
\scalebox{.32}{\includegraphics{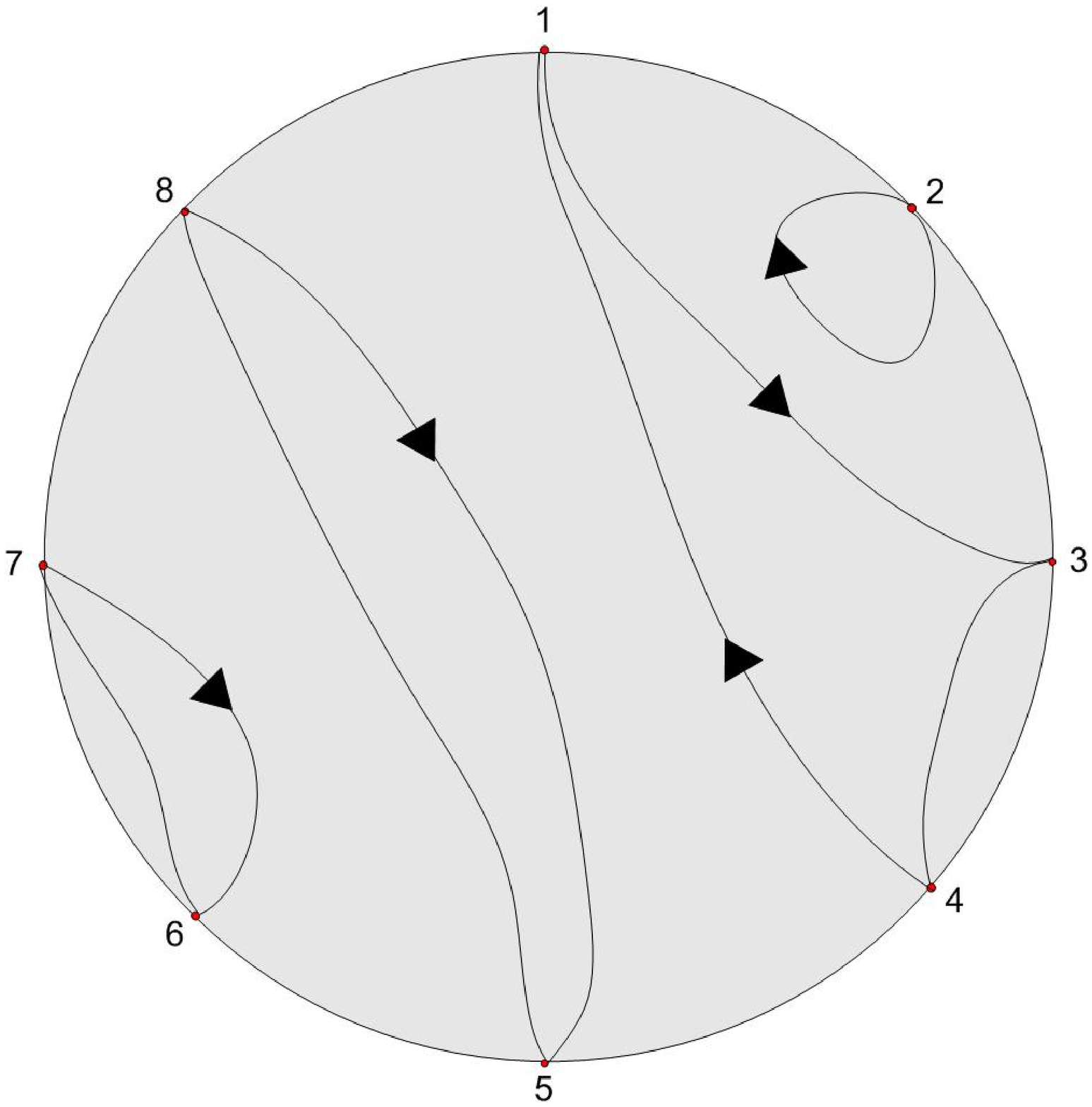}}

{\bf Figure 2.} An example of a non-crossing permutation in the
disc:

$X= \{ 1, \ldots , 8 \}, \ \gamma = (1,2, \ldots , 8), \ \tau =
(1,3,4)(2)(5,8)(6,7) \in \cS_{nc} ( X, \gamma )$.
\end{center}

\vspace{10pt}

\subsection{ \boldmath{$\snc (X, \gamma )$} in the case when
\boldmath{$\# ( \gamma ) = 2$} }

In this subsection we fix a finite set $X$ and a permutation $\gamma
\in \cS (X)$ such that $\# ( \gamma ) =2$. The two orbits of
$\gamma$ will be denoted by $Y$ and $Z$. In order to spell out what
$\snc (X, \gamma )$ is in this case, it will be convenient to use
the following defininition.

\begin{defn}  \label{definition2.11}
A subset $A \subseteq X$ such that $A \cap Y \neq \emptyset \neq A
\cap Z$ will be said to be $\gamma$-{\bf connected}. A partition
$\pi \in \cP  (X)$ will be said to be $\gamma$-{\bf connected} when
it has at least one $\gamma$-connected block, and will be said to be
$\gamma$-{\bf disconnected} in the opposite case. Finally, a
permutation $\tau \in \cS (X)$ will be said to be $\gamma$-{\bf
connected} (respectively $\gamma$-{\bf disconnected}) when the orbit
partition $\cO ( \tau )$ is so.
\end{defn}

It is clear that for $\tau \in \cS (X)$ we have
\[
\# ( \tau , \gamma ) = \left\{
\begin{array}{ccl}
1 & \mbox{ if } & \mbox{$\tau$ is $\gamma$-connected} \\
2 & \mbox{ if } & \mbox{$\tau$ is $\gamma$-disconnected.}
\end{array}  \right.
\]
The inequality provided by the genus formula thus splits in two
cases:
\begin{equation} \label{eqn:2.08}
\Bigl( \tau \in \cS (X), \mbox{$\gamma$-connected} \Bigr)
\Rightarrow \# ( \tau ) + \# ( \tau^{-1} \gamma ) \leq |X|,
\end{equation}
and
\begin{equation} \label{eqn:2.09}
\Bigl( \tau \in \cS (X), \mbox{$\gamma$-disconnected} \Bigr)
\Rightarrow \# ( \tau ) + \# ( \tau^{-1} \gamma ) \leq |X|+2.
\end{equation}
So the definition made for $\snc (X, \gamma )$ in Definition
\ref{def:2.3} takes here the following form:
\[
\snc (X, \gamma ) = \bigl\{ \tau \in \cS (X) \mid \mbox{$\tau$ is
$\gamma$-connected and } \# ( \tau ) + \# ( \tau^{-1} \gamma ) = |X|
\bigr\}
\]
\begin{equation}  \label{eqn:2.10}
\cup \, \bigl\{ \tau \in \cS (X) \mid \mbox{$\tau$ is
$\gamma$-disconnected and } \# ( \tau ) + \# ( \tau^{-1} \gamma ) =
|X|+2 \bigr\} .
\end{equation}

We next state the counterparts of Definition \ref{def:2.4} and of
Proposition \ref{prop:2.5} from the preceding subsection. Instead of
the crossing pattern (DC) from Definition \ref{def:2.4} we will now
have some ``annular'' crossing patterns (AC-1), (AC-2), (AC-3). In
order to describe them, it is useful to introduce the following
notation.

\begin{notation}  \label{def:2.7}
For every $y \in Y$ and $z \in Z$ we will denote by $\lambda_{y,z}$
the permutation of $X$ which fixes $y$ and $z$, and organizes $X
\setminus \{ y,z \}$ in a cycle in the following way:
\begin{equation} \label{eqn:2.11}
\lambda_{y,z} = ( \ \gamma (y), \gamma^2 (y), \ldots ,
\gamma^{|Y|-1} (y), \gamma (z), \gamma^2 (z), \ldots ,
\gamma^{|Z|-1} (z) \ ).
\end{equation}
The permutations $\lambda_{y,z}$ will be called in what follows {\bf
AC-test permutations} (because they are used in the annular crossing
patterns (AC-2) and (AC-3) from the next definition).
\end{notation}

\begin{defn}   \label{def:2.8}
$1^o$ We will say that a permutation $\tau \in \cS (X)$ is {\bf
compatible with} $\gamma$ if for every orbit $A$ of $\tau$ the
following two conditions are satisfied:

(i) $\tau \downarrow (A \cap Y) = \gamma \downarrow (A \cap Y)$,
$\tau \downarrow (A \cap Z) = \gamma \downarrow (A \cap Z)$.

(ii) There exists at most one element $a' \in A \cap Y$ such that
$\tau (a') \in Z$, and there exists at most one element $a'' \in A
\cap Z$ such that $\tau (a'') \in Y$.

$2^o$ Let $\tau$ be a permutation of $X$. We define three {\bf
annular crossing patterns} for $\tau$ with respect to $\gamma$, as
follows:

\begin{tabular}{lll}
\vline & {\bf (AC-1)} & There exist four distinct
            elements $a,b,c,d \in X$ such that                       \\
\vline &          & $\gamma \downarrow \{ a,b,c,d \} = (a,b,c,d)$
      and $\tau \downarrow \{ a,b,c,d \} = (a,c)(b,d)$.              \\
\vline &              &                                              \\
\vline & {\bf (AC-2)} & There exist five distinct elements
            $a,b,c,y,z \in X$ such that $y \in Y$, $z \in Z$,        \\
\vline &      & $\lambda_{y,z} \downarrow \{ a,b,c \} = (a,b,c)$
      and $\tau \downarrow \{ a,b,c,y,z \} = (a,c,b)(y,z)$.          \\
\vline &              &                                              \\
\vline & {\bf (AC-3)} & There exist six distinct elements
            $a,b,c,d,y,z \in X$ such that $y \in Y$, $z \in Z$,      \\
\vline &    & $\lambda_{y,z} \downarrow \{ a,b,c,d \} = (a,b,c,d)$
     and $\tau \downarrow \{ a,b,c,d,y,z \} = (a,c)(b,d)(y,z)$.
\end{tabular}
\end{defn}

\vspace{6pt}

\begin{prop}   \label{prop:2.9}
Consider the set of annular non-crossing permutations $\snc (X,
\gamma )$, as in Definition \ref{def:2.3}. For a permutation $\tau$
of $X$ we have the equivalence:
\begin{equation}\label{eqn:2.12}
\tau \in \snc (X, \gamma ) \ \Leftrightarrow \ \left\{
\begin{array}{l}
\mbox{$\tau$ is compatible with $\gamma$, and $\tau$ does not
                                                     satisfy}  \\
\mbox{any of the crossing patterns (AC-1), }                   \\
\mbox{(AC-2), (AC-3) with respect to $\gamma$.}
\end{array}  \right.
\end{equation}
\end{prop}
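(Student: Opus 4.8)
The plan is to separate the two cases distinguished by the genus inequalities (\ref{eqn:2.08}) and (\ref{eqn:2.09}), according to whether $\tau$ is $\gamma$-connected or $\gamma$-disconnected, and to reduce each of them to the disc result of Proposition~\ref{prop:2.5}. The disconnected case is routine; essentially all the work is in the connected case, where the idea is to cut the annulus open into a disc along a strand of $\tau$ joining the two circles.

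First I would dispose of the $\gamma$-disconnected case. Here every orbit of $\tau$ lies inside $Y$ or inside $Z$, so $\tau$ leaves both orbits of $\gamma$ invariant and factors as $\tau = \alpha \beta$ with $\alpha := \tau|_Y \in \cS(Y)$ and $\beta := \tau|_Z \in \cS(Z)$; write correspondingly $\gamma = \gamma_Y \gamma_Z$, where $\gamma_Y, \gamma_Z$ are single cycles. Then $\#(\tau) = \#(\alpha) + \#(\beta)$ and $\#(\tau^{-1}\gamma) = \#(\alpha^{-1}\gamma_Y) + \#(\beta^{-1}\gamma_Z)$, so the defining equality $\#(\tau) + \#(\tau^{-1}\gamma) = |X|+2$ of the disconnected branch of (\ref{eqn:2.10}) splits, via the two disc inequalities (\ref{eqn:2.05}) summing to $|X|+2$, into the simultaneous equalities $\#(\alpha) + \#(\alpha^{-1}\gamma_Y) = |Y|+1$ and $\#(\beta) + \#(\beta^{-1}\gamma_Z) = |Z|+1$. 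Thus $\tau \in \snc(X,\gamma)$ if and only if $\alpha \in \snc(Y,\gamma_Y)$ and $\beta \in \snc(Z,\gamma_Z)$, and Proposition~\ref{prop:2.5} applies on each circle. It then remains to match the conditions: since $\gamma \downarrow \{a,b,c,d\}$ is a $4$-cycle only when $\{a,b,c,d\}$ lies inside a single $\gamma$-orbit, every (AC-1) pattern of a disconnected $\tau$ sits inside $Y$ or inside $Z$ and is exactly a (DC) pattern for $\alpha$ or $\beta$, and conversely; condition (ii) of Definition~\ref{def:2.8} is vacuous for disconnected $\tau$, so compatibility with $\gamma$ reduces to disc-compatibility of $\alpha$ and of $\beta$; and the patterns (AC-2), (AC-3) cannot occur at all, since each requires a single $\tau$-orbit containing both a $y \in Y$ and a $z \in Z$.

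The substance of the proof is the $\gamma$-connected case, where $\snc(X,\gamma)$ is cut out by the equality $\#(\tau) + \#(\tau^{-1}\gamma) = |X|$. I would first show that compatibility in the sense of Definition~\ref{def:2.8} is forced whenever $g=0$: a failure of (i) makes the order along some orbit disagree with $\gamma$ and strictly raises the genus, while a second $Y\to Z$ (or second $Z\to Y$) transition inside one orbit, forbidden by (ii), forces that orbit to wind around the annulus and again raises the genus. Granting compatibility, the connectedness of $\tau$ furnishes $y \in Y$ and $z \in Z$ lying in a common orbit with $\tau(y) \in Z$; the AC-test permutation $\lambda_{y,z}$ of Notation~\ref{def:2.7} is, by construction, the single cyclic order on $X \setminus \{y,z\}$ obtained by unrolling the annulus cut at $y$ and at $z$, and it restricts to a single cycle there. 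I would then prove a genus-comparison lemma: cutting along $y,z$ neither creates nor destroys genus, so that $\tau \in \snc(X,\gamma)$ holds if and only if the induced permutation $\tau \downarrow (X \setminus \{y,z\})$ lies in $\snc(X \setminus \{y,z\}, \lambda_{y,z})$, the latter being a disc non-crossing problem (its reference has a single cycle) governed by Proposition~\ref{prop:2.5}.

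Finally I would translate the disc characterization back. By Proposition~\ref{prop:2.5}, the unrolled picture $\tau \downarrow (X \setminus \{y,z\})$ is non-crossing precisely when it is compatible with $\lambda_{y,z}$ and free of (DC) patterns with respect to $\lambda_{y,z}$; re-expressing each such (DC) quadruple in terms of the original $\gamma$ and $\tau$ produces exactly the list (AC-1), (AC-2), (AC-3) --- a (DC) quadruple disjoint from the seam reappears as an (AC-1) pattern, while a (DC) quadruple interacting with the cut reappears as (AC-2) or (AC-3) (these being the two ways a crossing can straddle the seam, recorded by the five- and six-element test configurations of Definition~\ref{def:2.8} that explicitly invoke $\lambda_{y,z}$, $y$ and $z$). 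I expect the crux of the whole argument to lie in this connected case: establishing the genus-comparison lemma and carrying out the dictionary between (DC) patterns of the unrolled disc and the three annular patterns, while checking that the resulting membership criterion is independent of the particular connecting orbit and of the representatives $y,z$ chosen for the cut.
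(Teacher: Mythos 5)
The paper itself contains no proof of Proposition \ref{prop:2.9}: it refers to Section 6 of \cite{MN04}, where the right-hand side of (\ref{eqn:2.12}) is taken as the \emph{definition} of $\snc (X, \gamma )$ and the genus-formula description is what gets proved. So your proposal has to be judged on its own terms. Your $\gamma$-disconnected case is correct and complete (the orbit counts split over $Y$ and $Z$, the two disc inequalities force two disc equalities, and the pattern dictionary there is straightforward). The $\gamma$-connected case, however, rests on a ``genus-comparison lemma'' that is false as stated.

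Here is a concrete counterexample. Take $X = \{1,2,3,4,5\}$, $\gamma = (1,2,3,4)(5)$ (so $Y = \{1,2,3,4\}$, $Z = \{5\}$), and $\tau = (1,3)(2,4,5)$. This $\tau$ is $\gamma$-connected, it is compatible with $\gamma$ in the sense of Definition \ref{def:2.8}.1, and it displays (AC-1) on $\{1,2,3,4\}$; accordingly $\# ( \tau ) + \# ( \tau^{-1} \gamma ) = 2 + 1 = 3 \neq 5 = |X|$, so $\tau \notin \snc (X, \gamma )$. The only cut your recipe permits is $y = 4$, $z = \tau (4) = 5$. Then $X \setminus \{y,z\} = \{1,2,3\}$, the permutation $\lambda_{4,5}$ restricts there to the $3$-cycle $(1,2,3)$, and $\tau \downarrow \{1,2,3\} = (1,3)(2)$, which \emph{is} in $\snc ( \{1,2,3\}, (1,2,3))$ (it is compatible and trivially (DC)-free). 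So membership of $\tau \downarrow (X \setminus \{y,z\})$ in $\snc (X \setminus \{y,z\}, \lambda_{y,z})$ does not imply $\tau \in \snc (X, \gamma )$, and your dictionary breaks at exactly the edge case it never addresses: the (AC-1) pattern above contains the cut point $y$ and leaves no (DC) trace after the cut. The underlying error is that deleting $y,z$ and passing to the induced permutation is \emph{not} the combinatorial counterpart of cutting the annulus along the strand from $y$ to $z$: the geometric cut keeps $y$ and $z$ on the seam and keeps the whole contour of the connecting orbit, whereas $\tau \downarrow (X \setminus \{y,z\})$ reconnects $\tau^{-1}(y)$ directly to $\tau (z)$, which can erase crossings --- in the example the orbit $\{2,4,5\}$ collapses to the singleton $\{2\}$. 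Any correct cutting lemma must retain all points of $X$ (for instance, comparing $\tau$ with the full cycle $\gamma \cdot (y,z)$ on $X$); note that Definition \ref{def:2.8} itself always uses $\lambda_{y,z}$ together with $y$ and $z$ as extra test points, never with them deleted. Separately, your claim that the genus equality forces compatibility is argued only by informal pictures, which is circular here, since the correspondence between pictures and the genus formula (\ref{eqn:2.03}) is precisely what is being established.
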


For a proof of Proposition \ref{prop:2.9}, see section 6 of
\cite{MN04}. Note that in \cite{MN04} it is the condition on the
right-hand side of (\ref{eqn:2.12}) which is taken as definition for
$\snc (X, \gamma )$.

The initials ``AC'' in (AC-1), (AC-2), (AC-3) stand for ``Annular
Crossing''. This comes from the fact that in order to draw
permutations in $\snc (X, \gamma )$ one starts by representing the
elements of $X$ as points on the boundary of an annulus. The
convention used in \cite{MN04} is that the elements of $Y$ are
represented on the outer circle of the annulus, clockwise and in the
order indicated by $\gamma \downarrow Y$; and the elements of $Z$
are represented on the inner circle of the annulus, counterclockwise
and in the order indicated by $\gamma \downarrow Z$. In terms of
pictures drawn in this annulus, the fact that a permutation $\tau$
of $X$ belongs to $\snc (X, \gamma )$ corresponds then to the
following. One can draw a closed contour for each of the cycles of
$\tau$, such that

(i) each of the contours does not self-intersect, and goes
clockwisely around the region it encloses;

(ii) the region enclosed by each of the contours is contained in the
annulus;

(iii) regions enclosed by different contours are mutually disjoint.

For an explanation of why the existence of a drawing satisfying
(i)--(iii) corresponds to the algebraic conditions stated on the
right-hand side of the equivalence (\ref{eqn:2.12}), see Remarks 3.8
and 3.9 in \cite{MN04}. An example of how such a drawing looks is
shown in Figure 3.

\begin{center}
\scalebox{.32}{\includegraphics{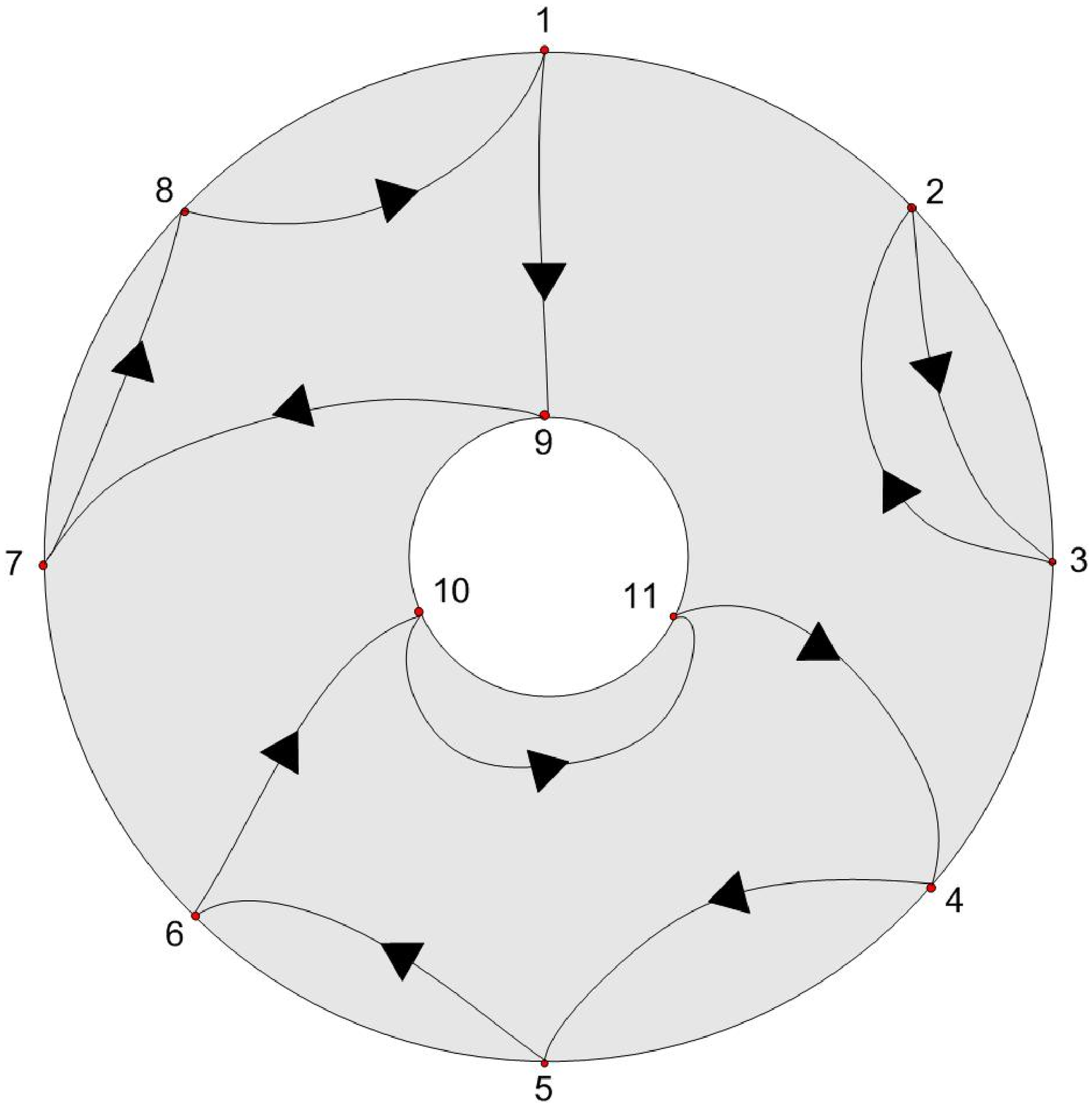}}

{\bf Figure 3.} An example of annular non-crossing permutation: $X=
\{ 1, \ldots , 11 \}$,

$\gamma = (1,2, \ldots , 8)(9,10,11), \ \tau =
(1,9,7,8)(2,3)(4,5,6,10,11) \in \cS_{nc} ( X, \gamma )$.
\end{center}

$\ $

\begin{center}
{\bf\large 3. \boldmath{$\sncb (p,q)$}, and proof of Theorem 1.1}
\end{center}
\setcounter{section}{3} \setcounter{equation}{0} \setcounter{thm}{0}
\setcounter{subsection}{0}

\noindent In this section we fix two positive integers $p$ and $q$.
We denote $p+q =:n$, and we put
\begin{equation}  \label{eqn:3.01}
X := \{ 1, \ldots , n \} \cup \{ -1, \ldots , -n \} .
\end{equation}
We consider the hyperoctahedral group $B_n = \{ \tau \in \cS (X)
\mid \tau (-i) = - \tau (i)$, $1 \leq i \leq n \}$, and the special
permutation
\begin{equation}  \label{eqn:3.02}
\gamma := \Bigl( 1, \ldots , p, -1, \ldots , -p \Bigr) \Bigl( p+1,
\ldots , n, -(p+1), \ldots , -n \Bigr) \in B_n.
\end{equation}
Following the notations from subsection 2.5, we will denote the
orbits of $\gamma$ by $Y$ and $Z$:
\begin{equation}  \label{eqn:3.03}
\left\{   \begin{array}{l}
Y := \{1, \ldots , p \} \cup \{ -1, \ldots , -p \}            \\
                                                              \\
Z := \{p+1, \ldots , n \} \cup \{ -(p+1), \ldots , -n \} .
\end{array}  \right.
\end{equation}

\begin{defn}  \label{def:3.1}
The set $\sncb (p,q)$ of annular non-crossing permutations of type B
is
\begin{equation}  \label{eqn:3.04}
\sncb (p,q) := \snc (X, \gamma ) \cap B_n,
\end{equation}
where $\snc (X, \gamma )$ is defined as in subsection 2.3 (see also
subsection 2.5).
\end{defn}

Our goal for the section is to prove that (as stated in Theorem
\ref{theorem1}) we have
\begin{equation}   \label{eqn:3.05}
\snc ( X, \gamma ) \cap B_n = \{ \tau \in B_n \mid \tau \leq  \gamma
\} ,
\end{equation}
where the partial order considered on $B_n$ is the one coming from
the length function $\lgb$. We will verify (\ref{eqn:3.05}) by
discussing separately the cases where we deal with
$\gamma$-connected and with $\gamma$-disconnected permutations of
$X$ (in Proposition \ref{prop:3.5} and in Proposition
\ref{prop:3.2}, respectively). We first deal with the
$\gamma$-disconnected case, which is immediately obtained from facts
known in the disc case.

\begin{prop}  \label{prop:3.2}
Consider the permutations induced by $\gamma$ on $Y$ and on $Z$:
\[
\alpha := \gamma \downarrow Y = \Bigl( 1, \ldots ,p,  -1, \ldots ,
-p \Bigr) , \ \ \beta  := \gamma \downarrow Z = \Bigl( p+1, \ldots
,n,  -(p+1), \ldots , -n \Bigr) .
\]

Given a $\gamma$-disconnected permutation $\tau \in B_n$, the
following three statements about $\tau$ are equivalent:

$(1) \ \  \tau \in \snc (X, \gamma )$.

$(2) \ \ \tau \downarrow Y \in \snc (Y, \alpha ) \mbox{ and } \tau
\downarrow Z \in \snc (Z, \beta )$.

$(3) \ \ \tau \leq \gamma$ with respect to the partial order
considered on $B_n$.
\end{prop}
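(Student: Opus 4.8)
The plan is to exploit the fact that a $\gamma$-disconnected $\tau$ splits $X = Y \sqcup Z$ into two pieces on which everything reduces to the already-known disc theory of Framework I. The first thing I would record is that, because no orbit of $\tau$ meets both $Y$ and $Z$, the permutation $\tau$ leaves each of $Y$ and $Z$ invariant; since $Y$ and $Z$ are themselves inversion-invariant, the induced permutations $\tau \downarrow Y$ and $\tau \downarrow Z$ are genuine restrictions and lie in the hyperoctahedral groups $B_p$ (acting on $Y$) and $B_q$ (acting on $Z$). The reference permutation decomposes the same way, $\gamma = \alpha \beta$ with $\alpha = \gamma \downarrow Y$, $\beta = \gamma \downarrow Z$, and since $\gamma$ also preserves $Y$ and $Z$ the product $\tau^{-1}\gamma$ is again $\gamma$-disconnected, with $(\tau^{-1}\gamma)\downarrow Y = (\tau\downarrow Y)^{-1}\alpha$ and $(\tau^{-1}\gamma)\downarrow Z = (\tau\downarrow Z)^{-1}\beta$. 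Note that $\alpha$ and $\beta$ are precisely the full type-B cycles $\gamma_o$ for $B_p$ and $B_q$, so $(Y,\alpha)$ and $(Z,\beta)$ are literally the disc setups of Framework I.

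For $(1) \Leftrightarrow (2)$ I would use that both the orbit-count $\#(\cdot)$ and the cardinality are additive across this invariant decomposition, so that $\#(\tau) + \#(\tau^{-1}\gamma) = [\#(\tau\downarrow Y) + \#((\tau\downarrow Y)^{-1}\alpha)] + [\#(\tau\downarrow Z) + \#((\tau\downarrow Z)^{-1}\beta)]$ while $|X|+2 = (|Y|+1)+(|Z|+1)$. By the disc genus inequality (\ref{eqn:2.05}) each bracket on the right is at most $|Y|+1$, respectively $|Z|+1$. Hence the annular-disconnected equality $\#(\tau)+\#(\tau^{-1}\gamma)=|X|+2$ from (\ref{eqn:2.10}) holds if and only if both brackets attain their maxima, which by (\ref{eqn:2.06}) is exactly the statement that $\tau\downarrow Y \in \snc(Y,\alpha)$ and $\tau\downarrow Z \in \snc(Z,\beta)$.

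For $(2) \Leftrightarrow (3)$ I would first translate (2) into the factor groups via the disc theorem recalled in (\ref{eqn:1.07}) (applied with $p$, resp.\ $q$, in the role of $n$): since $\alpha,\beta$ are the full cycles, $\tau\downarrow Y \in \snc(Y,\alpha)\cap B_p \Leftrightarrow \tau\downarrow Y \le \alpha$ in $B_p$, and likewise for $Z$. It then remains to show that $\tau \le \gamma$ in $B_n$ if and only if $\tau\downarrow Y \le \alpha$ and $\tau\downarrow Z \le \beta$. The engine is additivity of the type-B length: by formula (\ref{eqn:2.02}) the length equals $n$ minus the number of non-inversion-invariant orbit pairs, and this pair-count is additive over $Y \sqcup Z$, so $\lgb(\tau) = \lgb(\tau\downarrow Y) + \lgb(\tau\downarrow Z)$, and similarly $\lgb(\tau^{-1}\gamma)$ and $\lgb(\gamma) = \lgb(\alpha)+\lgb(\beta)$ split. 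Subtracting, the defining equality $\lgb(\gamma) = \lgb(\tau) + \lgb(\tau^{-1}\gamma)$ of $\tau \le \gamma$ becomes $[\lgb(\alpha) - \lgb(\tau\downarrow Y) - \lgb((\tau\downarrow Y)^{-1}\alpha)] + [\lgb(\beta) - \lgb(\tau\downarrow Z) - \lgb((\tau\downarrow Z)^{-1}\beta)] = 0$; since $\lgb$ is a word length for the symmetric generating set (\ref{eqn:2.01}) it obeys the triangle inequality, so each bracket is $\le 0$ and the sum vanishes exactly when both do, i.e.\ when $\tau\downarrow Y \le \alpha$ and $\tau\downarrow Z \le \beta$.

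The crux is that both equivalences run on the same mechanism: a quantity additive over $Y \sqcup Z$ and bounded on each factor attains equality if and only if each factor is extremal, with the disc facts supplying the per-factor translation. Accordingly, I expect the main obstacle to be not any single deep step but the careful verification of the additivity underpinning everything—that $\tau\downarrow Y$ and $\tau\downarrow Z$ are honest restrictions belonging to $B_p$ and $B_q$, that $\tau^{-1}\gamma$ remains $\gamma$-disconnected and restricts multiplicatively, and that the orbit-pair count behind (\ref{eqn:2.02}) genuinely splits—after which the genus inequality and the length triangle inequality close the argument against the known disc results.
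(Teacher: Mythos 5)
Your proof is correct, and at the skeleton level it follows the same route as the paper: split everything over the two invariant sets $Y$ and $Z$, and reduce both equivalences to the known disc-case facts of type B. The difference is in how much gets proved rather than quoted. The paper handles $(1) \Leftrightarrow (2)$ by citing Remark 3.8 of \cite{MN04}, and handles the equivalence of $(3)$ with the intermediate statement $(3')$ (that $\tau \downarrow Y \leq \alpha$ in $B_Y$ and $\tau \downarrow Z \leq \beta$ in $B_Z$) by declaring it immediately verified from definitions; only $(3') \Leftrightarrow (2)$ is obtained, as in your write-up, from the disc theorem (\ref{eqn:1.07}). You instead supply proofs of both outsourced steps, and by one unified mechanism: a quantity additive over $Y \sqcup Z$ (the orbit count $\# ( \tau ) + \# ( \tau^{-1} \gamma )$ in one case, the length $\lgb$ via formula (\ref{eqn:2.02}) in the other) which is bounded factorwise (by the disc genus inequality (\ref{eqn:2.05}), respectively by the triangle inequality for word length), so that the global equality characterizing membership in $\snc (X, \gamma )$, respectively the relation $\tau \leq \gamma$, holds exactly when each factor is extremal. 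Your supporting checks --- that a $\gamma$-disconnected $\tau \in B_n$ honestly restricts to elements of $B_Y$ and $B_Z$, that $\tau^{-1} \gamma$ is again $\gamma$-disconnected with multiplicative restrictions, and that the orbit-pair count behind (\ref{eqn:2.02}) splits over $Y$ and $Z$ --- are precisely what legitimizes the additivity, and they all hold. What your version buys is a proof that is self-contained modulo Section 2 of the paper and the disc result (\ref{eqn:1.07}); what the paper's version buys by its citation and its ``immediate'' claim is brevity.
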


\begin{proof}
The equivalence $(1) \Leftrightarrow (2)$ is proved in Remark 3.8 of
\cite{MN04}. For $(2) \Leftrightarrow (3)$, let $B_Y$ and $B_Z$
denote the Weyl groups of type B defined on $Y$ and respectively on
$Z$; that is, $B_Y$ consists of the permutations $\tau \in \cS (Y)$
which satisfy the condition $\tau (-i) = - \tau (i)$, $\forall \, i
\in Y$, and similarly for $B_Z$. Each of the groups $B_Y$ and $B_Z$
has a length function $\lgb$ on it, and a partial order defined by
starting from $\lgb$ (by the same recipe that was used to define the
partial order of $B_n$). It is immediately verified, directly from
definitions, that statement $(3)$ is equivalent to

\vspace{6pt}

$(3') \ \ \bigl( \tau \downarrow Y \bigr) \leq \alpha \mbox{ in }
B_Y, \mbox{ and } \bigl( \tau \downarrow Z \bigr)  \leq \beta \mbox{
in } B_Z.$

\vspace{6pt}

But $(3')$ is in turn equivalent to $(2)$, due to the result from
the disc case that was quoted in Equation (\ref{eqn:1.07}) of Remark
\ref{rem:1.6}.
\end{proof}

We now take on the $\gamma$-connected case. Here it comes in handy
to first record that a $\gamma$-connected permutation in $\sncb
(p,q)$ can never have inversion-invariant orbits. This fact can be
proved as follows.

\begin{lemma}  \label{lemma:3.3}
Let $\tau$ be a permutation in $\sncb (p,q)$. Then $\tau$ cannot
have a $\gamma$-connected orbit which is inversion-invariant.
\end{lemma}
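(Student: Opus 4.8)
The plan is to argue by contradiction, exploiting the compatibility of $\tau$ with $\gamma$ that is guaranteed by Proposition \ref{prop:2.9}. Suppose $\tau \in \sncb (p,q)$ has an orbit $A$ that is both $\gamma$-connected and inversion-invariant. Since $\sncb (p,q) = \snc (X, \gamma) \cap B_n$, Proposition \ref{prop:2.9} tells us that $\tau$ is compatible with $\gamma$ in the sense of Definition \ref{def:2.8}; the clause I intend to use is condition (ii), which asserts that $A$ contains \emph{at most one} element $a' \in A \cap Y$ with $\tau (a') \in Z$.

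First I would record two structural facts about $A$. On the one hand, because $A$ is $\gamma$-connected it meets both $Y$ and $Z$, and since $A$ is a single cycle of $\tau$ the set $A \cap Y$ cannot be $\tau$-invariant (otherwise $A \cap Z$ would be $\tau$-invariant as well, splitting the orbit $A$); hence there is \emph{at least one} element $a' \in A \cap Y$ with $\tau (a') \in Z$. On the other hand, because $Y$ and $Z$ are each inversion-invariant ($-Y = Y$, $-Z = Z$) and $A = -A$ by assumption, the sets $A \cap Y$ and $A \cap Z$ are themselves inversion-invariant. In particular $-a' \in A \cap Y$.

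The contradiction then comes from combining these with the defining symmetry $\tau (-i) = - \tau (i)$ of $B_n$: since $\tau (a') \in Z$ and $-Z = Z$, we get $\tau (-a') = - \tau (a') \in Z$, so $-a'$ is a \emph{second} element of $A \cap Y$ mapped by $\tau$ into $Z$. The uniqueness in compatibility condition (ii) forces $a' = -a'$, which is impossible for an element of $X$. This proves the lemma. The only point requiring care is the existence step --- confirming that a $\gamma$-connected single cycle really does furnish at least one outer-to-inner crossing element $a'$ --- after which the inversion symmetry manufactures the forbidden second crossing essentially for free; so I do not expect a serious obstacle here.
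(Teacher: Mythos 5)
Your proof is correct and takes essentially the same route as the paper's: both argue by contradiction, produce an element $a'$ of $A \cap Y$ sent by $\tau$ into $Z$ (using that a $\gamma$-connected orbit must cross from $Y$ to $Z$), and then use the $B_n$-symmetry $\tau(-a') = -\tau(a')$ together with $A = -A$ to exhibit a second such element $-a'$, violating condition (ii) of compatibility from Definition \ref{def:2.8}. The only difference is cosmetic: you spell out the existence step and the inversion-invariance of $A \cap Y$ in more detail than the paper does.
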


\begin{proof} Assume for contradiction that $\tau$ has such an
orbit $A$. Since $A$ is $\gamma$-connected, we can find elements $i
\in A \cap Y$ and $j \in A \cap Z$ such that $\tau (i) = j$. But
then $-i$ also belongs to $A \cap Y$, and has $\tau (-i) = -j \in A
\cap Z$; so we see that $\tau$ does not satisfy the condition (ii)
in Definition \ref{def:2.8}.1 -- contradiction.
\end{proof}

\begin{prop} \label{prop:3.4}
 Let $\tau$ be a $\gamma$-connected permutation in $\sncb
(p,q)$. Then $\tau$ has no inversion-invariant orbits.
\end{prop}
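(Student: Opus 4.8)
The plan is to argue by contradiction, producing the crossing pattern (AC-3) of Definition \ref{def:2.8}. Suppose $\tau \in \sncb (p,q)$ is $\gamma$-connected but has an inversion-invariant orbit $A_0$. By Lemma \ref{lemma:3.3} the orbit $A_0$ cannot be $\gamma$-connected, so it lies entirely inside $Y$ or entirely inside $Z$; after possibly interchanging the roles of $Y$ and $Z$, I may assume $A_0 \subseteq Y$. Since $\tau$ is $\gamma$-connected it has a $\gamma$-connected orbit $B$, and then $-B$ is $\gamma$-connected as well. Crucially, Lemma \ref{lemma:3.3} guarantees that $B$ is \emph{not} inversion-invariant, so $B$ and $-B$ are two \emph{distinct} orbits of $\tau$; note that $A_0$, $B$, $-B$ are then pairwise disjoint.

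Next I would pin down how $B$ passes between the two circles. Because $\tau$ is compatible with $\gamma$, condition (ii) of Definition \ref{def:2.8} forces $B$ to contain exactly one element $y_1 \in B \cap Y$ with $\tau (y_1) \in Z$; set $z_1 := \tau (y_1) \in Z$. Applying the inversion symmetry, $-y_1 \in (-B) \cap Y$ and $\tau (-y_1) = -z_1 \in Z$. I take these as four of the six points of the pattern: $y := y_1$, $z := z_1$, $b := -y_1$, $d := -z_1$. Using only that $A_0$, $B$, $-B$ are distinct orbits, a short check of induced permutations (Definition \ref{def:2.1}) gives $\tau \downarrow \{ y,z \} = (y,z)$ and $\tau \downarrow \{ b,d \} = (b,d)$, with no interaction between the orbits.

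The heart of the argument, which I expect to be the main obstacle, is the choice of the remaining two points $a,c \in A_0$. I want $\lambda_{y_1,z_1} \downarrow \{ a,b,c,d \} = (a,b,c,d)$, i.e. the four points should interleave in the cyclic order of the AC-test permutation $\lambda_{y_1,z_1}$ of Notation \ref{def:2.7}. Cutting the annulus open at $y_1$ and $z_1$ lists the points of $Y \setminus \{ y_1 \}$ first, in the order of $\alpha = \gamma \downarrow Y$, and then the points of $Z \setminus \{ z_1 \}$; among our four points, $a,c,-y_1$ lie in the $Y$-block and $-z_1$ is the only one in the $Z$-block. So the required interleaving reduces to finding $a,c \in A_0$ with $-y_1$ strictly between them in the linear order on $Y \setminus \{ y_1 \}$. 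This is exactly where inversion-invariance of $A_0$ is used: the map $x \mapsto -x$ is the half-turn $\alpha^p$ of the $2p$-cycle $\alpha$ (so $-y_1 = \alpha^p(y_1)$), it has no fixed points, and it interchanges the two halves of $Y \setminus \{ y_1 \}$ that $-y_1$ separates. Since $A_0 = -A_0$ is nonempty and avoids both $y_1$ and $-y_1$, it must meet each of these two halves; taking $a$ in the first half and $c$ in the second places $-y_1$ strictly between $a$ and $c$, as wanted.

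With $a,c$ so chosen, disjointness of $A_0$, $B$, $-B$ yields $\tau \downarrow \{ a,b,c,d,y,z \} = (a,c)(b,d)(y,z)$, while the cut-open description gives $\lambda_{y_1,z_1} \downarrow \{ a,b,c,d \} = (a,-y_1,c,-z_1) = (a,b,c,d)$. These are precisely the equations defining pattern (AC-3), so Proposition \ref{prop:2.9} forces $\tau \notin \snc (X,\gamma)$, contradicting $\tau \in \sncb (p,q)$. Hence $\tau$ has no inversion-invariant orbit. The only genuinely nontrivial steps are the existence of the single crossing edge $y_1 \to z_1$ (from compatibility) and of the straddling pair $a,c$ (from inversion-invariance); the two induced-permutation identities are then routine bookkeeping, made available precisely by the distinctness of $B$ and $-B$ supplied by Lemma \ref{lemma:3.3}.
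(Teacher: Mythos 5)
Your proof is correct and takes essentially the same approach as the paper: both derive a contradiction by exhibiting the crossing pattern (AC-3) on six points consisting of a pair $y_1,\tau(y_1)=z_1$ crossing between the circles inside a $\gamma$-connected orbit, their negatives $-y_1,-z_1$ (lying in the distinct orbit $-B$, thanks to Lemma \ref{lemma:3.3}), and two points of the inversion-invariant orbit. The only difference is expository: the paper simply asserts that these six elements display (AC-3), whereas you spell out the interleaving verification via the half-turn $x\mapsto -x$ on $Y\setminus\{y_1\}$, which is exactly the detail the paper leaves to the reader.
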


\begin{proof}
By hypothesis, $\tau$ has a $\gamma$-connected orbit $C$. Let us fix
two elements $i,j \in C$ such that $i \in Y$, $j \in Z$, and $\tau
(i) =j$.

The preceding lemma implies that the orbit $-C$ of $\tau$ is
distinct from $C$. Note that we have $-i \in (-C) \cap Y$, $-j \in
(-C) \cap Z$, and $\tau (-i) = -j$.

Assume for contradiction that $\tau$ has an inversion-orbit $A$, and
let $k$ be an element of $A$. By looking at the six elements
$i,j,-i,-j,k,-k$ we see that $\tau$ satisfies the crossing pattern
(AC-3), contradiction.
\end{proof}

\begin{prop}  \label{prop:3.5}
Let $\tau \in B_n$ be a $\gamma$-connected permutation. Then we have
\begin{equation}  \label{eqn:3.06}
\tau \in \snc (X, \gamma ) \ \Leftrightarrow \ \Bigl( \tau \leq
\gamma \mbox{ in $B_n$}  \Bigr).
\end{equation}
\end{prop}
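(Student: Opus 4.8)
The plan is to translate both sides of (\ref{eqn:3.06}) into statements about orbit counts and then compare. For a permutation $\sigma \in B_n$ write $k(\sigma)$ for its number of pairs of non-inversion-invariant orbits and $l(\sigma)$ for its number of zero-blocks, so that $\#(\sigma) = 2k(\sigma) + l(\sigma)$ and, by (\ref{eqn:2.02}), $\lgb(\sigma) = n - k(\sigma)$. In particular, since the orbits $Y$ and $Z$ of $\gamma$ are both inversion-invariant, we have $k(\gamma) = 0$ and hence $\lgb(\gamma) = n$. First I would unwind the partial-order condition: by the definition (\ref{eqn:1.01}), $\tau \leq \gamma$ means $\lgb(\gamma) = \lgb(\tau) + \lgb(\tau^{-1}\gamma)$, and since $\tau^{-1}\gamma \in B_n$ (so $\lgb(\tau^{-1}\gamma)$ is defined), the length formula turns this into the single numerical identity $k(\tau) + k(\tau^{-1}\gamma) = n$.

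Next I would connect this identity to non-crossingness. For $\gamma$-connected $\tau$, Definition \ref{def:2.3} together with (\ref{eqn:2.10}) says that $\tau \in \snc(X,\gamma)$ is equivalent to $\#(\tau) + \#(\tau^{-1}\gamma) = 2n$, while the genus inequality (\ref{eqn:2.08}) gives the a-priori bound $\#(\tau) + \#(\tau^{-1}\gamma) \leq 2n$ for \emph{every} $\gamma$-connected $\tau$. Substituting $\#(\sigma) = 2k(\sigma) + l(\sigma)$ gives $2\bigl(k(\tau)+k(\tau^{-1}\gamma)\bigr) = \bigl(\#(\tau)+\#(\tau^{-1}\gamma)\bigr) - \bigl(l(\tau)+l(\tau^{-1}\gamma)\bigr)$. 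Combined with the bound, this shows that $k(\tau)+k(\tau^{-1}\gamma) = n$ holds if and only if \emph{both} $\#(\tau)+\#(\tau^{-1}\gamma) = 2n$ \emph{and} $l(\tau) = l(\tau^{-1}\gamma) = 0$. Thus $\tau \leq \gamma$ is equivalent to the conjunction: $\tau \in \snc(X,\gamma)$ and neither $\tau$ nor $\tau^{-1}\gamma$ has a zero-block.

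With this equivalence in hand both implications of (\ref{eqn:3.06}) follow. The implication $\tau \leq \gamma \Rightarrow \tau \in \snc(X,\gamma)$ is immediate, as the right-hand condition merely drops the two clauses about zero-blocks. For the reverse implication I must show that a $\gamma$-connected $\tau \in \snc(X,\gamma)$ automatically has $l(\tau) = l(\tau^{-1}\gamma) = 0$. For $\tau$ itself this is exactly Proposition \ref{prop:3.4}, since $\tau \in \snc(X,\gamma) \cap B_n = \sncb(p,q)$. To handle the complement $\tau^{-1}\gamma$ I would first record that $\snc(X,\gamma)$ is invariant under the map $\tau \mapsto \tau^{-1}\gamma$: the subgroup generated by $\{\tau^{-1}\gamma, \gamma\}$ equals the one generated by $\{\tau,\gamma\}$, so $\#(\tau^{-1}\gamma,\gamma) = \#(\tau,\gamma) = 1$, and $(\tau^{-1}\gamma)^{-1}\gamma = \gamma^{-1}\tau\gamma$ is conjugate to $\tau$; hence the three orbit counts entering the genus formula (\ref{eqn:2.03}) for $\tau^{-1}\gamma$ coincide with those for $\tau$, so $g=0$ is preserved and $\tau^{-1}\gamma$ is again $\gamma$-connected. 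Therefore $\tau^{-1}\gamma \in \sncb(p,q)$ is $\gamma$-connected, and a second application of Proposition \ref{prop:3.4} yields $l(\tau^{-1}\gamma) = 0$.

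The only step carrying genuine geometric content is Proposition \ref{prop:3.4} — the fact, proved via the forbidden pattern (AC-3), that a $\gamma$-connected annular non-crossing permutation has no inversion-invariant orbit — together with the remark that it may legitimately be applied to the complement $\tau^{-1}\gamma$. Everything else is bookkeeping with the length formula (\ref{eqn:2.02}) and the genus inequality (\ref{eqn:2.08}). I therefore expect the main obstacle to be organizing the zero-block count cleanly and checking the complement symmetry; once those are secured, the equivalence is forced by the numerics.
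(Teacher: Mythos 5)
Your proof is correct and follows essentially the same route as the paper's: the same bookkeeping with the length formula (\ref{eqn:2.02}) and the genus inequality, together with Proposition \ref{prop:3.4} applied to both $\tau$ and $\tau^{-1}\gamma$ (the paper just runs the two implications separately instead of first packaging them into your single equivalence ``$\tau \leq \gamma \Leftrightarrow \tau \in \snc(X,\gamma)$ and no zero-blocks in $\tau$ or $\tau^{-1}\gamma$''). The one genuine point of difference is that where the paper cites Corollary 6.5 of \cite{MN04} for the fact that $\tau^{-1}\gamma \in \snc (X, \gamma )$, you supply a short self-contained argument via the symmetry of the genus formula under $\tau \mapsto \tau^{-1}\gamma$, which is a nice bonus.
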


\begin{proof}
``$\Rightarrow$'' $\tau$ has no inversion-invariant orbits (by
Proposition \ref{prop:3.4}), so the formula (\ref{eqn:2.02}) for
length in $B_n$ gives us that
\begin{equation} \label{eqn:3.07}
\lgb (\tau) = n - \frac{1}{2} \# ( \tau ).
\end{equation}

Let us now look at the permutation $\tau^{-1} \gamma$. It is
immediate that this permutation is in $B_n$, and that it is
$\gamma$-connected (because $\tau$ is so). On the other hand it is
still true that $\tau^{-1} \gamma$ belongs to $\snc (X, \gamma )$ --
for a proof of this, see Corollary 6.5 of \cite{MN04}. Hence
$\tau^{-1} \gamma$ also is a $\gamma$-connected permutation in $\snc
(X, \gamma ) \cap B_n = \sncb (p,q)$, and we have the analogue of
Equation (\ref{eqn:3.07}), that
\begin{equation} \label{eqn:3.08}
\lgb (\tau^{-1} \gamma ) = n - \frac{1}{2} \# ( \tau^{-1} \gamma ).
\end{equation}

By adding together the Equations (\ref{eqn:3.07}) and
(\ref{eqn:3.08}), we obtain that
\[
\lgb (\tau) + \lgb (\tau^{-1} \gamma ) = 2n - \frac{1}{2} \Bigl( \#
( \tau ) + \# ( \tau^{-1} \gamma \Bigr).
\]
But we know that $\# ( \tau ) + \# ( \tau^{-1} \gamma ) = 2n$ (see
Equation (\ref{eqn:2.10})). Thus we have obtained precisely that
\[
\lgb (\tau) + \lgb (\tau^{-1} \gamma) = 2n - \frac{1}{2} (2n) = n =
\lgb ( \gamma ),
\]
and we conclude that $\tau \leq \gamma$.

``$\Leftarrow$'' In view of Equation (\ref{eqn:2.10}) it will
suffice to show that
\begin{equation}\label{eqn:3.09}
\# ( \tau ) + \# ( \tau^{-1} \gamma ) \geq 2n.
\end{equation}

Let $k$ and $l$ denote the number of inversion-invariant orbits of
the permutations $\tau$ and $\tau^{-1} \gamma$, respectively. Then
$\lgb (\tau) = n - ( \# ( \tau ) -k )/2$ and $\lgb (\tau^{-1}
\gamma) = n - ( \# ( \tau^{-1} \gamma) -l )/2$, so we get that
\[
n = \lgb ( \gamma ) = \lgb (\tau) + \lgb (\tau^{-1} \gamma) = 2n -
\frac{1}{2} \Bigl( \# ( \tau ) + \# ( \tau^{-1} \gamma ) -k -l
\Bigr).
\]
Hence $\# ( \tau ) + \# ( \tau^{-1} \gamma ) = 2n+k +l$, and
(\ref{eqn:3.09}) follows.
\end{proof}

$\ $

\begin{center}
{\bf\large 4. The map \boldmath{$\Otilda$} and the poset
\boldmath{$\ncb (p,q)$} }
\end{center}
\setcounter{section}{4} \setcounter{equation}{0} \setcounter{thm}{0}
\setcounter{subsection}{0}

\noindent Throughout this section we continue to use the notations
$p,q$, $n := p+q$, $X,Y,Z$, $\gamma$ from Section 3.

\subsection{Orbits of permutations from \boldmath{$\sncb (p,q)$} }

\begin{notation}   \label{notation4.1}
We will denote
\begin{equation}  \label{eqn:4.1}
\ob (p,q) := \left\{ A \subseteq X \begin{array}{cl}
\vline  & \exists \, \tau \in \sncb (p,q) \mbox{ such that} \\
\vline  & \mbox{$A$ is an orbit of $\tau$}
\end{array}  \right\} .
\end{equation}
\end{notation}

\begin{rem}   \label{remark4.2}
Let $A$ be a set in $\ob (p,q)$. A permutation in $\sncb (p,q)$
which has $A$ as an orbit must also have $-A$ as an orbit, and this
implies that either $A= -A$, or $A \cap (-A) = \emptyset$. In the
case when $A= -A$, we must have that $A \subseteq Y$ or $A \subseteq
Z$, because a permutation in $\sncb (p,q)$ which has an
inversion-invariant orbit must be $\gamma$-disconnected (see
Proposition \ref{prop:3.4}).
\end{rem}

\begin{lemma}   \label{lemma4.3}
$1^o$ Let $A \in \ob (p,q)$ be such that $A$ is
$\gamma$-disconnected (that is, we have $A \subseteq Y$ or $A
\subseteq Z$). Let $\tau \in \sncb (p,q)$ be such that $A$ is an
orbit of $\tau$. Then
\begin{equation}  \label{eqn:4.2}
\tau \induc A \ = \ \gamma \induc A.
\end{equation}

$2^o$ Let $A \in \ob (p,q)$ be such that $A$ is $\gamma$-connected
(that is, $A \cap Y \neq \emptyset \neq A \cap Z$). Let $\tau \in
\sncb (p,q)$ be such that $A$ is an orbit of $\tau$. On the other
hand consider two elements $y \in A \cap Y$ and $z \in A \cap Z$,
and look at the AC-test permutation $\lambda_{-y, -z} \in \cS (X)$
(defined as in Notation \ref{def:2.7}). Then
\begin{equation}  \label{eqn:4.3}
\tau \induc A \ = \ \lambda_{-y,-z} \induc A.
\end{equation}
\end{lemma}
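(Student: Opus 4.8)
I would treat the two parts separately, part $1^o$ being immediate and part $2^o$ being where the annular non-crossing hypothesis genuinely enters. For $1^o$ I simply unwind the compatibility condition: since $\tau\in\sncb(p,q)=\snc(X,\gamma)\cap B_n$, Proposition \ref{prop:2.9} tells us that $\tau$ is compatible with $\gamma$ in the sense of Definition \ref{def:2.8}.$1^o$. Applying condition (i) of that definition to the orbit $A$, when $A\subseteq Y$ we have $A\cap Y=A$ and $A\cap Z=\emptyset$, so (i) reads $\tau\induc A=\gamma\induc A$, which is exactly (\ref{eqn:4.2}); the case $A\subseteq Z$ is identical. (Here I use that for an \emph{orbit} $A$ of $\tau$ the induced permutation $\tau\induc A$ coincides with the restriction $\tau|_A$.)

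For $2^o$ I would first read off the combinatorial shape of $\tau|_A$ from compatibility. Because $A$ is $\gamma$-connected, Proposition \ref{prop:3.4} gives $A\neq -A$ and hence $A\cap(-A)=\emptyset$; in particular $-y,-z\notin A$, so the two points $-y,-z$ lie on the companion orbit $-A$ and are fixed points of $\lambda_{-y,-z}$ disjoint from $A$. This is precisely why the antipodal cut $\lambda_{-y,-z}$ --- rather than $\lambda_{y,z}$, which would fix the points $y,z\in A$ --- is the correct object to compare against. Condition (i) of Definition \ref{def:2.8}.$1^o$ forces $\tau$ to send each element of $A\cap Y$ to its $\alpha$-successor within $A\cap Y$, and each element of $A\cap Z$ to its $\beta$-successor within $A\cap Z$, away from the transitions; condition (ii) then forces exactly one $a'\in A\cap Y$ with $\tau(a')\in Z$ and exactly one $a''\in A\cap Z$ with $\tau(a'')\in Y$ (at least one of each since $A$ is a single connected orbit, at most one of each by (ii)). Thus $\tau|_A$ is the single cycle that runs through $A\cap Y$ in $\alpha$-order, bridges from $a'$ to the $\beta$-successor of $a''$, runs through $A\cap Z$ in $\beta$-order, and bridges back from $a''$ to the $\alpha$-successor of $a'$.

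I would then compute $\lambda_{-y,-z}\induc A$ in the same language. By Notation \ref{def:2.7}, $\lambda_{-y,-z}$ is the single cycle listing $Y\setminus\{-y\}$ in $\alpha$-order followed by $Z\setminus\{-z\}$ in $\beta$-order, so inducing on $A$ (which avoids $-y,-z$) again produces a single cycle through $A\cap Y$ in $\alpha$-order and through $A\cap Z$ in $\beta$-order --- but now with bridges dictated by the cut: it exits $A\cap Y$ at the $\alpha$-predecessor of $-y$ inside $A\cap Y$, and exits $A\cap Z$ at the $\beta$-predecessor of $-z$ inside $A\cap Z$. Since $\tau|_A$ and $\lambda_{-y,-z}\induc A$ both agree with the $\gamma$-order on the two slices, they coincide as soon as their bridges coincide, and a direct check shows this happens exactly when $a'$ is the $\alpha$-predecessor of $-y$ in $A\cap Y$ and $a''$ is the $\beta$-predecessor of $-z$ in $A\cap Z$. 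The whole of $2^o$ thus reduces to locating these two exit points.

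The hard part is this last identification, and it is here that annular non-crossingness, rather than mere compatibility, is indispensable. Observe that the claim ``$a'$ is the $\alpha$-predecessor of $-y$ in $A\cap Y$'' is being asserted for \emph{every} $y\in A\cap Y$, which forces the antipodal set $(-A)\cap Y=-(A\cap Y)$ to occupy a single $\alpha$-gap of $A\cap Y$, namely the gap immediately following $a'$; an explicit configuration in which $-(A\cap Y)$ is spread over two or more gaps makes the claimed equality fail, so such configurations must be excluded, and that exclusion is the real content. I would argue by contradiction: if the $Y$-exit $a'$ were not the $\alpha$-predecessor of $-y$ --- equivalently, if some point of $-(A\cap Y)$ sat in a gap of $A\cap Y$ other than the one just after $a'$ --- then, using also the mirror behaviour of $\tau$ on $-A$, I would exhibit six points $a,b,c,d,y_0,z_0$ (drawn from $A$, from $-A$, and among the bridge values, with $y_0\in Y$, $z_0\in Z$) whose $\tau$-induced pattern is $(a,c)(b,d)(y_0,z_0)$ and whose $\lambda_{y_0,z_0}$-induced pattern is $(a,b,c,d)$ --- that is, the forbidden pattern (AC-3) of Definition \ref{def:2.8}.$2^o$, contradicting Proposition \ref{prop:2.9}. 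The symmetry $-A\in\cO(\tau)$ is essential here, both to furnish the cut points and to control where the contour for $A$ may turn, and the principal obstacle is the bookkeeping: organizing the possibilities into a clean case analysis that always produces (AC-3), with (AC-1) or (AC-2) covering the small degenerate cases. Alternatively, one may invoke the annulus-cutting correspondence developed in \cite{MN04}, which encapsulates exactly this passage from a connected annular orbit to its disc description via $\lambda_{-y,-z}$.
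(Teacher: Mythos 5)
Your part $1^o$ is fine, and coincides with the paper's argument (both are one-line consequences of compatibility). The problem is part $2^o$, where there are two issues. First, the step that you yourself flag as ``the hard part'' --- that the $\tau$-exit point of $A \cap Y$ is the $\alpha$-predecessor of $-y$ for \emph{every} $y \in A \cap Y$ --- is never actually proved: you say you ``would exhibit six points'' realizing (AC-3), concede that the bookkeeping is the principal obstacle, and then fall back on an unspecified appeal to \cite{MN04}. As written this is a plan, not a proof. Second, and more seriously, the plan cannot be carried out as described. If $\tau \induc \{ a,b,c,d,y_0,z_0 \} = (a,c)(b,d)(y_0,z_0)$, then the orbits of this induced permutation are exactly the three pairs $\{a,c\}$, $\{b,d\}$, $\{y_0,z_0\}$; since the orbits of an induced permutation are the traces on the six points of the orbits of $\tau$, the three pairs must lie in three \emph{pairwise distinct} orbits of $\tau$. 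But every point you propose to use (``from $A$, from $-A$, and among the bridge values'') lies in $A \cup (-A)$, which is only two orbits --- indeed $\tau$ may have exactly the two orbits $A$ and $-A$, in which case $\tau$ displays no (AC-3) pattern whatsoever. So the contradiction you aim for is unreachable in general; the patterns that can actually arise from points of $A \cup (-A)$ are (AC-1) and (AC-2), and your remark that these only ``cover the small degenerate cases'' has the roles exactly inverted.

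The paper's proof shows there is nothing to bookkeep, because the whole analysis of slices, bridges and exit points can be bypassed. Since $A$ is an orbit of $\tau$, the induced permutation $\tau \induc A = \tau |_A$ is a single cycle on $A$; since $-y,-z \notin A$ (as $A \cap (-A) = \emptyset$, by Remark \ref{remark4.2}), $\lambda_{-y,-z} \induc A$ is also a single cycle on $A$. If $|A| \leq 2$ the two cycles agree trivially. If $|A| \geq 3$ and they differed, pick $a \in A$ where they differ and set $b := ( \lambda_{-y,-z} \induc A)(a)$, $c := ( \tau \induc A)(a)$; these are three distinct points with $\lambda_{-y,-z} \induc \{a,b,c\} = (a,b,c)$ and $\tau \induc \{a,b,c\} = (a,c,b)$. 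Since $-y,-z$ lie in the companion orbit $-A$, disjoint from $A$, we get $\tau \induc \{ a,b,c,-y,-z \} = (a,c,b)(-y,-z)$, which is precisely the crossing pattern (AC-2) of Definition \ref{def:2.8} for the test permutation $\lambda_{-y,-z}$ --- contradicting $\tau \in \sncb (p,q)$. In other words, (AC-2) was designed exactly for this purpose (a three-point disagreement between $\tau$ and an AC-test permutation), and it is the main engine of the proof, not a degenerate case.
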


\begin{proof} $1^o$ If $A \subseteq Y$, then
$\tau \induc A = ( \tau \induc Y ) \induc A$ = $( \gamma \induc
Y)\induc A = \gamma \induc A$ (we used the equality $\tau \induc Y =
\gamma \induc Y$, which is part of the requirements of compatibility
between $\tau$ and $\gamma$). The case when $A \subseteq Z$ is
analogous.

$2^o$ As observed in Remark \ref{remark4.2}, we have $A \cap (-A) =
\emptyset$.  So $-y, -z \not\in A$, which in turn implies that
$\lambda_{-y,-z} \induc A$ is a cyclic permutation of $A$.

If $|A| \leq 2$, then the equality (\ref{eqn:4.3}) follows just from
the fact that both $\lambda_{-y,-z} \induc A$ and $\tau \induc A$
are cyclic permutations of $A$.

Suppose then that $|A| \geq 3$. If the equality (\ref{eqn:4.3}) did
not hold, then there would exist three distinct elements $a,b,c \in
A$ such that
\[
\lambda_{-y,-z} \induc \{ a,b,c \} = (a,b,c), \ \ \tau \induc \{
a,b,c \} = (a,c,b).
\]
But then the five elements $a,b,c,-y,-z$ would produce an occurrence
of the crossing pattern (AC-2) in $\tau$ -- contradiction.
\end{proof}

\begin{defn} \label{definition4.4}
Let $A$ be a set in $\ob (p,q)$. From the preceding lemma it is
immediate that if $\tau_1 , \tau_2$ are permutations in $\sncb
(p,q)$ which have $A$ as an orbit, then we must have $\tau_1 \induc
A = \tau_2 \induc A$. It thus makes sense to define a permutation
$\mu_A \in \cS (A)$ by stipulating that
\begin{equation}  \label{eqn:4.4}
\mu_A = \tau \induc A,
\end{equation}
where $\tau$ is an arbitrary permutation in $\sncb (p,q)$ having $A$
as an orbit. We will refer to $\mu_A$ as the {\bf canonical
permutation} of $A$.
\end{defn}

\begin{rem}  \label{remark4.4}
Let $A$ be a set in $\ob (p,q)$, and consider the canonical
permutation $\mu_A \in \cS (A)$ defined above.

$1^o$ Equations (\ref{eqn:4.2}) and (\ref{eqn:4.3}) from Lemma
\ref{lemma4.3} give us ``explicit'' formulas for $\mu_A$: if $A$ is
$\gamma$-disconnected then
\begin{equation}  \label{eqn:4.5}
\mu_A = \gamma \induc A;
\end{equation}
while if $A$ is $\gamma$-connected (which implies that $A \cap (-A)
= \emptyset$) then
\begin{equation}  \label{eqn:4.6}
\mu_A = \lambda_{-y,-z} \induc A,
\end{equation}
for an arbitrary choice of $y \in A \cap Y$ and $z \in A \cap Z$.

$2^o$ Note that in the case when $A$ is $\gamma$-connected we still
have that
\begin{equation}  \label{eqn:4.7}
\mu_A \induc (A \cap Y) = \gamma \induc (A \cap Y), \ \ \mu_A \induc
(A \cap Z) = \gamma \induc (A \cap Z).
\end{equation}
The first of these two equalities follows from the immediate
observation that
\[
\lambda_{-y,-z} \induc (Y \setminus\{ -y \} ) = \gamma \induc (Y
\setminus\{ -y \} ),
\]
combined with the fact that $A \cap Y \subseteq Y \setminus \{ -y
\}$. The second equality is proved by a similar argument, this time
in reference to $A \cap Z$.

$3^o$ Let us record here a fact that will be used later: suppose
that $A$ is $\gamma$-connected and that $a,b,c,d$ are four distinct
elements of $A$ such that $\mu_A \downarrow \{ a,b,c,d \} =
(a,b,c,d)$. Then it is not possible to have $a,c \in Y$ and $b,d \in
Z$. Indeed, let us pick some elements $y \in A \cap Y$ and $z \in A
\cap Z$. From part $1^o$ of this remark it follows that
\[
\lambda_{-y,-z} \induc \{ a,b,c,d \} = \mu_A \induc \{ a,b,c,d \} =
(a,b,c,d);
\]
and it is clear, directly from the definition of $\lambda_{-y,-z}$
(see Notation \ref{def:2.7}), that $\lambda_{-y,-z} \induc \{
a,b,c,d \}$ could not be $(a,b,c,d)$ if we were to have $a,c \in Y$
and $b,d \in Z$.
\end{rem}

\subsection{The partitions \boldmath{$\Omega ( \tau )$} and
\boldmath{$\Otilda ( \tau )$} }

\begin{rem}   \label{rem:4.6}
In this subsection we move from individual orbits to {\em orbit
partitions} for permutations in $\sncb (p,q)$; that is, for every
$\tau \in \sncb (p,q)$ we consider the partitions $\Omega ( \tau )$
and $\Otilda ( \tau )$ defined in Notation \ref{def:1.2} of the
Introduction section. From the considerations in subsection 4.1 it
follows that the orbit map
\begin{equation}  \label{eqn:4.8}
\sncb (p,q) \ni \tau \mapsto \cO ( \tau ) \in \cP (X)
\end{equation}
is one-to-one; indeed, if $\tau \in \sncb (p,q)$ has orbit partition
$\pi \in \cP (X)$, then we know how to retrieve $\tau$ from $\pi$ --
we just have to put together the canonical permutations $\mu_A \in
\cS (A)$, where $A$ runs in the set of blocks of $\pi$. But let us
note that the orbit map (\ref{eqn:4.8}) is not order preserving,
when $\sncb (p,q)$ has the partial ordered induced from $B_n$, while
$\cP (X)$ is partially ordered by reverse refinement. Indeed, it is
clear for instance that if $\tau \in \sncb (p,q)$ is
$\gamma$-connected, then we have $\tau \leq \gamma$, but $\cO ( \tau
) \not\leq \cO ( \gamma )$. The next lemma shows that the
order-preservation issue is resolved if one works with $\Otilda (
\tau )$ instead of $\Omega ( \tau )$.
\end{rem}

\begin{lemma}   \label{lemma4.7}
The map $B_n \ni \tau \mapsto \Otilda ( \tau ) \in \cP (X)$ is order
preserving, where the partial order considered on $B_n$ is the one
coming from the length function $\lgb$, while $\cP (X)$ is partially
ordered by reverse refinement.
\end{lemma}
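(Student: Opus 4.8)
The plan is to reduce the statement to covers and then run through the four types of cover produced by Proposition \ref{proposition3.5}. Since reverse refinement is transitive and the order on $B_n$ coming from $\lgb$ is graded (with $\lgb$ as rank function, every relation $\sigma \leq \tau$ is witnessed by a saturated chain of covers), it suffices to show: whenever $\tau$ covers $\sigma$ in $B_n$, the partition $\Otilda ( \tau )$ is a coarsening of $\Otilda ( \sigma )$. For such a cover Proposition \ref{proposition3.5} tells us that $\sigma^{-1}\tau$ is either $(i,-i)$ or $(i,j)(-i,-j)$ in one of three configurations, and in every case only the orbits of $\sigma$ meeting $\{ i,-i,j,-j \}$ are disturbed on passing to $\tau$. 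I would compute the orbit partition $\Omega ( \tau )$ explicitly from $\Omega ( \sigma )$ in each of the four situations.

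In situations (a), (b), (c) I expect $\Omega ( \tau )$ itself to be a coarsening of $\Omega ( \sigma )$. Concretely: in (a) multiplication by $(i,-i)$ fuses the two distinct orbits $A \ni i$ and $-A \ni -i$ into the single inversion-invariant orbit $A \cup (-A)$; in (b) the zero-block $Z \ni i,-i$ together with the orbits $B \ni j$ and $-B \ni -j$ fuse into the single zero-block $Z \cup B \cup (-B)$; and in (c) the two disjoint pairs $\{ A_i,-A_i \}$, $\{ A_j,-A_j \}$ fuse into the single pair $\{ A_i \cup A_j,\, -(A_i \cup A_j) \}$. Each is visibly a block-merge, so $\Omega$ is coarsened; passing to $\Otilda$ preserves this, since a routine check shows that no non-inversion-invariant orbit of $\tau$ can swallow an inversion-invariant orbit of $\sigma$ (else that invariant set would lie in $D \cap (-D) = \emptyset$ for a non-invariant $\tau$-orbit $D$), so the merged zero-block of $\tau$ is exactly the merged zero-block of $\sigma$ enlarged by whole non-invariant $\sigma$-orbits. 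Hence $\Otilda ( \sigma ) \leq \Otilda ( \tau )$ in (a), (b), (c).

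The only delicate situation is (d), which I expect to be the main obstacle and the reason that $\Otilda$ rather than $\Omega$ must be used. Here $i,-j$ lie in a non-inversion-invariant orbit $B$ and $-i,j$ lie in $-B$; multiplying by $(i,j)(-i,-j)$ first merges $B$ and $-B$ (through $(i,j)$) and then splits the merged orbit (through $(-i,-j)$, whose two points now lie in a common orbit). The outcome is two \emph{new} orbits $C_1, C_2$ with $C_1 \cup C_2 = B \cup (-B)$, and a rank count (the cover must drop the number of non-inversion-invariant pairs by one) together with closure of the orbit set under negation forces both $C_1$ and $C_2$ to be zero-blocks. Thus $\Omega ( \tau )$ need \emph{not} refine into $\Omega ( \sigma )$ -- the blocks $C_1, C_2$ generally cut across $B$ and $-B$ -- but once we pass to $\Otilda$ these two new zero-blocks are amalgamated (with any pre-existing zero-block of $\sigma$) into a single block whose underlying set is $(B \cup (-B)) \cup (\text{zero-blocks of } \sigma)$. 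Since $B$, $-B$ and the merged zero-block of $\sigma$ are all blocks of $\Otilda ( \sigma )$, this amalgamated block is a union of blocks of $\Otilda ( \sigma )$, giving $\Otilda ( \sigma ) \leq \Otilda ( \tau )$. Verifying the orbit computations in (b) and (d) -- in particular that the merge-then-split of (d) lands on two inversion-invariant blocks -- is the one place where a short but careful calculation with the cycle structure of $\tau = \sigma \cdot ( \sigma^{-1} \tau )$ is needed.
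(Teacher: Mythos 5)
Your proposal is correct and follows essentially the same route as the paper: the paper's proof also reduces to cover relations via gradedness of the order on $B_n$ and then checks the four cover types of Proposition \ref{proposition3.5}, though it leaves that case analysis as ``easily seen.'' Your write-up simply supplies the details the paper omits, including the key point in case (d) where $\Omega(\tau)$ fails to coarsen $\Omega(\sigma)$ but the two new zero-blocks get amalgamated in $\Otilda(\tau)$ --- which is exactly the phenomenon motivating the use of $\Otilda$ rather than $\Omega$.
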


\begin{proof}
By using the explicit description of the cover relation in $B_n$ (as
reviewed in Proposition \ref{proposition3.5}), it is easily seen
that we have $\Otilda ( \sigma ) \leq \Otilda ( \tau )$ when $\sigma
, \tau \in B_n$ are such that $\tau$ covers $\sigma$. This in turn
immediately implies that the inequality $\Otilda ( \sigma ) \leq
\Otilda ( \tau )$ must actually hold whenever $\sigma \leq \tau$ in
$B_n$.
\end{proof}

On the other hand let us point out that if $\tau \in \sncb (p,q)$,
then going from $\cO ( \tau )$ to $\Otilda ( \tau )$ is only a minor
adjustment which can always be reversed, as explained in the next
lemma.

\begin{lemma}  \label{lemma4.8}
Let $\tau$ be a permutation in $\sncb (p,q)$, and consider the
following condition on the partition $\Otilda ( \tau)$:
\begin{equation}  \label{eqn:4.9}
\mbox{``There exists a block $A$ of $\Otilda ( \tau )$ which is
inversion-invariant and $\gamma$-connecting.''}
\end{equation}
If this condition is fulfilled, then the block $A$ with the deemed
properties $(A= -A$ and $A \cap Y \neq \emptyset \neq A \cap Z)$ is
uniquely determined, and $\cO ( \tau )$ is obtained from $\Otilda (
\tau )$ by splitting $A$ into $A \cap Y$ and $A \cap Z$. In the
opposite case, when the above condition is not fulfilled, we have
that $\cO ( \tau ) = \Otilda ( \tau )$.
\end{lemma}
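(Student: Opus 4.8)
The plan is to compare the orbit partition $\cO ( \tau )$ with $\Otilda ( \tau )$ by keeping careful track of the inversion-invariant orbits of $\tau$. Writing $\cO ( \tau ) = \{ A_1, -A_1, \ldots , A_k, -A_k, Z_1, \ldots , Z_l \}$ as in (\ref{eqn:1.03}), the blocks $Z_1, \ldots , Z_l$ are exactly the zero-blocks, and $\Otilda ( \tau )$ is obtained by replacing them with the single block $W := Z_1 \cup \cdots \cup Z_l$. First I would invoke Remark \ref{remark4.2} (which itself rests on Proposition \ref{prop:3.4}) to record that every zero-block $Z_j$ satisfies $Z_j \subseteq Y$ or $Z_j \subseteq Z$. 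Since $Z_j = -Z_j$ for each $j$, the merged block $W$ is again inversion-invariant, whereas none of $A_1, -A_1, \ldots , A_k, -A_k$ is (such a block $A_i$ has $A_i \cap ( -A_i ) = \emptyset$). Hence $W$ is the only block of $\Otilda ( \tau )$ that can be inversion-invariant, which already yields the uniqueness asserted in the lemma: any block $A$ of $\Otilda ( \tau )$ satisfying $A = -A$ and $A \cap Y \neq \emptyset \neq A \cap Z$ must equal $W$.

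The crux of the matter --- and the step I expect to be the main obstacle --- is the claim that $\tau$ has at most one zero-block contained in $Y$, and symmetrically at most one contained in $Z$. I would prove this by contradiction, using that on $Y$ the inversion $i \mapsto -i$ coincides with the antipodal symmetry of the cycle $\alpha = \gamma \induc Y = (1, \ldots , p, -1, \ldots , -p)$; indeed $\alpha^p (i) = -i$ for every $i \in Y$, so $i$ and $-i$ sit diametrically opposite on this $2p$-cycle. Suppose $A, A' \subseteq Y$ are two distinct zero-blocks, and choose $a \in A$, $b \in A'$. Then $-a \in A$ and $-b \in A'$, the four elements $a, -a, b, -b$ are distinct (as $A \cap A' = \emptyset$), and the two antipodal pairs must interleave around the cycle, since their connecting chords are distinct diameters and hence cross. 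Consequently $\gamma \induc \{ a, -a, b, -b \}$ is a four-cycle in which the two elements of $A$ alternate with the two elements of $A'$, while $\tau \induc \{ a, -a, b, -b \} = (a, -a)(b, -b)$, because $a, -a$ belong to the single $\tau$-orbit $A$ and $b, -b$ to the single orbit $A'$. After relabelling these four points in $\gamma$-cyclic order, this is precisely an occurrence of the crossing pattern (AC-1), contradicting $\tau \in \snc ( X, \gamma )$ by Proposition \ref{prop:2.9}.

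With this claim established, the two cases of the lemma follow directly. If condition (\ref{eqn:4.9}) holds, then $W$ is $\gamma$-connecting and so meets both $Y$ and $Z$; by the claim this forces $l = 2$, with one zero-block $Z_1 \subseteq Y$ and the other $Z_2 \subseteq Z$, so that $W \cap Y = Z_1$ and $W \cap Z = Z_2$ are exactly these two orbits of $\tau$. Thus $\cO ( \tau )$ is recovered from $\Otilda ( \tau )$ by splitting the (unique) block $A = W$ into $A \cap Y$ and $A \cap Z$, as claimed. In the opposite case $W$ is not $\gamma$-connecting, forcing $l \leq 1$: either $l = 0$ and no blocks are merged, or $l = 1$ and $W = Z_1$ is already a single orbit. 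In both situations replacing $Z_1, \ldots , Z_l$ by their union changes nothing, so $\cO ( \tau ) = \Otilda ( \tau )$.
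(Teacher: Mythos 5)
Your proof is correct, but it justifies the crucial step by a different route than the paper does. Both arguments reduce the lemma to the same key fact: a permutation $\tau \in \sncb (p,q)$ has at most one zero-block contained in $Y$ and at most one contained in $Z$ (all zero-blocks lying inside $Y$ or inside $Z$ by Proposition \ref{prop:3.4}), after which the uniqueness of the merged block and the two cases of the lemma follow by bookkeeping, exactly as you do. The paper obtains this key fact by first splitting into the cases where $\tau$ is $\gamma$-connected (then there are no zero-blocks at all, by Proposition \ref{prop:3.4}) and where $\tau$ is $\gamma$-disconnected; in the latter case it invokes Proposition \ref{prop:3.2} to view $\tau \induc Y$ and $\tau \induc Z$ as disc non-crossing permutations of type B, and then cites Reiner \cite{R97} for the statement that a disc non-crossing partition of type B has at most one zero-block. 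You instead prove the key fact from scratch: since $\alpha = \gamma \induc Y$ satisfies $\alpha^p (i) = -i$, each zero-block inside $Y$ contains an antipodal pair of the $2p$-cycle $\alpha$, two distinct antipodal pairs necessarily interleave, and together with the observation that $\tau \induc \{ a,-a,b,-b \} = (a,-a)(b,-b)$ this produces the crossing pattern (AC-1), contradicting Proposition \ref{prop:2.9}. Your route costs a short combinatorial argument (the interleaving of diameters, which you verify correctly, including the non-obvious point that the induced permutation really is $(a,-a)(b,-b)$), but it buys self-containedness: it needs neither the reduction of Proposition \ref{prop:3.2} nor the external citation to \cite{R97}, and in fact it re-proves Reiner's ``at most one zero-block'' fact in the annular setting, uniformly in $\tau$, without any case split on $\gamma$-connectedness.
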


\begin{proof} We discuss separately the cases when $\tau$ is
$\gamma$-connected and when it is $\gamma$-disconnected.

{\em Case 1.} If $\tau$ is $\gamma$-connected, then we know that
$\tau$ has no inversion-invariant orbits (by Proposition
\ref{prop:3.4}). In this case we observe that $\Otilda ( \tau ) =
\cO ( \tau )$, and that, on the other hand, $\Otilda ( \tau )$ does
not satisfy the condition (\ref{eqn:4.9}). The conclusion of the
lemma checks out.

{\em Case 2.} Suppose now that $\tau$ is $\gamma$-disconnected. Then
$\cO ( \tau )$ has at most two inversion-invariant orbits; and
moreover, if $\cO ( \tau )$ has exactly two inversion-invariant
orbits, then one of them is contained in $Y$ and the other is
contained in $Z$. This follows immediately from Proposition
\ref{prop:3.2}, and the fact that a permutation in $\sncb (p)$ or in
$\sncb (q)$ has at most one inversion-invariant orbit (the latter
fact is explained on p. 198 of \cite{R97}, the terminology used
there being that ``a non-crossing partition of type B has at most
one zero-block''). It is thus clear that the only possibility for
$\Otilda ( \tau ) \neq  \cO ( \tau )$ is when both $\tau \induc Y$
and $\tau \induc Z$ have inversion-invariant orbits. This also is
the only possibility for having $\Otilda ( \tau )$ satisfy the
condition (\ref{eqn:4.9}) -- hence the conclusion of the lemma
checks out in this case as well.
\end{proof}

\begin{prop}   \label{prop:4.9}
The map $\sncb (p,q) \ni \tau \mapsto \Otilda ( \tau ) \in \cP (X)$
is one-to-one, and it is order preserving (where $\sncb (p,q)$ is
partially ordered as an interval of $B_n$, while $\cP (X)$ is
partially ordered by reverse refinement.)
\end{prop}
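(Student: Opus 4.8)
The plan is to prove the two assertions---injectivity and order-preservation---by reducing each to a fact already in hand. Order-preservation is essentially free: Lemma \ref{lemma4.7} already establishes that $\tau \mapsto \Otilda(\tau)$ is order-preserving on all of $B_n$, and $\sncb(p,q)$ carries exactly the restriction of the $B_n$ order (it is the interval $\{\tau \in B_n \mid \tau \leq \gamma\}$ by Theorem \ref{theorem1}), so the restricted map is order-preserving immediately. The real content of this proposition is injectivity, and that is where I would concentrate the work.

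For injectivity, the natural strategy is to show that $\Otilda(\tau)$ remembers enough to recover $\cO(\tau)$, and then invoke the already-established injectivity of the orbit map $\tau \mapsto \cO(\tau)$ on $\sncb(p,q)$ (Remark \ref{rem:4.6}). The key tool is Lemma \ref{lemma4.8}, which describes precisely how $\Otilda(\tau)$ differs from $\cO(\tau)$: either they coincide, or $\Otilda(\tau)$ has a \emph{unique} block $A$ that is both inversion-invariant and $\gamma$-connecting, in which case $\cO(\tau)$ is obtained by splitting that $A$ into $A \cap Y$ and $A \cap Z$. The crucial observation is that this description is phrased entirely in terms of the partition $\Otilda(\tau)$ itself---the condition (\ref{eqn:4.9}) is a property of the partition, and the splitting recipe is canonical. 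So given the partition $\Otilda(\tau)$ alone, one can reconstruct $\cO(\tau)$: test whether (\ref{eqn:4.9}) holds, and if so split the unique offending block. This shows $\tau_1, \tau_2 \in \sncb(p,q)$ with $\Otilda(\tau_1) = \Otilda(\tau_2)$ must have $\cO(\tau_1) = \cO(\tau_2)$, whence $\tau_1 = \tau_2$ by the injectivity of the orbit map.

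I would lay out the injectivity argument as follows. First, suppose $\Otilda(\tau_1) = \Otilda(\tau_2) =: \rho$. Apply Lemma \ref{lemma4.8} to each of $\tau_1, \tau_2$. The hypothesis of condition (\ref{eqn:4.9}) depends only on $\rho$, so either both $\tau_1$ and $\tau_2$ satisfy it or neither does. In the case where neither satisfies it, Lemma \ref{lemma4.8} gives $\cO(\tau_1) = \rho = \cO(\tau_2)$ directly. In the case where both satisfy it, the unique inversion-invariant $\gamma$-connecting block $A$ of $\rho$ is the same for both, and Lemma \ref{lemma4.8} tells us that $\cO(\tau_i)$ is gotten from $\rho$ by splitting $A$ into $A \cap Y$ and $A \cap Z$ in both cases; hence again $\cO(\tau_1) = \cO(\tau_2)$. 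Either way $\cO(\tau_1) = \cO(\tau_2)$, and injectivity of the orbit map finishes the proof.

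The step I expect to be the main (though modest) obstacle is making sure that the reconstruction of $\cO(\tau)$ from $\Otilda(\tau)$ is genuinely canonical---that is, that Lemma \ref{lemma4.8} really does let the partition $\rho = \Otilda(\tau)$ dictate the outcome without reference to $\tau$. This hinges on the \emph{uniqueness} clause in Lemma \ref{lemma4.8} (the offending block $A$ is uniquely determined) and on the fact that the test (\ref{eqn:4.9}) is a condition on $\rho$ alone. Once those are pinned down, there is no genuine difficulty; the argument is a clean combination of Lemma \ref{lemma4.8}, the injectivity from Remark \ref{rem:4.6}, and Lemma \ref{lemma4.7}, with Theorem \ref{theorem1} supplying the identification of $\sncb(p,q)$ as an interval so that ``order-preserving on $B_n$'' descends to ``order-preserving on $\sncb(p,q)$.''
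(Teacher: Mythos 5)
Your proposal is correct and follows exactly the paper's own argument: order-preservation from Lemma \ref{lemma4.7}, and injectivity by using Lemma \ref{lemma4.8} to recover $\cO(\tau)$ canonically from $\Otilda(\tau)$ and then invoking the injectivity of the orbit map from Remark \ref{rem:4.6}. Your write-up simply makes explicit the case analysis (condition (\ref{eqn:4.9}) holding or not) that the paper leaves implicit in the phrase ``Lemma \ref{lemma4.8} implies that $\cO ( \sigma ) = \cO ( \tau )$.''
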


\begin{proof} The ``order preserving'' part of the proposition is a
direct consequence of Lemma \ref{lemma4.7}. The ``one-to-one'' part
is also immediate: if $\sigma , \tau \in \sncb (p,q)$ are such that
$\Otilda ( \sigma ) = \Otilda ( \tau )$, then Lemma \ref{lemma4.8}
implies that $\cO ( \sigma ) = \cO ( \tau )$, and then the
injectivity observed in Remark \ref{rem:4.6} implies that $\sigma =
\tau$.
\end{proof}

\subsection{Proof of Theorem 1.4}

We will first prove several lemmas concerning the canonical
permutations $\mu_A$ ($A \in \ob (p,q)$) which were introduced in
Definition \ref{definition4.4}.

\begin{lemma}   \label{lemma4.12}
Let $A,B \in \ob (p,q)$ be such that $A \subseteq B$. Then $\mu_B
\induc A = \mu_A$.
\end{lemma}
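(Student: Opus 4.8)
The plan is to reduce everything to the two explicit descriptions of a canonical permutation recorded in Remark \ref{remark4.4}, used in conjunction with the elementary composition law for induced permutations, namely that
\[
\tau \induc A = ( \tau \induc B ) \induc A \qquad \text{whenever } A \subseteq B \subseteq X, \ \tau \in \cS (X).
\]
(This is the same identity already invoked in the proof of Lemma \ref{lemma4.3}: following $\tau$ and recording the first return to $A$ gives the same result as first recording returns to the intermediate set $B$ and then recording returns to $A \subseteq B$.) Since no single permutation of $\sncb (p,q)$ can have both $A$ and $B$ as orbits when $A \subsetneq B$, I would not try to produce $\mu_B \induc A$ and $\mu_A$ from one common $\tau$; instead I would write each of $\mu_A$ and $\mu_B$ using formula (\ref{eqn:4.5}) or (\ref{eqn:4.6}), whichever applies, and then collapse $\mu_B \induc A$ by means of the composition law.

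The argument then splits according to the connectivity type of $B$. If $B$ is $\gamma$-disconnected, then $B \subseteq Y$ or $B \subseteq Z$, hence so is $A$, and both $\mu_A$ and $\mu_B$ are given by (\ref{eqn:4.5}); the composition law yields $\mu_B \induc A = ( \gamma \induc B ) \induc A = \gamma \induc A = \mu_A$. If $B$ is $\gamma$-connected, then $B \cap (-B) = \emptyset$ (Remark \ref{remark4.2}), so also $A \cap (-A) = \emptyset$. Fix $y \in B \cap Y$ and $z \in B \cap Z$; by (\ref{eqn:4.6}) we have $\mu_B = \lambda_{-y,-z} \induc B$. In the subcase where $A$ too is $\gamma$-connected I would choose $y \in A \cap Y$ and $z \in A \cap Z$, so that the same $\lambda_{-y,-z}$ computes both canonical permutations; the composition law then immediately gives $\mu_B \induc A = \lambda_{-y,-z} \induc A = \mu_A$.

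The one delicate subcase — and the step I expect to be the main obstacle — is when $B$ is $\gamma$-connected but $A$ is $\gamma$-disconnected, say $A \subseteq Y$. Here $\mu_A = \gamma \induc A$ is built from the reference permutation $\gamma$, whereas $\mu_B = \lambda_{-y,-z} \induc B$ is built from an AC-test permutation, so the two templates must be reconciled. The composition law first gives $\mu_B \induc A = \lambda_{-y,-z} \induc A$, and I would finish by invoking the identity $\lambda_{-y,-z} \induc (Y \setminus \{ -y \}) = \gamma \induc (Y \setminus \{ -y \})$ noted in part $2^o$ of Remark \ref{remark4.4}. The point that makes this applicable is that $-y \notin A$: indeed $y \in B$ together with $B \cap (-B) = \emptyset$ forces $-y \notin B$, whence $-y \notin A$, so $A \subseteq Y \setminus \{ -y \}$. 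A further application of the composition law then gives $\lambda_{-y,-z} \induc A = ( \gamma \induc (Y \setminus \{ -y \}) ) \induc A = \gamma \induc A = \mu_A$, completing this subcase (the case $A \subseteq Z$ being symmetric, using $-z \notin A$).
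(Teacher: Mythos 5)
Your proposal is correct and follows essentially the same route as the paper's proof: both arguments reduce $\mu_A$ and $\mu_B \induc A$ to the explicit formulas (\ref{eqn:4.5})--(\ref{eqn:4.6}) of Remark \ref{remark4.4}.1, together with the composition law for induced permutations, choosing $y,z$ inside $A$ when $A$ is $\gamma$-connected so that a single AC-test permutation computes both sides. The only difference is organizational: the paper splits on the connectivity of $A$ and disposes of the $\gamma$-disconnected case in one line, whereas you split on the connectivity of $B$ and spell out in detail the subcase ($B$ connected, $A$ disconnected) that the paper treats implicitly via Equation (\ref{eqn:4.7}).
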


\begin{proof} If $A$ is $\gamma$-disconnected, then both $\mu_A$ and
$\mu_B \induc A$ are equal to $\gamma \induc A$. Let us then assume
that $A$ is $\gamma$-connected, and let us pick two elements $y \in
A \cap Y$ and $z \in A \cap Z$. We have in particular that $y \in B
\cap Y$ and $z \in B \cap Z$, and it follows that both $\mu_A$ and
$\mu_B \induc A$ are equal to $\lambda_{-y,-z} \induc A$.
\end{proof}

\begin{lemma}   \label{lemma4.13}
Let $A$ be a set in $\ob (p,q)$, and suppose that $\sigma$ is a
permutation in $\sncb (p,q)$ such that $A$ is a union of orbits of
$\sigma$. Then $\sigma \induc A \in \snc (A, \mu_A )$.
\end{lemma}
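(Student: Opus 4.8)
The plan is to exploit the fact that $\mu_A$ is a \emph{single} cycle on $A$. Since $A$ is an orbit of some permutation in $\sncb(p,q)$, the canonical permutation $\mu_A = \tau\induc A$ is the restriction of $\tau$ to one of its orbits, so $\#(\mu_A)=1$. The relevant notion of non-crossing with respect to $\mu_A$ is therefore the \emph{disc} one, and by Proposition \ref{prop:2.5} it suffices to verify two things: that $\sigma\induc A$ is compatible with $\mu_A$, and that $\sigma\induc A$ does not display the crossing pattern (DC) with respect to $\mu_A$. Compatibility is the easy half: because $A$ is a union of $\sigma$-orbits, every orbit $B$ of $\sigma\induc A$ is an actual $\sigma$-orbit contained in $A$, hence $B\in\ob(p,q)$ and $(\sigma\induc A)\induc B = \sigma\induc B = \mu_B$; on the other hand Lemma \ref{lemma4.12} gives $\mu_A\induc B = \mu_B$ since $B\subseteq A$. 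The two agree, which is precisely compatibility in the sense of Definition \ref{def:2.4}.

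The substance lies in excluding (DC). Suppose for contradiction there are distinct $a,b,c,d\in A$ with $\mu_A\induc\{a,b,c,d\}=(a,b,c,d)$ and $\sigma\induc\{a,b,c,d\}=(a,c)(b,d)$; the idea is to manufacture from these an annular crossing pattern of $\sigma$ with respect to $\gamma$, contradicting $\sigma\in\snc(X,\gamma)$. If $A$ is $\gamma$-disconnected, then $\mu_A=\gamma\induc A$ by Remark \ref{remark4.4}, so $\gamma\induc\{a,b,c,d\}=(a,b,c,d)$ and the data is at once an occurrence of (AC-1). Assume then that $A$ is $\gamma$-connected, so $A\cap(-A)=\emptyset$ (Remark \ref{remark4.2}) and $\mu_A=\lambda_{-y,-z}\induc A$ for any $y\in A\cap Y$, $z\in A\cap Z$. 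The relation $\sigma\induc\{a,b,c,d\}=(a,c)(b,d)$ forces $a,c$ into a common $\sigma$-orbit $O_1$ and $b,d$ into a common $\sigma$-orbit $O_2$, and the decisive dichotomy is whether one of these orbits is $\gamma$-connected.

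If some linked pair is bicoloured — say $a\in Y$, $c\in Z$ — then $O_1$ is $\gamma$-connected, and I take $y:=a$, $z:=c$. By the type-B relation $\sigma(-i)=-\sigma(i)$ the elements $-y,-z$ then lie in the common $\sigma$-orbit $-O_1$, while $-y,-z\in -A$ are disjoint from $\{a,b,c,d\}\subseteq A$; consequently the induced permutation $\sigma\induc\{a,b,c,d,-y,-z\}$ splits across the disjoint orbit-unions $A$ and $-A$ as $(a,c)(b,d)\cdot(-y,-z)$. Since $\lambda_{-y,-z}\induc\{a,b,c,d\}=\mu_A\induc\{a,b,c,d\}=(a,b,c,d)$ with $-y\in Y$, $-z\in Z$, this is an occurrence of (AC-3) — the contradiction. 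If instead both linked pairs are monochromatic, then either all of $a,b,c,d$ share one colour, giving (AC-1) through $\mu_A\induc(A\cap Y)=\gamma\induc(A\cap Y)$ (Remark \ref{remark4.4}), or $\{a,c\}$ and $\{b,d\}$ occupy opposite circles, which is exactly the configuration ruled out in part $3^o$ of Remark \ref{remark4.4}. I expect the main obstacle to be the (AC-3) construction: one must check that the six elements $a,b,c,d,-y,-z$ are genuinely distinct and that inducing decomposes cleanly over the disjoint unions $A$ and $-A$ so that the auxiliary transposition $(-y,-z)$ materialises. The freedom to place $y,z$ inside a $\gamma$-connected $\sigma$-orbit is what makes this work, and Remark \ref{remark4.4} is what disposes of the residual monochromatic-but-alternating case.
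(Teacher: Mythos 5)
Your proposal is correct and follows essentially the same route as the paper's proof: compatibility is verified exactly as in the paper (orbits of $\sigma\induc A$ are $\sigma$-orbits, plus Lemma \ref{lemma4.12}), and the exclusion of (DC) proceeds by the same mechanism of converting a hypothetical crossing into (AC-1) via Remark \ref{remark4.4}.2, into (AC-3) by negating the bicoloured pair to produce the auxiliary transposition inside $-A$, or into a contradiction with Remark \ref{remark4.4}.3 in the alternating monochromatic case. The only differences are cosmetic — you split cases on whether $A$ (rather than $\{a,b,c,d\}$) is $\gamma$-connected, and you negate the pair $\{a,c\}$ where the paper negates $\{b,d\}$.
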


\begin{proof} We will use the description of $\snc (A, \mu_A )$ in
terms of crossing pattern (DC), as reviewed in Definition
\ref{def:2.4} and Proposition \ref{prop:2.5}.

We first check that $\sigma \induc A$ is compatible with $\mu_A$.
This amounts to checking that for every orbit $B$ of $\sigma \induc
A$ we have
\begin{equation}  \label{eqn:4.131}
( \sigma \induc A )\induc B = \mu_A \induc B.
\end{equation}
But every orbit $B$ of $\sigma \induc A$ is in fact an orbit of
$\sigma$ (since it is given that $A$ is a union of orbits of
$\sigma$); thus $B \in \ob (p,q)$, and both sides of Equation
(\ref{eqn:4.131}) are equal to the canonical permutation $\mu_B$
(where on the right-hand side we invoke the preceding lemma).

We now go to proving that $\sigma \induc A$ cannot display the
crossing pattern (DC) with respect to $\mu_A$. Assume for
contradiction that there exist four distinct points $a,b,c,d \in A$
such that
\begin{equation}  \label{eqn:4.132}
\mu_A \induc \{ a,b,c,d \} = (a,b,c,d), \ \ ( \sigma \induc A )
\induc \{ a,b,c,d \} = (a,c)(b,d).
\end{equation}
We distinguish two cases.

\vspace{10pt}

{\em Case 1.} $\{ a,b,c,d \}$ is a $\gamma$-disconnected subset of
$X$; that is, we have that either $\{ a,b,c,d \} \subseteq Y$ or $\{
a,b,c,d \} \subseteq Z$.

In this case, Equation (\ref{eqn:4.7}) from Remark \ref{remark4.4}.2
implies that $\mu_A \induc \{ a,b,c,d \} = \gamma \induc \{ a,b,c,d
\}$. Thus the conditions in (\ref{eqn:4.132}) amount to
\[
\gamma \induc \{ a,b,c,d \} = (a,b,c,d), \ \ \sigma \induc \{
a,b,c,d \} = (a,c)(b,d),
\]
and this implies that $\sigma$ displays the crossing pattern (AC-1)
with respect to $\gamma$ -- contradiction.

\vspace{10pt}

{\em Case 2.} $\{ a,b,c,d \}$ is a $\gamma$-connected subset of $X$
(i.e. $\{ a,b,c,d \} \cap Y \neq \emptyset \neq \{ a,b,c,d \} \cap
Z$).

In this case we must have that at least one of the two sets $\{ a,c
\}$ and $\{ b,d \}$ is $\gamma$-connected. Indeed, if both $\{ a,c
\}$ and $\{ b,d \}$ were $\gamma$-disconnected, then it would follow
that either we have $a,c \in Y$ and $b,d \in Z$, or we have $a,c \in
Z$ and $b,d \in Y$; but this comes in contradiction with Remark
\ref{remark4.4}.3. In the remaining part of the proof we will assume
that $\{ b,d \}$ is $\gamma$-connected (the discussion based on the
assumption ``$\{ a,c \}$ is $\gamma$-connected'' would go in the
same way).

Let us next record that the six elements $a,b,c,d,-b,-d$ of $X$ are
distinct from each other. Indeed, we have that $a,b,c,d$ are
distinct elements of $A$, while $-b,-d$ are distinct elements of
$-A$, and Remark \ref{remark4.2} implies that $A \cap (-A) =
\emptyset$ (we use here the fact that $A$ is $\gamma$-connected,
which holds because $A \supseteq \{ b,d \}$).

From the second equality stated in (\ref{eqn:4.132}) and the fact
that $\sigma \in B_n$ it is immediate that
\[
\sigma \induc \{ a,b,c,d,-b,-d \} = (a,c)(b,d)(-b,-d),
\]
while on the other hand we see that
\begin{align*}
\lambda_{-b,-d} \induc \{ a,b,c,d \}
& = ( \lambda_{-b,-d} \induc A) \induc \{ a,b,c,d \}    \\
& = \mu_A \induc \{ a,b,c,d \} \ \ \mbox{ (by Equation
      (\ref{eqn:4.6}) in Remark \ref{remark4.4}.1)}     \\
& = (a,b,c,d).
\end{align*}
Hence $\sigma$ displays the crossing pattern (AC-3) with respect to
$\gamma$ -- contradiction.
\end{proof}

\begin{lemma}   \label{lemma4.14}
Let $B$ and $C$ be sets in $\ob (p,q)$ such that $B= -B \subseteq Y$
and $C= -C \subseteq Z$. We denote $B \cup C =: A$. Suppose that
$\sigma$ is a permutation in $\sncb (p,q)$ such that $A$ is a union
of orbits of $\sigma$. Then $\sigma \induc A \in \snc (A, \gamma
\induc A)$.
\end{lemma}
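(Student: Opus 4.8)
The plan is to verify the asserted membership through the combinatorial characterization of Proposition \ref{prop:2.9}, applied to the annulus built on $A$ with reference permutation $\delta := \gamma \induc A$. The starting observation is that $\delta$ has exactly two orbits, namely $B = A \cap Y$ and $C = A \cap Z$, with $\delta \induc B = \gamma \induc B$ and $\delta \induc C = \gamma \induc C$; so we are genuinely in the $\#(\gamma)=2$ framework of subsection 2.5, the two marked circles being $B$ and $C$. A second, purely formal, remark is that since $A$ is a union of orbits of $\sigma$, the permutation $\sigma \induc A$ is just the restriction $\sigma|_A$; in particular its orbits are the orbits $D$ of $\sigma$ that lie in $A$, and for every such $D$ one has $(\sigma \induc A)\induc D = \sigma \induc D = \mu_D$. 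I will use repeatedly the transitivity of the induced-permutation operation, $(\tau \induc S)\induc S' = \tau \induc S'$ for $S' \subseteq S$, which was already exploited in the proof of Lemma \ref{lemma4.3}.

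The technical heart of the argument is the identity
\[
\lambda_{y,z} \induc A \;=\; \lambda^{A}_{y,z}, \qquad y \in B,\; z \in C,
\]
where $\lambda_{y,z}$ is the AC-test permutation for $(X,\gamma)$ and $\lambda^{A}_{y,z}$ is the AC-test permutation for $(A,\delta)$. Both permutations fix $y$ and $z$ and are single cycles supported on $A \setminus \{y,z\}$; one checks that each runs first through $B \setminus \{y\}$ and then through $C \setminus \{z\}$, each portion in the cyclic order dictated by $\gamma$, by comparing the two cyclic words and verifying the two ``transition'' steps (the last element of $B\setminus\{y\}$ handing off to the first element of $C\setminus\{z\}$, and the wrap-around). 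I expect this bookkeeping with the AC-test permutations to be the main obstacle; once it is in place, everything else reduces to it together with transitivity.

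With these two facts available, compatibility of $\sigma \induc A$ with $\delta$ is checked orbit by orbit. For an orbit $D$ of $\sigma \induc A$, condition (i) of Definition \ref{def:2.8}.1 holds because
$(\sigma \induc A)\induc (D \cap B) = \mu_D \induc (D \cap Y) = \gamma \induc (D \cap Y) = \delta \induc (D \cap B)$,
where the middle equality uses Equation (\ref{eqn:4.7}) of Remark \ref{remark4.4}.2 when $D$ is $\gamma$-connected and the formula $\mu_D = \gamma \induc D$ of Remark \ref{remark4.4}.1 when $D$ is $\gamma$-disconnected; the statement for $C$ is symmetric. Condition (ii) is literally condition (ii) of the compatibility of $\sigma$ with $\gamma$ restricted to the orbit $D$, because on $D$ the map $\sigma \induc A$ agrees with $\sigma$, and $B$ and $C$ are just the traces of $Y$ and $Z$ on $A$; it therefore holds since $\sigma \in \snc(X,\gamma)$.

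Finally, to exclude the patterns (AC-1), (AC-2), (AC-3) for $\sigma \induc A$ with respect to $\delta$, I argue by contradiction and transport each hypothetical pattern back to $(\sigma,\gamma)$. For any $S \subseteq A$ transitivity gives $(\sigma \induc A)\induc S = \sigma \induc S$ and $\delta \induc S = \gamma \induc S$, while the AC-test identity above gives $\lambda^{A}_{y,z}\induc S = \lambda_{y,z}\induc S$. Substituting these equalities into the defining equations of (AC-1), (AC-2), (AC-3) converts an occurrence of any such pattern for $(\sigma \induc A,\delta)$ into an occurrence of the same pattern for $(\sigma,\gamma)$ — for (AC-2) and (AC-3) one also notes that $y \in B \subseteq Y$ and $z \in C \subseteq Z$, as required there — contradicting $\sigma \in \snc(X,\gamma)$ via Proposition \ref{prop:2.9}. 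Both clauses of the characterization being verified, we conclude $\sigma \induc A \in \snc(A,\gamma \induc A)$.
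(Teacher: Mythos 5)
Your proof is correct and takes essentially the same route as the paper's: both verify the two clauses of Proposition \ref{prop:2.9} for $\sigma \induc A$ relative to $\gamma \induc A$, obtaining compatibility by restricting the compatibility conditions that $\sigma$ satisfies with respect to $\gamma$, and excluding the crossing patterns by transporting any occurrence back to $(\sigma , \gamma )$ via the key identity that the AC-test permutation for $(A, \gamma \induc A)$ coincides with $\lambda_{y,z} \induc A$ --- exactly the observation the paper records at the end of its proof. The only cosmetic difference is that you check condition (i) by passing through the canonical permutations $\mu_D$ and Remark \ref{remark4.4}, whereas the paper restricts $\sigma$'s condition (i) directly using $U \cap B \subseteq U \cap Y$; this changes nothing essential.
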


\begin{proof}
The permutation $\gamma \induc A$ has exactly two orbits, namely $B$
and $C$. We will prove that $\sigma \induc A \in  \snc (A , \gamma
\induc A )$ by using the description of $\snc (A, \gamma \induc A)$
in terms of annular crossing patterns, as reviewed in Definition
\ref{def:2.8} and Proposition \ref{prop:2.9}.

Let us first look at the verification that $\sigma \induc A$ is
compatible with $\gamma \induc A$. Here we have to check that every
orbit $U$ of $\sigma \induc A$ satisfies the conditions (i)+(ii) of
Definition \ref{def:2.8}.1, in the appropriate reformulation where
$Y$ and $Z$ are replaced by $B$ and $C$. And indeed, these
reformulated conditions (i)+(ii) are immediate consequences of the
corresponding conditions (i)+(ii) satisfied by $\sigma \in \snc (X,
\gamma )$, and where we use the same $U$. In order to illustrate
what happens, let us work out for instance the condition (i). In the
reformulation for $\sigma \induc A$, this condition has the form
\[
\mbox{``$( \sigma \induc A ) \induc (U \cap B ) = ( \gamma \induc A
) \induc (U \cap B )$'',}
\]
where $U$ is an orbit of $\sigma$ such that $U \subseteq A$. So we
are required to check that $\sigma$ and $\gamma$ induce the same
permutation on $U \cap B$. But the corresponding condition which we
know to be satisfied by $\sigma$ is that $\sigma \induc (U \cap Y )
= \gamma \induc (U \cap Y )$, and this does indeed imply that
$\sigma \induc (U \cap B ) = \gamma \induc (U \cap B )$, since $U
\cap Y \supseteq U \cap B$.

The verification that $\sigma \induc A$ does not display any of the
annular crossing patterns (AC-1), (AC-2), (AC-3) with respect to
$\gamma \induc A$ goes along the same lines as in the preceding
paragraph. That is, if $\sigma \induc A$ displayed a crossing
pattern (AC-$i$) with respect to $\gamma \induc A$ (where $1 \leq i
\leq 3$), then the same set of 4, 5 or 6 points of $A$ could be used
to infer that $\sigma$ displays the crossing pattern (AC-$i$) with
respect to $\gamma$. The straightforward verification of this fact
is left to the reader. We only note here that when treating the
crossing patterns (AC-2) and (AC-3) one has to take into account the
following simple observation: if $b \in B$, $c \in C$, and
$\lambda_{b,c} \in \cS (X)$ is the AC-test permutation defined as in
Equation (\ref{eqn:2.11}) of Notation \ref{def:2.7}, then the
counterpart of $\lambda_{b,c}$ in connection to $\snc (A, \gamma
\induc A )$ coincides with $\lambda_{b,c} \induc A$.
\end{proof}

\begin{prop}  \label{proposition4.15}
Let $\sigma , \tau \in \sncb (p,q)$ be such that $\Otilda ( \sigma )
\leq \Otilda ( \tau )$. Then $\sigma \leq \tau$ in $B_n$.
\end{prop}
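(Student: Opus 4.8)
The plan is to verify $\sigma \le \tau$ directly from the definition (\ref{eqn:1.01}), i.e.\ to prove the length additivity $\lgb ( \tau ) = \lgb ( \sigma ) + \lgb ( \sigma^{-1} \tau )$, by a local-to-global argument over the blocks of $\Otilda ( \tau )$. These blocks are the non-inversion-invariant orbits of $\tau$, which come in pairs $A, -A$, together with at most one inversion-invariant block $W$ (the union of the zero-blocks of $\tau$). First I would use the hypothesis $\Otilda ( \sigma ) \le \Otilda ( \tau )$ to show that each such $A$, and also $W$, is a union of \emph{orbits} of $\sigma$. Indeed each is a union of blocks of $\Otilda ( \sigma )$; an inversion-invariant block of $\Otilda ( \sigma )$ cannot be contained in a non-inversion-invariant $A$ (it would have to lie in $A \cap (-A) = \emptyset$), so $A$ is a union of ordinary $\sigma$-orbits, while $W$ is a union of $\Otilda ( \sigma )$-blocks and hence again of $\sigma$-orbits. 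Consequently each of the sets $A \cup (-A)$ and $W$ is inversion-invariant and is preserved by $\sigma$, $\tau$ and $\sigma^{-1} \tau$, and these sets partition $X$.

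The second ingredient is that $\lgb$ is additive over such a decomposition: if $X = \bigsqcup_i X_i$ with each $X_i$ inversion-invariant and preserved by $\phi \in B_n$, then every orbit of $\phi$ lies in a single $X_i$ and is inversion-invariant in $X$ precisely when it is so inside $X_i$, so formula (\ref{eqn:2.02}) gives $\lgb ( \phi ) = \sum_i \ell_{B_{X_i}} ( \phi \induc X_i )$, where $\ell_{B_{X_i}}$ is the length on the hyperoctahedral group $B_{X_i}$ of $X_i$. Applying this to $\sigma$, $\tau$ and $\sigma^{-1} \tau$ for the decomposition above, and noting that $( \sigma^{-1} \tau ) \induc P = ( \sigma \induc P )^{-1} ( \tau \induc P )$ whenever $P$ is invariant, the global additivity reduces to proving the local relation $\sigma \induc P \le \tau \induc P$ in $B_P$ for each piece $P$.

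For a pair-piece $P = A \cup (-A)$ with $A$ a non-inversion-invariant orbit of $\tau$, the induced permutation $\tau \induc A = \mu_A$ is a single cycle, and Lemma \ref{lemma4.13} gives $\sigma \induc A \in \snc ( A , \mu_A )$. Since $\# ( \mu_A ) = 1$, the defining formula (\ref{eqn:2.06}) yields $\# ( \sigma \induc A ) + \# ( ( \sigma \induc A )^{-1} \mu_A ) = |A| + 1$. On the other hand, an element $\phi \in B_P$ preserving $A$ has its orbits arranged in mirror pairs, none inversion-invariant, so by (\ref{eqn:2.02}) it has $B_P$-length $|A| - \# ( \phi \induc A )$; together with $\# ( \tau \induc A ) = 1$ this converts the cycle-count identity into exactly the local additivity $\ell_{B_P} ( \tau \induc P ) = \ell_{B_P} ( \sigma \induc P ) + \ell_{B_P} ( ( \sigma^{-1} \tau ) \induc P )$, i.e.\ $\sigma \induc P \le \tau \induc P$.

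The main obstacle is the zero-block piece $P = W$, where the reference $\tau \induc W = \gamma \induc W$ (Lemma \ref{lemma4.3}) has two orbits. When $W$ is a single inversion-invariant orbit of $\tau$, Lemma \ref{lemma4.13} places $\sigma \induc W$ in the disc-type set $\snc ( W , \gamma \induc W )$, and the Framework-I result (\ref{eqn:1.07}) gives $\sigma \induc W \le \gamma \induc W$ in $B_W$. When instead $W = Z_1 \cup Z_2$ with $Z_1 = -Z_1 \subseteq Y$ and $Z_2 = -Z_2 \subseteq Z$, Lemma \ref{lemma4.14} gives $\sigma \induc W \in \snc ( W , \gamma \induc W )$; here $( W , \gamma \induc W )$ is itself a smaller instance of the annular type-B data, since each $\gamma \induc Z_i$ is an inversion-invariant single cycle and hence a full $B$-cycle, so Theorem \ref{theorem1} applies and again yields $\sigma \induc W \le \gamma \induc W = \tau \induc W$ in $B_W$. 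Summing the local additivities over all pieces then gives $\lgb ( \tau ) = \lgb ( \sigma ) + \lgb ( \sigma^{-1} \tau )$, which is the desired conclusion $\sigma \le \tau$.
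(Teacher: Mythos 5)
Your proof is correct, but it takes a genuinely different route from the paper's. The paper argues by a three-way case split on the global structure of $\sigma$ and $\tau$ (both $\gamma$-disconnected; $\tau$ without inversion-invariant orbits; the mixed case), in each case summing genus-formula cycle counts over the orbits of $\tau$ and converting to $\lgb$ only at the very end; the delicate point is its mixed case, where one must know that $\sigma \induc A_o$ is $( \tau \induc A_o )$-connected so that the count for the block $A_o$ is $|A_o|$ rather than $|A_o|+2$. You instead run a uniform local-to-global argument: split $X$ into the inversion-invariant, $\sigma$- and $\tau$-invariant pieces $A \cup (-A)$ and $W$, use additivity of $\lgb$ over such a decomposition, and prove $\sigma \induc P \leq \tau \induc P$ in each local group $B_P$. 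On the pair pieces your computation is essentially the paper's Case 2 localized; the real divergence is at $W$, where instead of counting you invoke the disc-case result (\ref{eqn:1.07}) and, in the two-zero-block case, Theorem \ref{theorem1} itself applied to the smaller annular data $(W, \gamma \induc W)$ --- legitimate, since Theorem \ref{theorem1} is proved in Section 3 independently of Section 4. Your approach buys uniformity (no case analysis on $\gamma$-connectedness of $\sigma$ versus $\tau$, no connectedness correction); the paper's buys self-containedness, never re-embedding a sub-problem as a smaller type B instance. Two tacit points should be made explicit: (i) that $W$ is either a single zero-block of $\tau$ or the union of exactly two, one inside $Y$ and one inside $Z$ --- without this Lemma \ref{lemma4.14} need not apply; this is precisely the content of Lemma \ref{lemma4.8}, so a citation suffices; and (ii) that the reference data on $W$ really is standard type B data, which needs the small observation that a full-length cycle $c \in B_W$ on an inversion-invariant set $W$ with $|W|=2k$ must satisfy $c^k(w) = -w$, hence has the form $(w_1, \ldots , w_k, -w_1, \ldots , -w_k)$. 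Both gaps are minor and easily filled.
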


\begin{proof}
We will distinguish three cases.

{\em Case 1.} Both $\sigma$ and $\tau$ are $\gamma$-disconnected.

In this case each of $\sigma$ and $\tau$ is completely determined by
its restrictions to $Y$ and to $Z$. Let $B_Y$ and $B_Z$ be the Weyl
groups of type B defined as in Proposition \ref{prop:3.2}. It is
immediate that the required inequality $\sigma \leq \tau$ in $B_n$
will follow if we can prove that $\sigma \induc Y \leq \tau \induc
Y$ in $B_Y$ and $\sigma \induc Z \leq \tau \induc Z$ in $B_Z$.

Now, from the hypothesis that $\Otilda ( \sigma ) \leq \Otilda (
\tau )$ it follows that $\cO ( \sigma \induc Y ) \leq \cO ( \tau
\induc Y )$, since the blocks of $\sigma \induc Y$ (respectively
$\tau \induc Y$) are obtained by intersecting the blocks of $\Otilda
( \sigma )$ (respectively the blocks of $\cO ( \tau )$) with $Y$.
But Proposition \ref{prop:3.2} gives us that $\sigma \induc Y , \tau
\induc Y \in B_Y \simeq B_p$; so if we know that $\cO ( \sigma
\induc Y ) \leq \cO ( \tau \induc Y )$, then we can invoke the poset
isomorphism reviewed in (\ref{eqn:1.08}) of subsection 1.2 to
conclude that $\sigma \induc Y \leq \tau \induc Y$ in $B_Y$. The
inequality $\sigma \induc Z \leq \tau \induc Z$ in $B_Z$ is obtained
in a similar manner.

\vspace{10pt}

{\em Case 2.} $\tau$ has no inversion-invariant orbits.

In this case $\sigma$ cannot have inversion-invariant orbits either.
We have $\Otilda ( \sigma ) = \cO ( \sigma )$ and $\Otilda ( \tau )
= \cO ( \tau )$, thus our hypothesis is that $\cO ( \sigma ) \leq
\cO ( \tau )$.

Let $A$ be an orbit of $\tau$. Then $A$ is a union of orbits of
$\sigma$, and Lemma \ref{lemma4.13} gives us that $\sigma \induc A
\in \snc ( A, \tau \induc A )$. Observe that
\[
( \sigma \induc A)^{-1} ( \tau \induc A) = ( \sigma^{-1} \tau )
\induc A,
\]
thus Equation (\ref{eqn:2.06}) from subsection 2.4 gives us that
\begin{equation}  \label{eqn:4.12}
\# ( \sigma \induc A) + \# (( \sigma^{-1} \tau ) \induc A) = 1 +
|A|.
\end{equation}

In Equation (\ref{eqn:4.12}) let us sum over all orbits $A$ of
$\tau$, where we take into account that every orbit of $\sigma$ is
contained in precisely one orbit of $\tau$, and that (consequently)
the same is true for every orbit of $\sigma^{-1} \tau$. We get
\begin{equation}  \label{eqn:4.13}
\# ( \sigma ) \ + \ \# ( \sigma^{-1} \tau ) \ = \ \# ( \tau ) + 2n.
\end{equation}

Finally, we convert Equation (\ref{eqn:4.13}) into a formula which
involves lengths in $B_n$. If a permutation $\phi \in B_n$ has no
inversion-invariant orbits, then the relation between the length
$\lgb ( \phi )$ and the number of cycles $\# ( \phi )$ is
\begin{equation}  \label{eqn:4.14}
\# ( \phi ) = 2( n - \lgb ( \phi )).
\end{equation}
This formula applies to each of $\sigma, \sigma^{-1} \tau$ and
$\tau$ (where in the case of $\sigma^{-1} \tau$, the absence of
inversion-invariant orbits follows from the inequality $\cO (
\sigma^{-1} \tau ) \leq \cO ( \tau )$). By substituting this into
(\ref{eqn:4.13}) we get precisely that $\lgb ( \sigma ) + \lgb (
\sigma^{-1} \tau )$ = $\lgb ( \tau)$, and the required inequality
$\sigma \leq \tau$ follows.

\vspace{10pt}

{\em Case 3.} $\sigma$ and $\tau$ are neither as in Case 1 nor as in
Case 2.

In this case $\tau$ must have inversion-invariant orbits (otherwise
Case 2 would apply). Proposition \ref{prop:3.4} thus implies that
$\tau$ is $\gamma$-disconnected. But then $\sigma$ has to be
$\gamma$-connected, otherwise Case 1 would apply. From the given
inequality $\Otilda ( \sigma ) \leq \Otilda ( \tau )$ and the fact
that $\sigma$ is $\gamma$-connected we next infer that the partition
$\Otilda ( \tau )$ is $\gamma$-connected.

In the preceding paragraph we saw that $\tau$ is
$\gamma$-disconnected, but the partition $\Otilda ( \tau )$ is
$\gamma$-connected. The only way this can happen is if $\tau$ has
exactly two inversion-invariant orbits, an orbit $B=-B \subseteq Y$
and an orbit $C=-C \subseteq Z$. Then, denoting $B \cup C =: A_o$,
we have that $A_o$ is the unique $\gamma$-connected block of
$\Otilda ( \tau )$ (while all the other blocks of $\Otilda ( \tau )$
are actual orbits of $\tau$, and each of them is either contained in
$Y$ or contained in $Z$). In the preceding paragraph we also saw
that $\sigma$ is $\gamma$-connected; note that, due to the
inequality $\Omega ( \sigma ) \leq \Otilda ( \tau )$, all the
$\gamma$-connected orbits of $\sigma$ must be contained in $A_o$.

We now start to count orbits of $\sigma$ and of $\sigma^{-1} \tau$,
in the same way as we did in Case 2. For every orbit $A$ of $\tau$
such that $A \neq B,C$ we have that $A$ is a union of orbits of
$\sigma$ and we can do exactly the same calculation as shown in Case
2. We obtain, analogously to Equation (\ref{eqn:4.12}) from Case 2,
that
\begin{equation}  \label{eqn:4.15}
\# ( \sigma \induc A) + \# (( \sigma^{-1} \tau ) \induc A) = 1+|A|,
\ \ \mbox{$\forall \, A$ orbit of $\tau$, $A \neq B,C$.}
\end{equation}

On the other hand, $A_o = B \cup C$ also is a union of orbits of
$\sigma$. Lemma \ref{lemma4.14} applies to this situation, and gives
us that $\sigma \induc A_o \in \snc (A_o, \gamma \induc A_o )$. It
is convenient to replace here $\gamma \induc A_o$ by $\tau \induc
A_o$ (the equality $\gamma \induc A_o = \tau \induc A_o$ is the
combination of the two equalities $\gamma \induc B = \tau \induc B$
and $\gamma \induc C = \tau \induc C$, which hold because $\tau$ is
compatible with $\gamma$, in the sense of Definition \ref{def:2.8}).
So we obtain that $\sigma \induc A_o \in \snc (A_o, \tau \induc A_o
)$, and the genus formula for $\sigma \induc A_o$ and $\tau \induc
A_o$ gives us that
\begin{equation}  \label{eqn:4.16}
\# ( \sigma \induc A_o)+ \#( ( \sigma^{-1} \tau ) \induc A_o)=|A_o|.
\end{equation}
(On the right-hand side of (\ref{eqn:4.16}) we used $|A_o|$ rather
than ``$|A_o|+2$'' because we know that $\sigma \induc A_o$ is $(
\tau \induc A_o )$-connected. The latter fact is in fact a
consequence of the fact $\sigma$ has $\gamma$-connected blocks which
are contained in $A_o$.)

Let us now sum in Equation (\ref{eqn:4.15}) over all the orbits $A
\neq B,C$ of $\tau$, and let us also add Equation (\ref{eqn:4.16})
to the result of that summation. We get (analogously to Equation
(\ref{eqn:4.13}) from Case 2) that
\begin{equation}  \label{eqn:4.17}
\# ( \sigma ) \ + \ \# ( \sigma^{-1} \tau ) \ = \ \Bigl( \# ( \tau )
-2  \Bigr) + 2n.
\end{equation}

Finally, we convert Equation (\ref{eqn:4.17}) into a formula which
involves lengths in $B_n$. We leave it as an exercise to the reader
to verify that the permutations $\sigma$ and $\sigma^{-1} \tau$ do
not have inversion-invariant orbits (the verification has only one
non-trivial point, namely the absence of inversion-invariant orbits
of $( \sigma^{-1} \tau ) \induc A_o$, which is obtained by applying
a ``re-denoted'' version of Proposition \ref{prop:3.4} to the
permutation $( \sigma^{-1} \tau ) \induc A_o \in \snc (A_o, \tau
\induc A_o)$). Hence the conversion from $\# ( \sigma )$ and $\#
(\sigma^{-1} \tau )$ to the lengths $\lgb ( \sigma )$ and $\lgb (
\sigma^{-1} \tau )$ is done via the same formula (\ref{eqn:4.14}) as
we used in Case 2. The permutation $\tau$ has two
inversion-invariant orbits, hence the formula used for $\tau$ has to
be
\[
\# ( \tau ) \ = \ 2 ( n - \lgb  ( \tau ) + 1 ).
\]
When we use these formulas in order to rewrite Equation
(\ref{eqn:4.17}) in terms of lengths in $B_n$, we get that $\lgb (
\sigma ) + \lgb ( \sigma^{-1} \tau )$ = $\lgb ( \tau )$, and the
required inequality $\sigma \leq \tau$ is obtained in this case as
well.
\end{proof}

Finally, it is clear that Theorem \ref{theorem2} now follows, when
we combine the statements of Proposition \ref{prop:4.9} and of
Proposition \ref{proposition4.15}.

$\ $

\begin{center}
{\bf\large 5. Intersection meets for partitions in \boldmath{$\ncb
(p,q)$} }
\end{center}
\setcounter{section}{5} \setcounter{equation}{0} \setcounter{thm}{0}
\setcounter{subsection}{0}

\noindent In this section we continue to use the framework and
notations from Sections 3 and 4.

We are dealing with $\ncb (p,q)$, which is a set of partitions of $X
= \{ 1, \ldots ,n \} \cup \{ -1, \ldots , -n \}$, for $n = p+q$. For
any partitions $\pi , \rho$ of $X$ we will use the notation $\pi
\wedge \rho$ to refer to the {\bf intersection meet} of $\pi$ and
$\rho$; that is, $\pi \wedge \rho$ is the partition of $X$ into
blocks of the form $A \cap B$ where $A$ is a block of $\pi$, $B$ is
a block of $\rho$, and $A \cap B \neq \emptyset$. It is immediate
that $\pi \wedge \rho$ is the meet (greatest common lower bound) for
$\pi$ and $\rho$ in the lattice $\cP (X)$ of all partitions of $X$.

In connection to the notation $\pi \wedge \rho$, we emphasize that
the implication
\[
\pi, \rho \in \ncb (p,q) \ \ \mbox{?} \Longrightarrow \mbox{?} \ \
\pi \wedge \rho \in \ncb (p,q)
\]
is not true in general. And in fact, while $\ncb (p,q)$ is always a
ranked poset with partial order given by reverse refinement, it
isn't generally true that $\ncb (p,q)$ is a lattice with respect to
this partial order. In the present section we look at the following
question: if $\pi, \rho \in \ncb (p,q)$ and if it is to be that $\pi
\wedge \rho \not\in \ncb (p,q)$, then how exactly can this happen?

\subsection{The case when \boldmath{$\pi \wedge \rho$} is
\boldmath{$\gamma$}-disconnected}

\begin{defn}  \label{definition5.1}
Let $\theta$ be a partition in $\ncb (p)$, and let $\omega$ be a
partition in $\ncb (q)$. We define a partition $\pi$ of $X$ which
will be denoted by ``$\Phi ( \theta , \omega )$'', and is described
as follows.

(i) Whenever $A$ is a block of $\theta$ such that $A \neq -A$, we
take $A$ to be a block of $\pi$.

(ii) Whenever $B$ is a block of $\omega$ such that $B \neq -B$, we
take $B'$ to be a block of $\pi$, where
\[
B' := \{ b+p \mid b \in B, \ b>0 \} \cup \{ b-p \mid b \in B, \ b<0
\} \subseteq \{ p+1, \ldots , n \} \cup \{ -(p+1), \ldots , -n \}
\subseteq X.
\]

(iii) Let $U \subseteq X$ be the union of all the blocks of $\pi$
considered in (i) and (ii) above. If $U \neq X$, then we take $X
\setminus U$ to be a block of $\pi$.
\end{defn}

\begin{rem}  \label{remark5.2}
Let $\theta , \omega$ and $\pi = \Phi ( \theta , \omega )$ be as
above.

$1^o$ It is clear that if $M$ is a block of $\pi$, then $-M$ is a
block of $\pi$ as well. It is also clear that $\pi$ can have at most
one inversion-invariant block $M$, namely the block $X \setminus U$
from (iii) of Definition \ref{definition5.1} (if it is the case that
$U \neq X$). A moment's thought shows that the construction of $\pi$
can be succinctly described as follows: ``Every block of $\theta$
and every block of $\omega$ is identified to a subset of $X$, in the
natural way; this gives a partition $\pi_o$ of $X$. Then $\pi$ is
obtained from $\pi_o$ by joining together all the
inversion-invariant blocks of $\pi_o$ (if such blocks exist) into
one block of $\pi$''.

$2^o$ Let $B_X$ and $B_Y$ be the Weyl groups of type B considered in
the proof of Proposition \ref{prop:3.2}, and let us also follow
Proposition \ref{prop:3.2} in denoting $\alpha := \gamma \induc Y
\in B_Y$ and $\beta  := \gamma \induc Z \in B_Z$. We then have
canonical identifications
\begin{equation}  \label{eqn:5.1}
\snc (Y, \alpha ) \cap B_Y = \sncb (p) \simeq \ncb (p) \mbox{ and }
\snc (Z, \beta ) \cap B_Z \simeq \sncb (q) \simeq \ncb (q)
\end{equation}
(where the isomorphisms $\sncb (p) \simeq \ncb (p)$ and $\sncb (q)
\simeq \ncb (q)$ are as in Equation (\ref{eqn:1.08}) of Remark
\ref{rem:1.6}). By using these canonical identifications, the
construction of the partition $\pi = \Phi ( \theta , \omega )$ can
also be described as follows. We identify $\theta$ and $\omega$ with
permutations from $\snc (Y, \alpha ) \cap B_Y$ and respectively from
$\snc (Z, \beta  ) \cap B_Z$, in the canonical way from
(\ref{eqn:5.1}). The two permutations so obtained (one of $Y$ and
one of $Z$) can be combined together into one permutation $\tau$ of
$X$; note that, by Proposition \ref{prop:3.2}, $\tau$ is a
$\gamma$-disconnected permutation in $\sncb (p,q)$. Then $\pi$ can
be defined as being the partition $\Otilda ( \tau )$ for this
particular $\tau \in \sncb (p,q)$.
\end{rem}

\begin{prop}  \label{proposition5.3}
$1^o$ For every $\theta \in \ncb (p)$ and $\omega \in \ncb (q)$, the
partition $\Phi ( \theta , \omega )$ defined above belongs to $\ncb
(p,q)$.

$2^o$ The map $\Phi : \ncb (p) \times \ncb (q) \to \ncb (p,q)$ is
injective, and its range-set can be described as $\{ \Otilda ( \tau
) \mid \tau \in \sncb (p,q), \ \tau \mbox{ is $\gamma$-disconnected}
\}$.
\end{prop}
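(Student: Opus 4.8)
The plan is to build everything on Remark \ref{remark5.2}.$2^o$, which already realizes $\Phi ( \theta , \omega )$ as $\Otilda ( \tau )$ for a specific $\gamma$-disconnected permutation $\tau \in \sncb (p,q)$. First I would set up, once and for all, the correspondence $( \theta , \omega ) \mapsto \tau_{\theta , \omega}$ described there: using the identifications $\ncb (p) \simeq \snc (Y, \alpha ) \cap B_Y$ and $\ncb (q) \simeq \snc (Z, \beta ) \cap B_Z$ from (\ref{eqn:5.1}), identify $\theta$ and $\omega$ with permutations of $Y$ and of $Z$, and combine them into a single permutation $\tau_{\theta , \omega}$ of $X$ whose restrictions $\tau_{\theta,\omega} \induc Y$ and $\tau_{\theta,\omega} \induc Z$ are exactly these two permutations. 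By the equivalence $(1) \Leftrightarrow (2)$ in Proposition \ref{prop:3.2}, $\tau_{\theta , \omega}$ lies in $\snc (X, \gamma ) \cap B_n = \sncb (p,q)$, and it is visibly $\gamma$-disconnected since each of its orbits is contained in $Y$ or in $Z$.

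The key structural claim I would isolate is that $( \theta , \omega ) \mapsto \tau_{\theta , \omega}$ is a bijection from $\ncb (p) \times \ncb (q)$ onto the set of $\gamma$-disconnected permutations in $\sncb (p,q)$. Injectivity is clear, since $\tau_{\theta , \omega}$ determines $\tau_{\theta,\omega} \induc Y$ and $\tau_{\theta,\omega} \induc Z$, hence $\theta$ and $\omega$. For surjectivity, given any $\gamma$-disconnected $\tau \in \sncb (p,q)$, Proposition \ref{prop:3.2} gives $\tau \induc Y \in \snc (Y, \alpha ) \cap B_Y$ and $\tau \induc Z \in \snc (Z, \beta ) \cap B_Z$, which pull back through (\ref{eqn:5.1}) to a pair $( \theta , \omega )$ with $\tau_{\theta , \omega} = \tau$. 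With this in hand, Part $1^o$ is immediate: $\Phi ( \theta , \omega ) = \Otilda ( \tau_{\theta , \omega} )$ with $\tau_{\theta , \omega} \in \sncb (p,q)$, so it belongs to $\ncb (p,q)$ by the defining Equation (\ref{eqn:1.05}).

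For Part $2^o$ I would factor $\Phi$ as the composition of the bijection $( \theta , \omega ) \mapsto \tau_{\theta , \omega}$ with the map $\tau \mapsto \Otilda ( \tau )$. The latter is one-to-one on $\sncb (p,q)$ by Proposition \ref{prop:4.9}, so the composition is injective. The range description then falls out of the surjectivity above: as $( \theta , \omega )$ ranges over $\ncb (p) \times \ncb (q)$, the permutation $\tau_{\theta , \omega}$ ranges over exactly the $\gamma$-disconnected elements of $\sncb (p,q)$, and therefore $\Phi ( \theta , \omega ) = \Otilda ( \tau_{\theta , \omega} )$ ranges over exactly $\{ \Otilda ( \tau ) \mid \tau \in \sncb (p,q), \ \tau \mbox{ $\gamma$-disconnected} \}$.

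The one point that genuinely needs care — and where I expect the real work to sit — is the verification underlying Remark \ref{remark5.2}.$2^o$ that the combinatorial recipe of Definition \ref{definition5.1} actually produces $\Otilda ( \tau_{\theta , \omega} )$, and in particular that step (iii) reproduces the block-merging built into $\Otilda$. Here I would note that a $\gamma$-disconnected $\tau \in \sncb (p,q)$ has at most two inversion-invariant orbits, at most one inside $Y$ and at most one inside $Z$ (this is the dichotomy used in Case 2 of the proof of Lemma \ref{lemma4.8}, resting on the disc-case fact that a type-B non-crossing partition has at most one zero-block). The operation $\Otilda$ fuses these into the single inversion-invariant block $X \setminus U$, which is precisely the block created in step (iii). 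Since Remark \ref{remark5.2} already records this identification, I would simply cite it, and the remainder of the argument is the bookkeeping laid out above.
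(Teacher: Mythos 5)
Your proposal is correct and follows essentially the same route as the paper: the paper's proof also rests entirely on the identification in Remark \ref{remark5.2}.$2^o$ of $\Phi ( \theta , \omega )$ with $\Otilda ( \tau )$ for a $\gamma$-disconnected $\tau \in \sncb (p,q)$, which (together with Proposition \ref{prop:3.2}) yields Part $1^o$ and the range description exactly as you argue. The only cosmetic difference is injectivity: the paper reads it off directly from Definition \ref{definition5.1} (one recovers $\theta$ and $\omega$ from the blocks of $\Phi ( \theta , \omega )$), whereas you obtain it by composing the injective map $( \theta , \omega ) \mapsto \tau_{\theta , \omega}$ with the map $\tau \mapsto \Otilda ( \tau )$, which is one-to-one on $\sncb (p,q)$ by Proposition \ref{prop:4.9} -- both arguments are valid.
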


\begin{proof} Part $1^o$ and the description of the range-set of
$\Phi$ in part $2^o$ follow from the description of $\Phi ( \theta ,
\omega )$ observed in Remark \ref{remark5.2}.2. The injectivity of
$\Phi$ is immediate from the description of $\Phi ( \theta , \omega
)$ given in Definition \ref{definition5.1}.
\end{proof}

\begin{cor}   \label{corollary5.4}
The subset $\{ \Otilda ( \tau ) \mid \tau \in \sncb (p,q), \ \tau
\mbox{ is $\gamma$-disconnected} \}$ of $\ncb (p,q)$ is closed under
the operation ``$\wedge$'' of intersection meet.
\end{cor}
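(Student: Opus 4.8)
The plan is to reduce everything to the map $\Phi$ and to the corresponding closure fact for the disc. By Proposition \ref{proposition5.3}.$2^o$, the subset in question is exactly the range of $\Phi : \ncb(p) \times \ncb(q) \to \ncb(p,q)$, so it suffices to show that $\Phi$ carries intersection meets to intersection meets: given $\theta_1,\theta_2 \in \ncb(p)$ and $\omega_1,\omega_2 \in \ncb(q)$, I would establish the identity
\[
\Phi(\theta_1,\omega_1) \wedge \Phi(\theta_2,\omega_2) \ = \ \Phi(\theta_1 \wedge \theta_2, \, \omega_1 \wedge \omega_2),
\]
where the meets on the right are intersection meets taken inside $\cP(Y)$ and $\cP(Z)$. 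Granting this, the corollary follows immediately: the disc analogue of Theorem \ref{theorem3} (the poset $\ncb(p)$ of Reiner \cite{R97} is closed under intersection meet, its meet being the restriction of the meet on all partitions, and likewise for $\ncb(q)$) shows that $\theta_1 \wedge \theta_2 \in \ncb(p)$ and $\omega_1 \wedge \omega_2 \in \ncb(q)$, so the right-hand side again lies in the range of $\Phi$, i.e. in the subset we are studying.

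To prove the displayed identity I would use the succinct description of $\Phi$ from Remark \ref{remark5.2}.$1^o$: the blocks of $\Phi(\theta,\omega)$ are the non-inversion-invariant blocks of $\theta$ (sitting inside $Y$), the non-inversion-invariant blocks of $\omega$ (sitting inside $Z$), together with the single joined block $Z_\theta \cup Z_\omega$ built from the zero-blocks $Z_\theta$ of $\theta$ and $Z_\omega$ of $\omega$ (whenever that union is non-empty). The argument is then a direct block-chase, organized by where a point $x \in X$ sits. If $x$ lies in honest (non-zero) blocks of both factors, the two $\Phi$-blocks through $x$ lie on the same side ($Y$ or $Z$) and their intersection is visibly a block of $\theta_1 \wedge \theta_2$ (or of $\omega_1 \wedge \omega_2$). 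If $x$ lies in a zero-block of one factor only, the straddling block $Z_\theta \cup Z_\omega$ meets the one-sided partner block only in its $Y$- (or $Z$-) part, again producing a non-inversion-invariant block of the meet. Finally, if $x$ lies in the zero-blocks of both factors, the two straddling blocks intersect in $(Z_{\theta_1}\cap Z_{\theta_2}) \cup (Z_{\omega_1}\cap Z_{\omega_2})$, which is precisely the joined zero-block of $\Phi(\theta_1\wedge\theta_2,\omega_1\wedge\omega_2)$. Matching these descriptions of the block lists on the two sides then gives the identity.

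The one point that requires care — and which I expect to be the only real obstacle — is the behaviour of the zero-blocks under intersection. First, the cross-terms in $(Z_{\theta_1}\cup Z_{\omega_1}) \cap (Z_{\theta_2}\cup Z_{\omega_2})$ vanish because $Y \cap Z = \emptyset$, which is exactly what makes the two straddling blocks intersect cleanly into a single new zero-block rather than into something that mixes the sides incorrectly. Second, one must verify that $Z_{\theta_1} \cap Z_{\theta_2}$ really is the \emph{unique} zero-block of $\theta_1 \wedge \theta_2$, with no extra inversion-invariant pieces splitting off. This rests on the elementary observation that an inversion-invariant block $M \cap N$ of an intersection meet forces $M = -M$ and $N = -N$ (otherwise $M \cap N \subseteq M \cap (-M) = \emptyset$), so the only candidate is the intersection of the two zero-blocks — this is just the ``at most one zero-block'' phenomenon of the disc case. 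Once these zero-block bookkeeping checks are settled, the remaining verification that the two block lists coincide is routine, and the corollary is proved.
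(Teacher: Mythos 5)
Your proposal is correct and takes essentially the same route as the paper: the paper also deduces Corollary \ref{corollary5.4} from Proposition \ref{proposition5.3} together with the identity $\Phi ( \theta_1 , \omega_1 ) \wedge \Phi ( \theta_2 , \omega_2 ) = \Phi ( \theta_1 \wedge \theta_2 , \, \omega_1 \wedge \omega_2 )$, which it describes as a straightforward verification made directly from Definition \ref{definition5.1}. Your block-by-block chase, including the zero-block bookkeeping and the implicit use of the disc-case fact that $\ncb (p)$ and $\ncb (q)$ are closed under intersection meets, just supplies the details that the paper leaves to the reader.
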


\begin{proof} This is immediate from Proposition
\ref{proposition5.3} and the straightforward verification (made
directly from Definition \ref{definition5.1}) that we have $\Phi (
\theta , \omega ) \wedge \Phi ( \theta ' , \omega ' )$ = $\Phi (
\theta \wedge \theta ' , \omega \wedge \omega ')$, for every $\theta
, \theta ' \in \ncb (p)$ and every $\omega , \omega ' \in \ncb (q)$.
\end{proof}

\begin{cor}   \label{corollary5.5}
Let $\pi , \rho$ be in $\ncb (p,q)$, and let us denote $\pi \wedge
\rho =: \nu$. If $\nu$ has inversion-invariant blocks, then $\nu \in
\ncb (p,q)$.
\end{cor}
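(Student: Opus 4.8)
The plan is to reduce everything to Corollary \ref{corollary5.4}. That corollary already asserts that the range of the map $\Phi$ --- equivalently, the set of partitions $\Otilda ( \tau )$ coming from $\gamma$-disconnected $\tau \in \sncb (p,q)$ --- is closed under the intersection meet. So it will suffice to prove that the hypothesis on $\nu = \pi \wedge \rho$ forces both $\pi$ and $\rho$ to lie in the range of $\Phi$; then $\nu = \pi \wedge \rho$ automatically lies in that range, and in particular $\nu \in \ncb (p,q)$, which is exactly what we want.

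The key step is a short block-chasing argument. Recall that every partition in $\ncb (p,q)$ is invariant under $M \mapsto -M$: by the construction of $\Otilda$ in Notation \ref{def:1.2}, its blocks come in pairs $A_i, -A_i$ together with at most one inversion-invariant block. Now I would let $M$ be an inversion-invariant block of $\nu$ and write $M = A \cap B$ with $A$ a block of $\pi$ and $B$ a block of $\rho$. Applying the inversion gives $M = -M = (-A) \cap (-B)$, where $-A$ is again a block of $\pi$ and $-B$ a block of $\rho$. Hence $\emptyset \neq M \subseteq A \cap (-A)$, and since the blocks $A$ and $-A$ of the partition $\pi$ are either equal or disjoint, we must have $A = -A$. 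The same reasoning gives $B = -B$. Thus the inversion-invariance of a single block of $\nu$ propagates to $\pi$ and $\rho$: each of them has an inversion-invariant block.

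It then remains to pass back to the underlying permutations. If $\pi = \Otilda ( \tau )$ with $\tau$ a $\gamma$-connected permutation in $\sncb (p,q)$, then Proposition \ref{prop:3.4} says $\tau$ has no inversion-invariant orbits, so $\Omega ( \tau )$ has no zero-block and $\Otilda ( \tau ) = \Omega ( \tau )$ carries no inversion-invariant block. Contrapositively, since $\pi$ does have one, the permutation realizing $\pi$ must be $\gamma$-disconnected; by Proposition \ref{proposition5.3}.2 this places $\pi$ in the range of $\Phi$, and the same argument does so for $\rho$. Corollary \ref{corollary5.4} then gives $\nu = \pi \wedge \rho$ in the range of $\Phi \subseteq \ncb (p,q)$, completing the proof. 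I do not anticipate a genuine obstacle: the one point that needs to be spotted is that inversion-invariance of a single block of $\nu$ transfers to $\pi$ and $\rho$ through the closure of $\ncb (p,q)$-partitions under $M \mapsto -M$; once that is in hand, the statement is a formal consequence of Proposition \ref{proposition5.3} and Corollary \ref{corollary5.4}.
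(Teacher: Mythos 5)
Your proposal is correct and follows essentially the same route as the paper's own proof: the same block-chasing argument showing that an inversion-invariant block of $\nu$ forces $\pi$ and $\rho$ to have inversion-invariant blocks (hence, via Proposition \ref{prop:3.4}, to come from $\gamma$-disconnected permutations), followed by the same appeal to Corollary \ref{corollary5.4} for closure of that subset under intersection meets. No gaps.
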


\begin{proof} Let $N$ be an inversion-invariant block of $\nu$,
and let us write $N = M \cap M'$ where $M$ is a block of $\pi$ and
$M'$ is a block of $\rho$. Then $M \cap (-M) \supseteq N$, hence $M
\cap (-M) \neq \emptyset$, and $M$ must be an inversion-invariant
block of $\pi$. Similarly, $M'$ must be an inversion-invariant block
of $\rho$. From Proposition \ref{prop:3.4} it follows that we must
have $\pi = \Otilda ( \tau )$ and $\rho = \Otilda ( \tau ')$ for
some $\gamma$-disconnected permutations $\tau ,  \tau ' \in \sncb
(p,q)$. But then Corollary \ref{corollary5.4} gives us that $\nu$
also is of the form $\Otilda ( \sigma )$ for some
$\gamma$-disconnected permutation $\sigma \in \sncb (p,q)$, and in
particular we find that $\nu \in \ncb (p,q)$.
\end{proof}

In the remaining part of this subsection we will prove another
statement going along the same lines as the above corollary, but
where the hypothesis on $\nu$ will be that it is
$\gamma$-disconnected. When doing that, it will come in handy to use
the following notation.

\vspace{10pt}

\begin{notation}  \label{definition5.6}
Let $\pi$ be a partition of $X$.

$1^o$ We will denote by $\Psi_1 ( \pi )$ the partition of $\{ 1,
\ldots , p \} \cup \{ -1, \ldots , -p \}$ into blocks of the form $A
= M \cap Y$, with $M$ a block of $\pi$ such that $M \cap Y \neq
\emptyset$.

$2^o$ We will denote by $\Psi_2 ( \pi )$ the partition of $\{ 1,
\ldots , q \} \cup \{ -1, \ldots , -q \}$ into blocks of the form
\[
B = \{ b-p \mid b \in M \cap Z, \ b>0 \} \cup \{ b+p \mid b \in M
\cap Z, \ b<0 \}
\]
with $M$ a block of $\pi$ such that $M \cap Z \neq \emptyset$.
\end{notation}

\begin{lemma}  \label{lemma5.7}
Let $\pi$ be a partition in $\ncb (p,q)$, and consider the
partitions $\theta := \Psi_1 ( \pi )$ and $\omega := \Psi_2 ( \pi )$
from the preceding notation. Then $\theta \in \ncb (p)$ and $\omega
\in \ncb (q)$.
\end{lemma}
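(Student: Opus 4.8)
The plan is to reduce the statement to the disc-type poset isomorphism quoted in Equation~(\ref{eqn:1.08}), by realizing $\theta$ and $\omega$ as orbit partitions of the permutations that $\tau$ induces on $Y$ and on $Z$. So I would start by fixing a permutation $\tau \in \sncb (p,q)$ with $\Otilda ( \tau ) = \pi$ (such a $\tau$ exists by the definition of $\ncb (p,q)$). The first step is the combinatorial identity
\[
\Psi_1 ( \pi ) = \cO ( \tau \induc Y ), \qquad \Psi_2 ( \pi ) = \cO ( \tau \induc Z ),
\]
where the second equality is read through the index shift $Z \to \{ 1, \ldots , q \} \cup \{ -1, \ldots , -q \}$. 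For this I would use the standard fact that the orbits of an induced permutation $\tau \induc Y$ are exactly the non-empty sets $A \cap Y$, with $A$ running over the orbits of $\tau$, and compare this with the block structure of $\Otilda ( \tau )$, whose blocks are the non-inversion-invariant orbits of $\tau$ together with the single block obtained by merging all zero-blocks. Intersecting the blocks of $\pi$ with $Y$ then returns precisely the orbits of $\tau \induc Y$, provided one knows that the merged zero-block meets $Y$ in a single zero-block; this last point is supplied by Lemma~\ref{lemma4.8} (together with Proposition~\ref{prop:3.4} and Remark~\ref{remark4.2}), which guarantees at most one inversion-invariant orbit of $\tau$ inside $Y$.

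The second, and main, step is to prove that $\tau \induc Y \in \sncb (p) = \snc (Y, \alpha ) \cap B_Y$, where $\alpha = \gamma \induc Y$; the argument for $\tau \induc Z$ is identical after the shift. Membership in $B_Y$ follows from $\tau \in B_n$ and the symmetry $Y = -Y$: since $\tau^k ( -y ) = - \tau^k ( y )$ and $-w \in Y \Leftrightarrow w \in Y$, the first return of $-y$ to $Y$ is the negative of the first return of $y$, so $( \tau \induc Y )( -y ) = - ( \tau \induc Y )( y )$. To place $\tau \induc Y$ in $\snc (Y, \alpha )$ I would invoke the disc characterization of Proposition~\ref{prop:2.5} and check its two requirements. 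Compatibility of $\tau \induc Y$ with $\alpha$ is immediate once the orbits of $\tau \induc Y$ are identified with the sets $A \cap Y$: on a $\gamma$-disconnected orbit $A \subseteq Y$ the compatibility of $\tau$ with $\gamma$ gives $\tau \induc A = \gamma \induc A = \alpha \induc A$, while on a $\gamma$-connected orbit $A$ the equality from Remark~\ref{remark4.4}.2, namely $\mu_A \induc (A \cap Y) = \gamma \induc (A \cap Y)$, yields exactly $( \tau \induc Y ) \induc (A \cap Y) = \alpha \induc (A \cap Y)$.

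The crux is the absence of the disc crossing pattern (DC). I would argue by contradiction: a (DC) pattern for $\tau \induc Y$ relative to $\alpha$ produces four distinct elements $a,b,c,d \in Y$ with $\alpha \induc \{ a,b,c,d \} = (a,b,c,d)$ and $( \tau \induc Y ) \induc \{ a,b,c,d \} = (a,c)(b,d)$. Since $\{ a,b,c,d \} \subseteq Y$, both induced permutations may be recomputed directly from $\gamma$ and $\tau$, because $\alpha \induc W = \gamma \induc W$ and $( \tau \induc Y ) \induc W = \tau \induc W$ for any $W \subseteq Y$. This gives $\gamma \induc \{ a,b,c,d \} = (a,b,c,d)$ and $\tau \induc \{ a,b,c,d \} = (a,c)(b,d)$, that is, $\tau$ would display the annular crossing pattern (AC-1) with respect to $\gamma$, contradicting $\tau \in \snc (X, \gamma )$.

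With that contradiction, $\tau \induc Y \in \sncb (p)$, and the poset isomorphism of Equation~(\ref{eqn:1.08}) gives $\theta = \cO ( \tau \induc Y ) \in \ncb (p)$; applying the same reasoning to $Z$ and $\beta$ (after the index shift) gives $\omega \in \ncb (q)$. I expect the main obstacle to be the bookkeeping in the first step, specifically verifying that the merging of zero-blocks built into $\Otilda$ is exactly compensated by the restriction to $Y$, since this is the one place where the difference between $\cO ( \tau )$ and $\Otilda ( \tau )$ must be tracked carefully.
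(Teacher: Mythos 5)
Your proof is correct, but it is built differently from the paper's, even though the two share the same crux. The paper never introduces the induced permutation $\tau \induc Y$: it fixes the same $\tau$ with $\Otilda ( \tau ) = \pi$, assumes $\theta \not\in \ncb (p)$, extracts from this failure four points $a,c \in A$, $b,d \in A'$ lying in two \emph{distinct} blocks of $\theta$ with $\alpha \induc \{ a,b,c,d \} = (a,b,c,d)$, and lifts them directly to the relations $\gamma \induc \{ a,b,c,d \} = (a,b,c,d)$ and $\tau \induc \{ a,b,c,d \} = (a,c)(b,d)$, i.e.\ to an (AC-1) pattern for $\tau$ --- which is exactly the contradiction you produce in your (DC) step, so the mathematical heart of the two arguments coincides. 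What you add around this common core is a stronger intermediate statement: that $\tau \induc Y$ lies in $\sncb (p) = \snc (Y, \alpha ) \cap B_Y$ (membership in $B_Y$ via the symmetry of first-return maps, compatibility with $\alpha$, absence of (DC)), together with the identification $\Psi_1 ( \pi ) = \cO ( \tau \induc Y )$, after which $\theta \in \ncb (p)$ follows from the disc-case bijection (\ref{eqn:1.08}). This buys a reusable permutation-level fact --- the restriction of an annular non-crossing permutation of type B to either circle is a disc non-crossing permutation of type B whose orbit partition is $\Psi_1 ( \pi )$, respectively $\Psi_2 ( \pi )$ --- at the cost of the bookkeeping you flag yourself: the zero-block accounting via Lemma \ref{lemma4.8} (which, as stated, does supply exactly what you need, since either $\cO ( \tau ) = \Otilda ( \tau )$ or the two differ only by splitting the unique inversion-invariant $\gamma$-connected block along $Y$ and $Z$), plus the compatibility check, none of which the paper needs because it stays entirely at the level of partitions. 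Both arguments are complete; the paper's is shorter, yours proves slightly more.
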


\begin{proof} We denote by $\tau$ the unique permutation in
$\sncb (p,q)$ which has $\Otilda ( \tau ) = \pi$.

Assume for contradiction that $\theta \not\in \ncb (p)$. Then there
exist two distinct blocks $A$ and $A'$ of $\theta$ and elements $a,c
\in A$, $b,d \in A'$ such that $\alpha \induc \{ a,b,c,d \} =
(a,b,c,d)$, where
\[
\alpha := \gamma \induc Y = (1, \ldots , p, -1, \ldots , -p) \in \cS
(Y).
\]
The blocks $A$ and $A'$ can be written as $M \cap Y$ and
respectively $M' \cap Y$, where $M$ and $M'$ are two distinct blocks
of $\pi$. By using the fact that $\pi = \Otilda ( \tau )$, it is
easily seen that $\tau \induc \{ a,b,c,d \} = (a,c)(b,d)$. On the
other hand it is clear that
\[
\gamma \induc \{ a,b,c,d \} = \alpha \induc \{ a,b,c,d \} =
(a,b,c,d),
\]
and it follows that $\tau$ satisfies the crossing pattern (AC-1) --
contradiction.

The verification that $\omega \in \ncb (q)$ is made on the same
lines as shown for $\theta$ in the preceding paragraph.
\end{proof}

\begin{cor}   \label{corollary5.8}
Let $\pi , \rho$ be in $\ncb (p,q)$, and let us denote $\pi \wedge
\rho =: \nu$. If $\nu$ is $\gamma$-disconnected, then $\nu \in \ncb
(p,q)$.
\end{cor}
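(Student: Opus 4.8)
The plan is to reduce the $\gamma$-disconnected case to the disc picture via the maps $\Psi_1, \Psi_2$ from Notation \ref{definition5.6} and then to reassemble $\nu$ using the map $\Phi$ of Definition \ref{definition5.1}. First I would dispose of the possibility that $\nu$ has an inversion-invariant block: in that situation Corollary \ref{corollary5.5} already gives $\nu \in \ncb (p,q)$, so from here on I may assume that $\nu$ has no inversion-invariant blocks.

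The core observation is that $\Psi_1$ and $\Psi_2$ commute with the intersection meet. Indeed, the blocks of $\Psi_1 ( \pi \wedge \rho )$ are the non-empty sets $(A \cap B) \cap Y$, with $A$ a block of $\pi$ and $B$ a block of $\rho$, and these are exactly the non-empty sets $(A \cap Y) \cap (B \cap Y)$, i.e. the blocks of $\Psi_1 ( \pi ) \wedge \Psi_1 ( \rho )$; the same holds for $\Psi_2$. Hence, writing $\theta := \Psi_1 ( \nu )$ and $\omega := \Psi_2 ( \nu )$, I get $\theta = \Psi_1 ( \pi ) \wedge \Psi_1 ( \rho )$ and $\omega = \Psi_2 ( \pi ) \wedge \Psi_2 ( \rho )$. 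By Lemma \ref{lemma5.7} the four partitions satisfy $\Psi_1 ( \pi ), \Psi_1 ( \rho ) \in \ncb (p)$ and $\Psi_2 ( \pi ), \Psi_2 ( \rho ) \in \ncb (q)$; invoking the fact from the disc case that $\ncb (p)$ and $\ncb (q)$ are each closed under intersection meet (the intersection meet of two symmetric circular-non-crossing partitions is again symmetric and circular-non-crossing, and has at most one zero-block), I conclude that $\theta \in \ncb (p)$ and $\omega \in \ncb (q)$. Proposition \ref{proposition5.3} then guarantees that $\Phi ( \theta , \omega ) \in \ncb (p,q)$.

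It remains to identify $\nu$ with $\Phi ( \theta , \omega )$. Because $\nu$ is $\gamma$-disconnected, every block $M$ of $\nu$ satisfies $M \subseteq Y$ or $M \subseteq Z$, so the blocks of $\theta = \Psi_1 ( \nu )$ are precisely the blocks of $\nu$ lying in $Y$, and, after the relabeling in Notation \ref{definition5.6}, the blocks of $\omega = \Psi_2 ( \nu )$ correspond to the blocks of $\nu$ lying in $Z$. Since we are in the case where $\nu$ has no inversion-invariant block, neither $\theta$ nor $\omega$ has one; thus in the construction of $\Phi ( \theta , \omega )$ steps (i) and (ii) of Definition \ref{definition5.1} reproduce all the blocks of $\nu$, their union $U$ is all of $X$, and step (iii) contributes nothing. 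Therefore $\Phi ( \theta , \omega ) = \nu$, which yields $\nu \in \ncb (p,q)$.

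I expect the main obstacle to be bookkeeping rather than anything conceptual: the definition of $\Phi$ forces all inversion-invariant blocks to be merged into a single, $\gamma$-connected block, so one must be careful that the reconstruction $\Phi ( \theta , \omega ) = \nu$ can be asserted only after the inversion-invariant case has been removed (via Corollary \ref{corollary5.5}). The other point needing genuine attention is the closure of $\ncb (p)$ and $\ncb (q)$ under intersection meet, which is where the disc-case structure is actually used; by contrast, the commutation of $\Psi_1, \Psi_2$ with $\wedge$ is a routine set-theoretic check.
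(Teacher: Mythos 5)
Your proposal is correct and follows essentially the same route as the paper's proof: reduce to the case of no inversion-invariant blocks via Corollary \ref{corollary5.5}, use the commutation $\Psi_i(\pi\wedge\rho)=\Psi_i(\pi)\wedge\Psi_i(\rho)$ together with Lemma \ref{lemma5.7} and the closure of the disc-case lattices $\ncb(p)$, $\ncb(q)$ under intersection meet, and then reconstruct $\nu$ as $\Phi(\Psi_1(\nu),\Psi_2(\nu))$ so that Proposition \ref{proposition5.3} applies. The only cosmetic difference is that you verify $\Phi(\theta,\omega)=\nu$ directly from steps (i)--(iii) of Definition \ref{definition5.1}, whereas the paper cites the reformulation in Remark \ref{remark5.2}.1; the content is identical.
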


\begin{proof} We will assume that $\nu$ has no inversion-invariant
blocks (if it has such blocks, then we just invoke Corollary
\ref{corollary5.5}).

Consider the partitions $\Psi_1 ( \nu )$ and $\Psi_2 ( \nu )$; we
claim that $\Psi_1 ( \nu ) \in \ncb (p)$ and $\Psi_2 ( \nu ) \in
\ncb (q)$. Indeed, directly from how the maps $\Psi_1 ( \cdot )$ and
$\Psi_2 ( \cdot )$ are defined (see Notation \ref{definition5.6}) it
is immediately checked that
\[
\Psi_1 ( \nu ) = \Psi_1 ( \pi \wedge \rho ) = \Psi_1 ( \pi ) \wedge
\Psi_1 ( \rho ), \mbox{ and } \Psi_2 ( \nu ) = \Psi_2 ( \pi \wedge
\rho ) = \Psi_2 ( \pi ) \wedge \Psi_2 ( \rho ).
\]
But $\Psi_1 ( \pi ), \Psi_1 ( \rho ) \in \ncb (p)$ (by Lemma
\ref{lemma5.7}), and $\ncb (p)$ is closed under intersection meets,
hence $\Psi_1 ( \nu ) \in \ncb (p)$. A similar argument shows that
$\Psi_2 ( \nu ) \in \ncb (q)$.

Now let us look at the partition $\Phi ( \, \Psi_1 ( \nu ), \Psi_2 (
\nu ) \, )$. Note that $\Psi_1 ( \nu )$ and $\Psi_2 ( \nu )$ have no
inversion-invariant blocks (due to the hypothesis that $\nu$ has no
such blocks). The description of $\Phi ( \, \Psi_1 ( \nu ), \Psi_2 (
\nu ) \, )$ given in Remark \ref{remark5.2}.1 thus says that $\Phi (
\, \Psi_1 ( \nu ), \Psi_2 ( \nu ) \, )$ is simply obtained by
identifying the blocks of $\Psi_1 ( \nu )$ and of $\Psi_2 ( \nu )$
to subsets of $X$, in the natural way. But then it becomes clear
that $\Phi ( \, \Psi_1 ( \nu ), \Psi_2 ( \nu ) \, )$ is $\nu$
itself, and Proposition \ref{proposition5.3}.1 implies that $\nu \in
\ncb (p,q)$, as required.
\end{proof}

\subsection{The case when \boldmath{$\pi \wedge \rho$} is
\boldmath{$\gamma$}-connected}

\begin{lemma}  \label{lemma5.9}
Consider the collection of sets $\ob (p,q)$ introduced in subsection
4.1. Let $B$ be a set in $\ob (p,q)$ such that $B \cap (-B) =
\emptyset$, and let $A$ be a non-empty subset of $B$. Then $A \in
\ob (p,q)$.
\end{lemma}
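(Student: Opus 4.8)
The plan is to realize $A$ as an orbit of an explicitly constructed permutation $\sigma \in \sncb(p,q)$, obtained by ``shrinking'' the orbits $B$ and $-B$ of a witnessing permutation down to $A$ and $-A$. Since $B \in \ob(p,q)$, I first fix some $\tau \in \sncb(p,q)$ having $B$ as an orbit; then $-B$ is also an orbit of $\tau$, and $B \cap (-B) = \emptyset$ forces $-B \neq B$, $A \cap (-A) = \emptyset$, and $-A \subseteq -B$. I then define $\sigma \in \cS(X)$ by declaring $\sigma \induc (A \cup (-A)) = \tau \induc (A \cup (-A))$ and letting $\sigma$ fix every point of $X \setminus (A \cup (-A))$. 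Because $A \subseteq B$ and $-A \subseteq -B$ lie in distinct orbits of $\tau$, the permutation $\tau \induc (A \cup (-A))$ has exactly the two orbits $A$ and $-A$; hence $\sigma$ has $A$ as an orbit, and the symmetry $\tau(-i) = -\tau(i)$ together with the invariance of $A \cup (-A)$ under negation shows $\sigma \in B_n$. It then remains only to prove $\sigma \in \snc(X,\gamma)$, for then $\sigma \in \snc(X,\gamma) \cap B_n = \sncb(p,q)$ exhibits $A$ as a member of $\ob(p,q)$.

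The technical heart is a transfer principle: for every subset $S \subseteq A \cup (-A)$ one has $\sigma \induc S = \tau \induc S$. I would prove this by following, for $s \in S$, the two trajectories $\sigma(s), \sigma^2(s), \dots$ and $\tau(s), \tau^2(s), \dots$; both stay inside whichever of $A$, $-A$ contains $s$ (for $\sigma$ because $A$ and $-A$ are its orbits, for $\tau$ because they sit inside the orbits $B$, $-B$), and on that piece $\sigma$ and $\tau$ agree. The only point to check is that the first return of the $\tau$-trajectory to $S$ cannot land in the ``other half''; this is where $B \cap (-B) = \emptyset$ enters, via the equalities $S \cap B = S \cap A$ and $S \cap (-B) = S \cap (-A)$.

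With the transfer principle in hand I would verify the two defining conditions of $\snc(X,\gamma)$ from Proposition \ref{prop:2.9}. For compatibility (Definition \ref{def:2.8}.1) the singleton orbits are trivial, and for the orbit $A$ condition (i) follows by restricting the equality $\tau \induc (B \cap Y) = \gamma \induc (B \cap Y)$ (valid since $\tau$ is compatible with $\gamma$) to $A \cap Y \subseteq B \cap Y$, and similarly on $Z$; the orbit $-A$ is handled by symmetry. Condition (ii) for $A$ I would deduce from condition (ii) for $B$ by observing that the latter forces $B \cap Y$ and $B \cap Z$ to each occupy a single contiguous arc of the cycle on $B$ (a connected cycle meeting both $Y$ and $Z$ has equally many $Y\!\to\!Z$ and $Z\!\to\!Y$ transitions, each capped at one), so that their subsets $A \cap Y$ and $A \cap Z$ remain contiguous in the induced cycle on $A$ and produce at most one transition each way.

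Finally, to rule out the crossing patterns (AC-1), (AC-2), (AC-3) I would note that in each pattern every point moved by $\sigma$ lies in a non-singleton orbit, hence in $A \cup (-A)$; thus the four, five, or six witnessing points all lie in $A \cup (-A)$, and the transfer principle gives $\sigma \induc \{\dots\} = \tau \induc \{\dots\}$ on exactly those points. Since the remaining requirements of each pattern concern only $\gamma$ and the AC-test permutations $\lambda_{y,z}$ (which are unchanged), the same points would witness the same crossing pattern for $\tau$, contradicting $\tau \in \sncb(p,q)$. I expect the main obstacle to be the transfer principle and, just behind it, the contiguous-arc analysis underlying condition (ii); once these are secured, both halves of the $\snc(X,\gamma)$ characterization drop out by restriction from $\tau$ to $\sigma$.
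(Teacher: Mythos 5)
Your proof is correct and follows essentially the same route as the paper: both arguments build a permutation $\sigma$ whose nontrivial orbits are $A$ and $-A$ (carrying the $\tau$-induced cycles) with the discarded points of $B$ and $-B$ turned into fixed points (the paper's $\sigma$ keeps $\tau$'s action on the orbits outside $B \cup (-B)$ instead of fixing them, an inessential variation), and both then verify $\sigma \in \sncb (p,q)$ via Proposition \ref{prop:2.9}, using exactly your key observation that fixed points cannot be involved in any of the patterns (AC-1), (AC-2), (AC-3). The paper dismisses the compatibility check and the pattern-transfer step as immediate, whereas your transfer principle and contiguous-arc analysis are correct fillings-in of those same details.
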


\begin{proof} By the definition of $\ob (p,q)$, we can find a
permutation $\tau \in \sncb (p,q)$ such that $B$ is an orbit of
$\tau$. Then $-B$ is an orbit of $\tau$ as well. Let $\sigma$ be the
permutation of $X$ defined as follows:

\noindent (i) The sets $A$ and $-A$ are orbits of $\sigma$, and we
have $\sigma \induc \pm A = \tau \induc \pm A$.

\noindent (ii) Every element of $B \setminus A$ and every element of
$(-B) \setminus (-A)$ is a fixed point for $\sigma$.

\noindent (iii) On the set $X \setminus (B \cup (-B))$ (which is a
union of orbits of $\tau$) the permutation $\sigma$ acts exactly as
$\tau$ does.

We claim that $\sigma \in \sncb (p,q)$. Indeed, on the one hand it
is clear that $\sigma \in B_n$. On the other hand, the fact that
$\sigma \in \snc (X, \gamma )$ is easily verified by using the
description of $\snc (X, \gamma )$ in terms of annular crossing
patterns: the compatibility of $\sigma$ with $\gamma$ follows
immediately from the compatibility of $\tau$ with $\gamma$, and it
is also immediate that if $\sigma$ satisfies the crossing pattern
(AC-$i$) for some $1 \leq i \leq 3$ then $\tau$ would satisfy the
same crossing pattern, for the same set of points of $X$. (In the
verification of the latter fact one uses the obvious remark that
fixed points of permutations of $X$ can not be involved in any of
the crossing patterns (AC-1), (AC-2), (AC-3).)

So $\sigma \in \sncb (p,q)$ and $A$ is an orbit of $\sigma$, which
implies that $A \in \ob (p,q)$.
\end{proof}

\begin{prop}  \label{proposition5.10}
Let $\pi , \rho$ be in $\ncb (p,q)$, and let us denote $\pi \wedge
\rho =: \nu$. Suppose that $\nu$ is $\gamma$-connected, and has no
inversion-invariant block.

$1^o$ Every block $A$ of $\nu$ belongs to the collection of sets
$\ob (p,q)$, and we can thus talk about the canonical permutation
$\mu_A$ (introduced in Definition \ref{definition4.4}).

$2^o$ Let $\tau$ be the permutation of $X$ which is uniquely
determined by the requirements that $\Omega ( \tau ) = \nu$ and that
$\tau \induc A = \mu_A$, for every block $A$ of $\nu$. Then $\tau$
belongs to the group $B_n$, it is compatible with $\gamma$ (in the
sense of Definition \ref{def:2.8}.1), and does not display the
crossing patterns (AC-1) and (AC-2) (as described in Definition
\ref{def:2.8}.2).
\end{prop}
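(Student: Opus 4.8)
The plan is to prove the three claims of Proposition~\ref{proposition5.10} in the order $1^o$, then the $B_n$-membership part of $2^o$, then the absence of (AC-1) and (AC-2). For part $1^o$, let $A$ be a block of $\nu = \pi \wedge \rho$, written as $A = M \cap M'$ with $M$ a block of $\pi$ and $M'$ a block of $\rho$. Since $\nu$ has no inversion-invariant block, $A \neq -A$, hence $A \cap (-A) = \emptyset$; the same reasoning as in Corollary~\ref{corollary5.5} shows that $M$ is not inversion-invariant (otherwise $M \cap (-M) \supseteq$ a piece forcing an inversion-invariant block of $\nu$), so $M \cap (-M) = \emptyset$ and thus $M \in \ob(p,q)$ with $M \cap (-M) = \emptyset$. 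Now $A$ is a non-empty subset of $M$, and Lemma~\ref{lemma5.9} applies directly to give $A \in \ob(p,q)$. This lets us legitimately introduce the canonical permutation $\mu_A$ for every block $A$ of $\nu$.

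For the $B_n$-membership in part $2^o$, I would argue that the assignment $\tau \induc A = \mu_A$ on each block respects the inversion symmetry. The key observation is that the block-set of $\nu$ is itself inversion-symmetric: since $\pi$ and $\rho$ are both type-B partitions (every block has its negative as a block), the intersection $\nu = \pi \wedge \rho$ has the same property, so $A$ is a block of $\nu$ if and only if $-A$ is. One then checks that $\mu_{-A} = -\mu_A \cdot (-)$ in the appropriate sense, i.e.\ that the canonical permutation of $-A$ is the inversion-conjugate of $\mu_A$; this follows from the explicit formulas (\ref{eqn:4.5}) and (\ref{eqn:4.6}) in Remark~\ref{remark4.4}.1, together with the fact that $\gamma$ and the AC-test permutations $\lambda_{y,z}$ behave equivariantly under the inversion map $x \mapsto -x$. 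Since $\tau$ acts on each $A$ as $\mu_A$ and on $-A$ as $\mu_{-A}$, the relation $\tau(-i) = -\tau(i)$ holds, giving $\tau \in B_n$.

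For compatibility with $\gamma$, I would verify conditions (i) and (ii) of Definition~\ref{def:2.8}.1 block by block. On a $\gamma$-disconnected block $A$ (contained in $Y$ or in $Z$), $\mu_A = \gamma \induc A$ by (\ref{eqn:4.5}), so (i) is immediate and (ii) is vacuous. On a $\gamma$-connected block $A$, equation (\ref{eqn:4.7}) of Remark~\ref{remark4.4}.2 gives exactly condition (i), namely that $\mu_A$ agrees with $\gamma$ on $A \cap Y$ and on $A \cap Z$; condition (ii) follows because $\mu_A = \lambda_{-y,-z} \induc A$ is a single cycle sending the ``last'' element of $A \cap Y$ to the ``first'' of $A \cap Z$ and vice versa, so there is at most one $Y$-to-$Z$ jump and at most one $Z$-to-$Y$ jump within $A$.

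The main work, and the expected obstacle, is ruling out crossing patterns (AC-1) and (AC-2). The strategy is to derive a contradiction with the non-crossing property of $\pi$ and $\rho$ (equivalently, of their associated permutations in $\sncb(p,q)$). If $\tau$ displayed (AC-1), there would be four points $a,b,c,d$ with $\gamma \induc \{a,b,c,d\} = (a,b,c,d)$ and $\tau \induc \{a,b,c,d\} = (a,c)(b,d)$; the pairing $(a,c)$ and $(b,d)$ forces $a,c$ to lie in one block of $\nu$ and $b,d$ in another, so $a,c$ lie in a common block of both $\pi$ and $\rho$, and likewise $b,d$. One of $\pi$, $\rho$ must then itself exhibit this crossing configuration relative to $\gamma$, contradicting that $\pi, \rho \in \ncb(p,q)$ avoid (AC-1). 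The pattern (AC-2) is handled analogously but is more delicate, since it involves the AC-test permutation $\lambda_{y,z}$ and the five points $a,b,c,y,z$: here I would use that $\nu$ is $\gamma$-connected with no inversion-invariant block to locate the relevant $\gamma$-connected block of $\nu$, pull the three-point $\lambda_{y,z}$-cycle condition back to $\pi$ and $\rho$ via the canonical-permutation formula (\ref{eqn:4.6}), and conclude that one of $\pi, \rho$ violates (AC-2). The subtlety to watch for is that the $y,z$ witnessing the pattern in $\tau$ must be shown to lie in the same block-structure that $\pi$ or $\rho$ controls, which is where the $\gamma$-connectivity of $\nu$ and Lemma~\ref{lemma4.13} (applied to the relevant block as a union of orbits) do the essential bookkeeping. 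Pattern (AC-3) is deliberately \emph{not} claimed here, which is consistent with Remark~\ref{rem:1.10}: (AC-3) is precisely the obstruction that can survive, and its analysis is deferred.
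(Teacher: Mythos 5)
Your overall strategy is the same as the paper's (part $1^o$ via Lemma \ref{lemma5.9}, compatibility via the explicit formulas for the canonical permutations, and the crossing patterns transferred back to the permutations underlying $\pi$ and $\rho$), but two of your steps have genuine gaps. First, in part $1^o$, your claim that the block $M$ of $\pi$ containing $A$ is automatically non-inversion-invariant is false, and the appeal to ``the same reasoning as in Corollary \ref{corollary5.5}'' runs that corollary's implication backwards. Concretely, with $p=q=2$: let $\pi = \Otilda ( \phi )$ for $\phi = (1,-1)(3,-3) \in \sncb (2,2)$, so that $\pi$ has the inversion-invariant block $M = \{ 1,-1,3,-3 \}$, and let $\rho = \cO \bigl( (1,3)(-1,-3) \bigr)$. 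Then $\nu = \pi \wedge \rho = \rho$ is $\gamma$-connected with no inversion-invariant block, yet its block $A = \{ 1,3 \}$ lies inside the inversion-invariant block $M$ of $\pi$. What is true, and what the paper uses, is that the block of $\pi$ and the block of $\rho$ cutting out $A$ cannot \emph{both} be inversion-invariant (that would force $-A = A$); so Lemma \ref{lemma5.9} must be applied to whichever of the two is not inversion-invariant --- possibly the block of $\rho$ rather than that of $\pi$. You also skip the verification that such a block belongs to $\ob (p,q)$: one must argue that a non-inversion-invariant block of $\Otilda ( \phi )$ is an honest orbit of $\phi$ (it cannot be the joined zero-block), before Lemma \ref{lemma5.9} can be invoked.

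Second, the (AC-1)/(AC-2) part --- which is where the substance of the proposition lies --- is left as a sketch that omits the two steps making the transfer work. (a) To pass from ``$\{ a,c \}$ and $\{ b,d \}$ (or $\{ a,b,c \}$ and $\{ y,z \}$) lie in distinct blocks $B \neq B'$ of $\pi$'' to ``the permutation $\phi$ with $\Otilda ( \phi ) = \pi$ displays the pattern'', you need $B$ and $B'$ to be distinct \emph{orbits} of $\phi$; this can fail precisely when one of them is the joined zero-block of $\Otilda ( \phi )$, and excluding that possibility is the main work in the paper's proof: for (AC-2) it shows both blocks are $\gamma$-connected, hence $\phi$ is $\gamma$-connected, hence by Proposition \ref{prop:3.4} it has no inversion-invariant orbits and $\cO ( \phi ) = \pi$. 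Your phrase ``one of $\pi , \rho$ must then itself exhibit this crossing configuration'' presupposes exactly this blocks-versus-orbits identification. (b) For (AC-2) one must first prove that $\{ a,b,c \}$ is a $\gamma$-connected set: otherwise Equation (\ref{eqn:4.7}) gives $\tau \induc \{ a,b,c \} = \gamma \induc \{ a,b,c \} = \lambda_{y,z} \induc \{ a,b,c \}$, contradicting the assumed pattern outright with no reference to $\pi$ or $\rho$; this dichotomy is absent from your sketch. Also, the bookkeeping lemma needed to compute $\phi \induc \{ a,b,c \}$ is Lemma \ref{lemma4.12} ($\mu_B \induc A = \mu_A$ for $A \subseteq B$ in $\ob (p,q)$), not Lemma \ref{lemma4.13}, which requires its set to be a union of orbits of the ambient permutation and does not apply here. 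Your arguments for $\tau \in B_n$ and for compatibility with $\gamma$ are correct, and indeed more detailed than the paper's.
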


\begin{proof} $1^o$ Let $A$ be a block of $\nu$, and let us
write $A = B \cap C$ where $B$ is a block of $\pi$ and $C$ is a
block of $\rho$. It cannot happen that $B$ and $C$ are both
inversion-invariant (if $B=-B$ and $C=-C$ then it would follow that
$A = -A$, in contradiction to the hypotheses given on $\nu$). Assume
for instance that $B$ is not inversion-invariant.

Observe that $B \in \ob (p,q)$. Indeed, $B$ is a block of $\pi$, and
$\pi$ is of the form $\Otilda ( \phi )$ for some $\phi \in \sncb
(p,q)$. From the definition of $\Otilda ( \phi )$ it follows that
either $B$ is an orbit of $\phi$ or a union of orbits of $\phi$; but
the latter possibility is ruled out by the fact that $B \cap (-B) =
\emptyset$ (where we take into account how $\Otilda ( \phi )$ was
defined, in Notation \ref{def:1.2}). Hence $B$ is an orbit of
$\phi$, and hence $B \in \ob (p,q)$.

But then Lemma \ref{lemma5.9} applies to $B$ and $A$, and gives us
that $A \in \ob (p,q)$ as well.

$2^o$ The fact that $\tau \in B_n$ is immediate. It is also
immediate that $\tau$ satisfies the conditions of compatibility with
$\gamma$. Indeed, these conditions are actually defined for the
individual cycles of $\tau$; so they have to be fulfilled since (by
the definition of the canonical permutations $\mu_A$) every cycle of
$\tau$ is stolen from some permutation in $\sncb (p,q)$.

The proof that $\tau$ cannot satisfy (AC-1) and (AC-2) relies
essentially on the fact that the definition for each of these
crossing patterns involves elements from {\em only two} orbits of
$\tau$. We will show the proof for (AC-2), and leave the analogous
argument for (AC-1) as an exercise to the reader.

So let us assume for contradiction that $\tau$ displays the crossing
pattern (AC-2), hence that there exist five distinct points
$a,b,c,y,z \in X$, with $y \in Y$ and $z \in Z$, such that
\begin{equation}  \label{eqn:5.10}
\lambda_{y,z} \induc \{ a,b,c \} = (a,b,c) \ \mbox{ and } \ \tau
\induc \{ a,b,c,y,z \} = (a,c,b)(y,z).
\end{equation}

We claim that $\{ a,b,c \}$ must be a $\gamma$-connected subset of
$X$. Indeed, let $A$ be the orbit of $\tau$ which contains $\{ a,b,c
\}$. If it happened that $\{ a,b,c \} \subseteq Y$ or $\{ a,b,c \}
\subseteq Z$  then we would deduce that
\begin{align*}
\tau \induc \{ a,b,c \}
&= \mu_A \induc \{ a,b,c \} & \mbox{ (by definition of $\tau$)} \\
&= \gamma \induc \{ a,b,c \} & \mbox{ (by Eqn.(\ref{eqn:4.7})
                                 in Remark \ref{remark4.4}.2)}  \\
&= \lambda_{y,z} \induc \{ a,b,c \} & \mbox{ (directly from
                       the definition of $\lambda_{y,z}$),}
\end{align*}
in contradiction to what was assumed in (\ref{eqn:5.10}).

Now let $A$ be as above and let $A'$ denote the orbit of $\tau$
which contains $\{ y,z \}$. Then $A,A'$ are blocks of $\nu$, so we
can write $A= B \cap C$ and $A' = B' \cap C'$ where $B,B'$ are
blocks of $\pi$ and $C,C'$ are blocks of $\rho$. We have that either
$B \neq B'$ or $C \neq C'$ (in the opposite case it would follow
that $A=A'$, in contradiction to how $\tau$ acts on $\{ a,b,c,y,z
\}$). By swapping the roles of $\pi$ and $\rho$ if necessary, we
will assume that $B \neq B'$. Note that each of the two blocks $B$
and $B'$ of $\pi$ is $\gamma$-connected (since $B \supseteq \{ a,b,c
\}$ and $B' \supseteq \{ y,z \}$).

Let $\phi$ be the unique permutation in $\sncb (p,q)$ with the
property that $\Otilda ( \phi ) = \pi$. Observe that $\phi$ is
$\gamma$-connected; indeed, if $\phi$ was to be
$\gamma$-disconnected then (as seen directly from the definition of
$\Otilda$) the partition $\Otilda ( \phi )$ would have at most one
$\gamma$-connected block, while we know that $\pi$ has at least two
such blocks, namely $B$ and $B'$. From the fact that $\phi$ is
$\gamma$-connected we further infer that $\phi$ has no
inversion-invariant orbits (Proposition \ref{prop:3.4}). This
implies that $\cO ( \phi ) = \Otilda ( \phi ) = \pi$, and we can
therefore be certain that $B$ and $B'$ are orbits of $\phi$.

We next prove that $\phi \induc \{ a,b,c \} = (a,c,b)$. To this end
we consider the canonical permutation $\mu_B$ associated to the set
$B \in \ob (p,q)$ (see Definition \ref{definition4.4}) and we write:
\begin{align*}
\phi \induc \{ a,b,c \}
&= \mu_B \induc \{ a,b,c \} & \mbox{ (by definition of $\mu_B$)} \\
&= ( \mu_B \induc A) \induc \{ a,b,c \} & \mbox{ (because
             $B \supseteq A \supseteq \{ a,b,c \}$) }            \\
&= \mu_A \induc \{ a,b,c \} & \mbox{ (by Lemma \ref{lemma4.12})} \\
&= \tau  \induc \{ a,b,c \} & \mbox{ (by definition of $\tau$)}  \\
&= (a,c,b) & \mbox{ (by (\ref{eqn:5.10})).}
\end{align*}

We have thus found that $\phi$ has two distinct orbits $B$ and $B'$
such that $B \supseteq \{ a,b,c \}$, $B' \supseteq \{ y,z \}$, and
such that $\phi \induc \{ a,b,c \} = (a,c,b)$. It is then clear that
$\phi \induc \{ a,b,c,y,z \} = (a,c,b)(y,z)$; in conjunction with
the equality $\lambda_{y,z} \induc \{ a,b,c \} = (a,b,c)$ from
(\ref{eqn:5.10}) this shows that $\phi$ satisfies the crossing
pattern (AC-2) -- contradiction.
\end{proof}

\begin{rem}  \label{remark5.11}
At this moment we narrowed down quite a bit the possibilities for
how it can happen that $\pi , \rho \in \ncb (p,q)$, but $\nu := \pi
\wedge \rho \not\in \ncb (p,q)$: we must have that $\nu$ is
$\gamma$-connected and without inversion-invariant blocks (because
of Corollaries \ref{corollary5.5} and \ref{corollary5.8}), and the
permutation $\tau$ constructed in Proposition \ref{proposition5.10}
must display the crossing pattern (AC-3).

It is somewhat disappointing to see that if $p,q \geq 2$, this one
possibility that was left (with $\tau$ displaying the crossing
pattern (AC-3)) {\em can in fact occur}. This is immediately seen by
looking at the example where $\pi = \cO ( \sigma )$ and $\rho = \cO
( \tau )$ for
\begin{equation}  \label{eqn:5.2}
\left\{  \begin{array}{ccl}
\sigma & = &  (1,2,p+1,p+2)(-1,-2,-(p+1),-(p+2) ), \\
\tau   & = &  (1,-(p+2), p+1,-2)(-1, p+2 ,-(p+1), 2 ).
\end{array}  \right.
\end{equation}
In fact, if $p,q \geq 2$ then one can argue directly that $\ncb
(p,q)$ is not a lattice, in the following way: let $\sigma , \tau$
be as in (\ref{eqn:5.2}), and consider on the other hand the
permutations
\begin{equation}  \label{eqn:5.3}
\sigma_o = (1,p+1)(-1,-(p+1)), \ \ \tau_o   = (2,p+2)(-2,-(p+2))
\in B_n.
\end{equation}
We denote $\Omega ( \sigma ) = \pi$, $\Omega ( \tau ) = \rho$,
$\Omega ( \sigma_o ) = \pi_o$, $\Omega ( \tau_o ) = \rho_o$. It is
straightforward to check that $\pi , \rho , \pi_o, \rho_o$ all
belong to $\ncb (p,q)$, satisfy the inequalities $\pi_o \leq \pi$,
$\pi_o \leq \rho$, $\rho_o \leq \pi$, $\rho_o \leq \rho$, and yet
there is no partition $\nu \in \ncb (p,q)$ such that $\pi_o , \rho_o
\leq \nu \leq \pi , \rho$.

Figure 4 shows how the partitions and permutations of this example
look in the particular case when $p=q=2$. (The double-bracket
notation ``$((1,2,3,4))$'' is a short-hand for
``$(1,2,3,4)(-1,-2,-3,-4)$'', and the same convention is also used
for the other three permutations represented in this figure.)

$\ $

\begin{center}
\scalebox{0.6}{\includegraphics{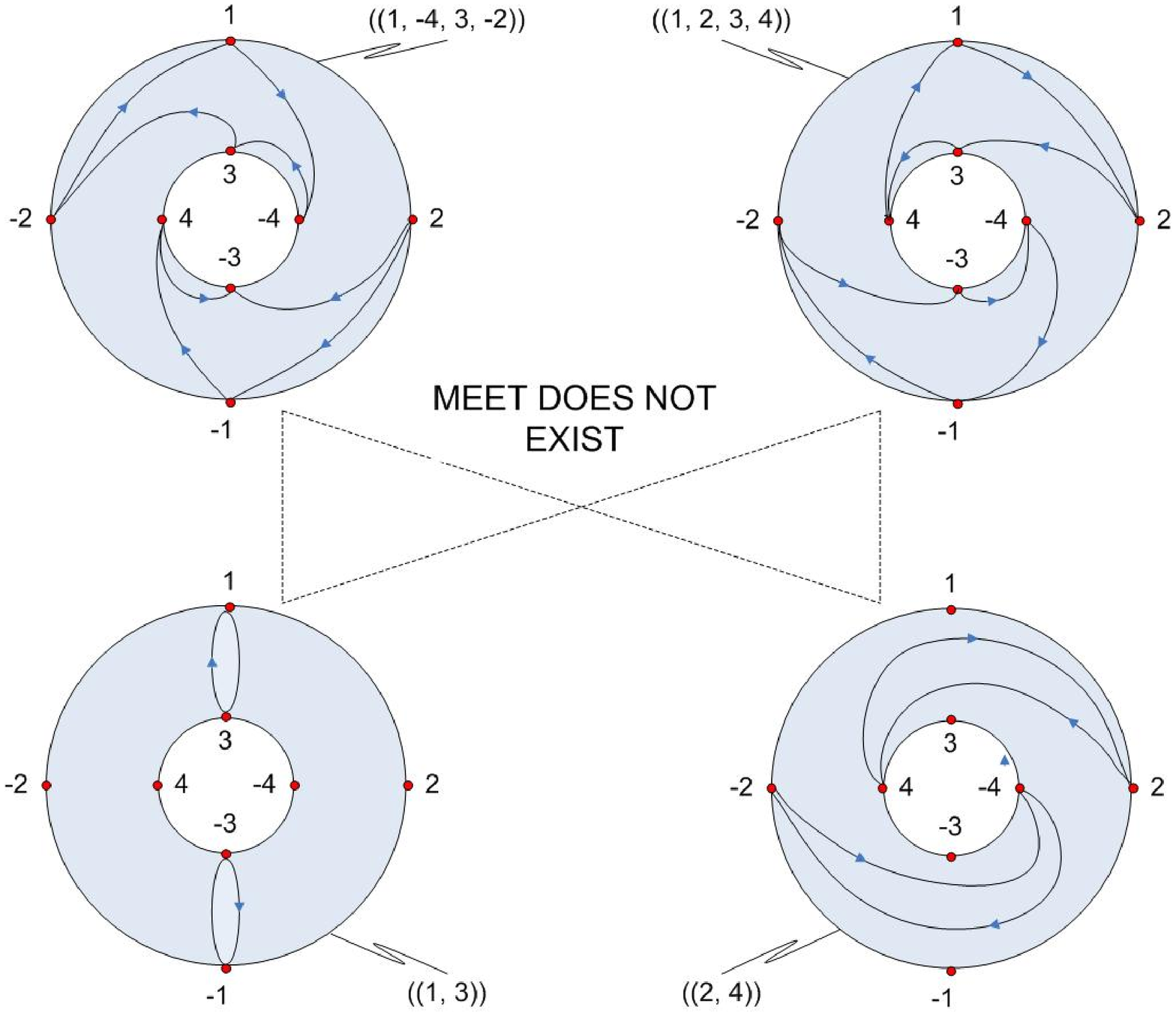}}

\vspace{6pt}

{\bf Figure 4.} Illustration for why $\ncb (2,2)$ is not a lattice.
\end{center}

$\ $

On the other hand, note that the above example takes advantage of
the existence of at least 4 points on each of the two circles of the
annulus. This detail really turns out to be essential -- in the next
section we will see that it is possible to ``finish the argument''
for the fact that $\pi \wedge \rho \in \ncb (p,q)$, if we place
ourselves in the particular situation when $p=n-1$ and $q=1$.
\end{rem}

$\ $

\begin{center}
{\bf\large 6. \boldmath{$\ncb (n-1,1)$} is a lattice}
\end{center}
\setcounter{section}{6} \setcounter{equation}{0} \setcounter{thm}{0}
\setcounter{subsection}{0}

\noindent This section is a continuation of Section 5, and inherits
all the notations used there ($X,Y,Z$, $\gamma , \ldots \ )$, with
the specification that the positive integers $p,q$ are now set to be
\begin{equation}     \label{eqn:6.1}
p= n-1, \ q=1, \ \ \mbox{ for some $n \geq 2$.}
\end{equation}
So the set $X$ continues to be $\{ 1,2, \ldots , n \} \cup \{ -1,-2,
\ldots , -n \}$, but $Y$ and $Z$ have now become
\[
Y = \{ 1,2, \ldots , n-1 \} \cup \{ -1,-2, \ldots , -(n-1) \} , \ \
Z = \{ n, -n \} ,
\]
$\gamma$ is the permutation
\[
\gamma = \Bigl( 1, \ldots , n-1, -1, \ldots , -(n-1) \Bigr) (n, -n)
\in B_n,
\]
and so on. Our goal for the section is to present the proof of
Theorem \ref{theorem3}, which states that $\ncb (n-1, 1)$ is a
lattice.

$\ $

\begin{rem}  \label{remark6.1}
It is easily seen that in order to prove Theorem \ref{theorem3}, all
we need to do is prove that $\ncb (n-1,1)$ is closed under the
operation ``$\wedge$'' of intersection meet which was reviewed at
the beginning of Section 5. Indeed, once this is established, it
becomes clear that every $\pi, \rho \in \ncb (n-1,1)$ have a
greatest common lower bound in $\ncb (n-1,1)$, which is precisely
their intersection meet; hence ``$\wedge$'' really gives a meet
operation on $\ncb (n-1,1)$. On the other hand it is obvious that
$\ncb (n-1,1)$ has a largest element, the partition of $X$ into only
one block; and it is easily checked that a finite poset with a meet
operation and which has a largest element has to be a lattice -- see
e.g. Proposition 3.3.1 in the monograph \cite{Sta96}.
\end{rem}

\begin{rem}    \label{remark6.2}
Let $\pi, \rho$ be two partitions in $\ncb (n-1,1)$, and consider
their intersection meet $\nu := \pi \wedge \rho$. Let us suppose
that $\nu$ is $\gamma$-connected and has no inversion-invariant
blocks, and let $\tau$ be the permutation of $X$ defined as in
Proposition \ref{proposition5.10} above: the orbit partition of
$\tau$ is equal to $\nu$, and for every block $A$ of $\nu$ we have
that $\tau \induc A = \mu_A$ (the canonical permutation of $A$
introduced in Definition \ref{definition4.4}). We will spend most
part of the present section by examining whether $\tau$ can display
the crossing pattern (AC-3), in order to eventually conclude that
this cannot happen.

So let us assume that $\tau$ does satisfy (AC-3), i.e. that there
exist six distinct elements $a,b,c,d,y,z \in X$ such that $y \in Y$,
$z \in Z$, and where we have
\begin{equation}  \label{eqn:6.2}
\lambda_{y,z} \induc \{ a,b,c,d \} = (a,b,c,d), \ \ \tau \induc \{
a,b,c,d,y,z \} = (a,c)(b,d)(y,z).
\end{equation}
In the current remark we make some observations about what this
entails, and we set some notations.

The main observation we want to record here is that {\em exactly one
of the sets $\{ a,c \}$ and $\{ b,d \}$ is $\gamma$-connected.}
Indeed, it is clear that $\{ a,c \}$ and $\{ b,d \}$ can't both be
$\gamma$-connected, as this would imply that among $a,b,c,d,y,z$
there are three distinct elements of $Z$ (namely $z$, one element
from $\{ a,c \} \cap Z$ and one from $\{ b,d \} \cap Z$); this is
not possible, since $Z= \{ n, -n \}$ only has two elements.

Suppose on the other hand that neither of $\{ a,c \}$ and $\{ b,d
\}$ are $\gamma$-connected, i.e. that each of them is either
contained in $Y$ or contained in $Z$. Note it is not possible to
have $\{ a,b,c,d \} \subseteq Y$ or $\{ a,b,c,d \} \subseteq Z$.
Indeed, if we had for instance that $\{ a,b,c,d \} \subseteq Y$ then
it would follow that
\begin{align*}
\lambda_{y,z} \induc \{ a,b,c,d \} & = \Bigl( \lambda_{y,z} \induc (
Y \setminus \{ y \} ) \Bigr)
                                          \induc \{ a,b,c,d \}  \\
& = \gamma \induc \{ a,b,c,d \} .
\end{align*}
This would lead to
\[
\gamma \induc \{ a,b,c,d \} = (a,b,c,d), \ \ \tau \induc \{ a,b,c,d
\} = (a,c)(b,d),
\]
and would imply that $\tau$ satisfies the crossing pattern (AC-1),
in contradiction to Proposition \ref{proposition5.10}. So if we
assume that $\{ a,c \}$ and $\{ b,d \}$ are both
$\gamma$-disconnected then it must follow that $\{ a,c \} \subseteq
Y$ and $\{ b,d \} \subseteq Z$ or vice-versa ($\{ a,c \} \subseteq
Z$ and $\{ b,d \} \subseteq Y$). But this situation can't occur
either, because, as explained in Remark \ref{remark4.4}.3, it is not
compatible with the assumption that $\lambda_{y,z} \induc  \{
a,b,c,d \} = (a,b,c,d)$.

Hence we know that exactly one of $\{ a,c \}$ and $\{ b,d \}$ is
$\gamma$-connected. By doing a circular permutation of $a,b,c,d$
(which does not affect the two equalities from (\ref{eqn:6.2})) we
may assume that $\{ a,c \}$ is $\gamma$-connected, and moreover,
that $a \in Z$ and $c \in Y$.

Now, $a$ and $z$ are distinct elements of $Z$; since $|Z|=2$, we
deduce that
\begin{equation}  \label{eqn:6.3}
a=-z, \ \ Z = \{ a,z \},
\end{equation}
and the remaining four elements $b,c,d,y$ that play a role in
(\ref{eqn:6.2}) all belong to $Y$. It is useful to also record here
that the cyclic order of $b,c,d,y$ on $Y$ is given by the formula
\begin{equation}  \label{eqn:6.35}
\gamma \induc \{ b,c,d,y \} = (b,c,d,y);
\end{equation}
this follows immediately by using the assumption (\ref{eqn:6.2})
that $\lambda_{y,z} \induc \{ a,b,c,d \} = (a,b,c,d)$, and by
checking how the long cycle of $\lambda_{y,z}$ goes, when one starts
at the point $a \in Z$.

In what follows we will denote by $A,A'$ and $A''$ the three
distinct orbits of $\tau$ (equivalently, blocks of $\nu$) which
contain $\{ a,c \}$, $\{ b,d \}$ and $\{ y,z \}$, respectively.
Since $\nu = \pi \wedge \rho$, we can write
\begin{equation}  \label{eqn:6.4}
A = B \cap C, \ A' = B' \cap C', \ A'' = B'' \cap C'',
\end{equation}
where $B,B',B''$ are blocks of $\pi$ and $C,C',C''$ are blocks of
$\rho$. Note that we have the relations
\begin{equation}  \label{eqn:6.5}
B'' = -B, \ C'' = -C,
\end{equation}
which hold because $B'' \ni z = -a \in -B$ and $C'' \ni z = -a \in
-C$.
\end{rem}

\begin{lemma}   \label{lemma6.3}
Consider the setting of the Remark \ref{remark6.2}.

$1^o$ It is not possible that any two of the three blocks $B,B',B''$
of $\pi$ are distinct from each other. Similarly, it is not possible
that any two of the three blocks $C,C',C''$ of $\rho$ are distinct
from each other.

$2^o$ It is not possible that $B=B'=B''$, and similarly, it is not
possible that $C=C'=C''$.
\end{lemma}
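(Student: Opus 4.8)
The plan is to prove both parts by transferring the crossing pattern (AC-3) that $\tau$ is assumed to display (in the standing setting of Remark \ref{remark6.2}) onto the permutations of $\sncb (p,q)$ that produce $\pi$ and $\rho$. Write $\phi$ for the unique permutation in $\sncb (p,q)$ with $\Otilda ( \phi ) = \pi$, and $\psi$ for the one with $\Otilda ( \psi ) = \rho$. The whole configuration is symmetric under interchanging $\pi \leftrightarrow \rho$ (equivalently $B_\bullet \leftrightarrow C_\bullet$ and $\phi \leftrightarrow \psi$), so it suffices to establish the assertions about $B,B',B''$; the ones about $C,C',C''$ then follow by the identical argument applied to $\psi$.

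For $1^o$ I would argue by contradiction, assuming $B,B',B''$ pairwise distinct. First I would note that $\phi$ is $\gamma$-connected: the blocks $B \supseteq \{a,c\}$ and $B'' = -B \supseteq \{y,z\}$ are two distinct $\gamma$-connected blocks of $\pi = \Otilda ( \phi )$, whereas a $\gamma$-disconnected $\phi$ would force $\Otilda ( \phi )$ to have at most one $\gamma$-connected block (directly from the definition of $\Otilda$, exactly as in the proof of Proposition \ref{proposition5.10}). Hence $\phi$ has no inversion-invariant orbits (Proposition \ref{prop:3.4}), so $\cO ( \phi ) = \Otilda ( \phi ) = \pi$ by Lemma \ref{lemma4.8}, and therefore $B,B',B''$ are genuine orbits of $\phi$.

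The core step is then the identity $\phi \induc \{a,b,c,d,y,z\} = (a,c)(b,d)(y,z)$. Since $\{a,c\} \subseteq A \subseteq B$ with $A,B \in \ob (p,q)$, I would compute $\phi \induc \{a,c\} = \mu_B \induc \{a,c\} = ( \mu_B \induc A ) \induc \{a,c\} = \mu_A \induc \{a,c\}$ using Lemma \ref{lemma4.12}, and likewise $\tau \induc \{a,c\} = ( \tau \induc A ) \induc \{a,c\} = \mu_A \induc \{a,c\}$; thus $\phi$ and $\tau$ induce the same permutation on $\{a,c\}$, and the same reasoning applied to $A' \subseteq B'$ and $A'' \subseteq B''$ gives agreement on $\{b,d\}$ and on $\{y,z\}$. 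Because $B,B',B''$ are three distinct orbits of $\phi$, the six points split across them exactly as they split across $A,A',A''$ for $\tau$, so the three two-cycles assemble into $(a,c)(b,d)(y,z)$. Together with the relation $\lambda_{y,z} \induc \{a,b,c,d\} = (a,b,c,d)$ of (\ref{eqn:6.2}), this says precisely that $\phi$ displays (AC-3) --- contradicting $\phi \in \sncb (p,q)$. This proves $1^o$ for $B,B',B''$, and by symmetry for $C,C',C''$.

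For $2^o$ I would reduce to $1^o$. If $B = B' = B''$, then $A = B \cap C$, $A' = B \cap C'$ and $A'' = B \cap C''$ are three distinct blocks of $\nu$ sharing the same $\pi$-block $B$, so their $\rho$-blocks $C,C',C''$ must be pairwise distinct (coincidence of two of them would force coincidence of two of the distinct blocks $A,A',A''$). This contradicts the part of $1^o$ already established for $C,C',C''$; hence $B=B'=B''$ is impossible, and symmetrically $C=C'=C''$ is impossible. The delicate point of the whole argument is the transfer identity of the third paragraph: one must use the hypothesis that $B,B',B''$ are distinct to guarantee that $a,b,c,d,y,z$ occupy three different orbits of $\phi$, and invoke Lemma \ref{lemma4.12} to see that the induced canonical permutations of $\phi$ and of $\tau$ agree pair-by-pair; the $\gamma$-connectedness of $\phi$ and the combinatorial reduction of $2^o$ to $1^o$ are then routine.
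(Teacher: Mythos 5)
Your proof is correct and follows essentially the same route as the paper's: for $1^o$ one shows that if $B,B',B''$ were distinct they would be three distinct orbits of $\phi$ (the permutation with $\Otilda(\phi)=\pi$), so that $\phi \induc \{a,b,c,d,y,z\} = (a,c)(b,d)(y,z)$ and $\phi$ would display (AC-3), a contradiction; and $2^o$ reduces to $1^o$ exactly as you do, since $B=B'=B''$ would force $C,C',C''$ to be pairwise distinct. Your detour through Lemma \ref{lemma4.12} to identify the induced transpositions is harmless but unnecessary (two elements lying in one orbit of $\phi$ automatically induce a transposition), and your justification that $\cO(\phi)=\Otilda(\phi)$ via $\gamma$-connectedness and Proposition \ref{prop:3.4} is just a slightly expanded version of the paper's appeal to Lemma \ref{lemma4.8}.
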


\begin{proof}
$1^o$ Assume for contradiction that $B,B'$ and $B''$ are three
distinct blocks of $\pi$. Let $\phi$ be the unique permutation in
$\sncb (n-1,1)$ with the property that $\Otilda ( \phi ) = \pi$.
Since $\pi$ has at least two distinct $\gamma$-connecting blocks
(namely $B$ and $B''$), we can use Lemma \ref{lemma4.8} to infer
that $\Otilda ( \phi )$ coincides in this case with the orbit
partition $\cO ( \phi )$. Hence $B,B',B''$ are three distinct orbits
of $\phi$, where $B \supseteq A \supseteq \{ a,c \}$, $B' \supseteq
A' \supseteq \{ b,d \}$, and $B'' \supseteq A'' \supseteq \{ y,z
\}$. It is then clear that
\[
\phi \induc \{ a,b,c,d,y,z \} = (a,c) (b,d) (y,z),
\]
and in conjunction with our standing assumption that $\lambda_{y,z}
\induc \{ a,b,c,d \} = (a,b,c,d)$ (made in Equation
(\ref{eqn:6.2})), this implies that $\phi$ satisfies the crossing
pattern (AC-3) -- contradiction.

The argument that $C,C',C''$ cannot be three distinct blocks of
$\rho$ is identical to the one shown above for $B,B',B''$.

$2^o$ If we had that $B=B'=B''$ then it would follow that $C,C',C''$
are three distinct blocks of $\rho$ (since the intersections $A = B
\cap C$, $A' = B' \cap C'$ and $A'' = B'' \cap C''$ give three
distinct orbits of $\tau$); but this is not possible, by part $1^o$
of the lemma. A similar argument rules out the possibility that
$C=C'=C''$.
\end{proof}

\begin{lemma}   \label{lemma6.4}
Consider the setting of the Remark \ref{remark6.2}. Then $B \neq
B''$ and $C \neq C''$.
\end{lemma}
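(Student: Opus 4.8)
The plan is to prove both inequalities by contradiction, exploiting the symmetry between $\pi$ and $\rho$ so that it suffices to establish $B \neq B''$. By the relation $B'' = -B$ recorded in (\ref{eqn:6.5}), the assumption $B = B''$ is exactly the statement that $B$ is an inversion-invariant block of $\pi$. Since $B \supseteq A \supseteq \{a,c\}$ with $a \in Z$ and $c \in Y$, this block $B$ is simultaneously $\gamma$-connected, so $\pi$ would possess an inversion-invariant $\gamma$-connecting block and Lemma \ref{lemma4.8} would apply.

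First I would unpack what Lemma \ref{lemma4.8} yields. Writing $\phi$ for the unique permutation in $\sncb (n-1,1)$ with $\Otilda ( \phi ) = \pi$, the presence of an inversion-invariant $\gamma$-connecting block forces $\phi$ to be $\gamma$-disconnected and forces $B$ to be a union $Z_1 \cup Z_2$, where $Z_1 \subseteq Y$ and $Z_2 \subseteq Z$ are inversion-invariant orbits of $\phi$. The defining feature of the case $q=1$ enters decisively here: because $Z = \{ n, -n \}$, the only possibility is $Z_2 = Z$, whence $B \cap Y = Z_1$.

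Next I would locate the points $a,b,c,d,y,z$ relative to these orbits. Both $c$ and $y$ lie in $Y$ and in $B$ (indeed $c \in A \subseteq B$ and $y \in A'' \subseteq B'' = B$), hence $c,y \in B \cap Y = Z_1$. On the other hand $b,d$ lie in $B'$, and Lemma \ref{lemma6.3}, combined with our assumption $B = B''$, forces $B' \neq B$. Since $\phi$ is $\gamma$-disconnected, the only $\gamma$-connected block of $\Otilda ( \phi )$ is $B$ itself (the merged zero-block), so the block $B' \neq B$ is a genuine $\gamma$-disconnected orbit of $\phi$; as $b,d \in B' \cap Y$, it must be contained in $Y$, and is therefore an orbit of $\phi \induc Y$ distinct from the zero-block $Z_1$. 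In particular $b,d \notin Z_1$.

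The contradiction then emerges as a disc crossing inside $\phi \induc Y$. Because $b,d$ belong to one orbit of $\phi \induc Y$ while $c,y$ belong to another, the induced permutation is $( \phi \induc Y ) \induc \{ b,c,d,y \} = (b,d)(c,y)$, whereas (\ref{eqn:6.35}) gives $\alpha \induc \{ b,c,d,y \} = \gamma \induc \{ b,c,d,y \} = (b,c,d,y)$, where $\alpha = \gamma \induc Y$. This is precisely the crossing pattern (DC) of Definition \ref{def:2.4}, contradicting $\phi \induc Y \in \sncb (n-1) = \snc (Y, \alpha ) \cap B_Y$ via Proposition \ref{prop:2.5}. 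The argument for $C \neq C''$ is identical with the roles of $\pi$ and $\rho$ interchanged. The one step that demands care — and the real crux — is the use of Lemma \ref{lemma4.8} together with $q=1$ to pin down $B \cap Y = Z_1$ as an honest zero-block of $\phi \induc Y$; once that is in hand, everything reduces to the clean observation that $b,c,d,y$ all sit in $Y$ in the cyclic order $(b,c,d,y)$ with $\{ c,y \}$ and $\{ b,d \}$ separated into two distinct orbits, which is exactly a disc crossing.
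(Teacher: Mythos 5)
Your proposal is correct and follows essentially the same route as the paper: assume $B=B''$, note that $B$ is then an inversion-invariant $\gamma$-connected block of $\pi$, invoke Lemma \ref{lemma4.8} to split it into the orbits $B \cap Y$ and $B \cap Z$ of $\phi$, use Lemma \ref{lemma6.3}.$2^o$ to get that $B'$ is an orbit of $\phi$ distinct from $B \cap Y$, and derive a crossing from $c,y \in B \cap Y$, $b,d \in B'$ and $\gamma \induc \{b,c,d,y\} = (b,c,d,y)$. The only (immaterial) difference is that you restrict to $Y$ and exhibit the disc pattern (DC) for $\phi \induc Y$ with respect to $\alpha$ via Propositions \ref{prop:3.2} and \ref{prop:2.5}, whereas the paper reads the same four points directly as the pattern (AC-1) for $\phi$ with respect to $\gamma$; also, your pinning down of $B \cap Z = Z$ via $q=1$ is harmless but not actually needed.
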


\begin{proof} Assume for contradiction that $B=B''$. We observed
above (see (\ref{eqn:6.5})) that we also have $B''= -B$; hence $B$
is an inversion-invariant block of $\pi$. It is moreover clear that
$B$ is $\gamma$-connected, since $B \cap Y \ni c,y$ and $B \cap Z
\ni a,z$.

Let $\phi$ be the unique permutation in $\sncb (n-1,1)$ with the
property that $\Otilda ( \phi ) = \pi$. By Lemma \ref{lemma4.8}, $B$
is the unique block of $\pi$ which is both inversion-invariant and
$\gamma$-connected. The same lemma tells us that the partition $\cO
( \phi )$ of $X$ into orbits of $\phi$ consists of $B \cap Y$, $B
\cap Z$, and all the blocks of $\pi$ which are different from $B$.
Note in particular that $B'$ has to be an orbit of $\phi$ (indeed,
$B'$ is a block of $\pi$, and cannot be equal to $B=B''$, by part
$2^o$ of the preceding lemma).

But then let us look at the distinct orbits $B \cap Y$ and $B'$ of
$\phi$, and at the elements $c,y \in B \cap Y$ and $b,d \in B'$. All
these four elements belong to $Y$, and we have $\gamma \induc \{
b,c,d,y \} = (b,c,d,y)$ (see Equation (\ref{eqn:6.35}) above). This
leads us to the conclusion that $\phi$ satisfies the crossing
pattern (AC-1) -- contradiction.

So the assumption that $B=B''$ leads to contradiction, hence $B \neq
B''$. The proof that $C \neq C''$ is done in the same way.
\end{proof}

\begin{rem}  \label{remark6.5}
Consider the setting of the Remark \ref{remark6.2}. Due to the facts
proved in this setting in Lemmas \ref{lemma6.3} and \ref{lemma6.4},
we now know that the blocks $B,B',B''$ of $\pi$ are such that either
$B' = B$ or $B'=B''$ (indeed, Lemma \ref{lemma6.4} states that $B
\neq B''$, so having $B' \neq B$ and $B' \neq B''$ would contradict
Lemma \ref{lemma6.3}.1). Similarly, the blocks $C,C',C''$ of $\rho$
are such that either $C' = C$ or $C'=C''$.

Observe that it is not possible to have $B'=B$ and $C'=C$, because
$A = B \cap C$ and $A' = B' \cap C'$ are distinct orbits of the
permutation $\tau$. Similarly, it is not possible to have that
$B'=B''$ and $C'=C''$. So we are either in the case when $B'=B$,
$C'=C''$, or we are in the case when $B'=B''$, $C'=C$. By swapping,
if necessary, the roles of $\pi$ and of $\rho$ in the above
discussion, we can (and will) assume in what follows that it is the
first of these two cases which takes place.

So from now on we can continue our discussion by writing everything
in terms of the blocks $B$ and $C$. Indeed, the blocks $B',B''$ and
$C',C''$ that were introduced in (\ref{eqn:6.2}) can now be replaced
in terms of $B$ and $C$:
\begin{equation}  \label{eqn:6.6}
B'=B, \ B'' = -B, \ \  C'= C'' = -C.
\end{equation}
In terms of $B$ and $C$ alone, the statement of Lemma \ref{lemma6.4}
becomes that $B$ and $C$ are not inversion-invariant; hence we know
that
\begin{equation}  \label{eqn:6.7}
B \cap (-B) = \emptyset , \mbox{ and } C \cap (-C) = \emptyset .
\end{equation}
It is useful to also record here that (as an immediate consequence
of (\ref{eqn:6.6}) and of how $B,B',B''$ and $C,C',C''$ were defined
in Remark \ref{remark6.2}) we have
\begin{equation}  \label{eqn:6.8}
a,b,c,d,-y \in B, \ \ a,-b,c,-d,-y \in C.
\end{equation}
\end{rem}

\begin{prop}  \label{proposition6.6}
Let $\pi, \rho$ be two partitions in $\ncb (n-1,1)$, and consider
their intersection meet $\nu := \pi \wedge \rho$. Suppose that $\nu$
is $\gamma$-connected and has no inversion-invariant blocks, and let
$\tau$ be the permutation of $X$ defined as in Proposition
\ref{proposition5.10} above: the orbit partition of $\tau$ is equal
to $\nu$, and for every block $A$ of $\nu$ we have that $\tau \induc
A = \mu_A$ (the canonical permutation of $A$ introduced in
Definition \ref{definition4.4}). Then $\tau \in \sncb (n-1,1)$.
\end{prop}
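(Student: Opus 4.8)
The plan is to combine Proposition \ref{proposition5.10} with the characterization of $\snc (X,\gamma )$ in Proposition \ref{prop:2.9}, reducing the statement to a single remaining point. By Proposition \ref{proposition5.10} the permutation $\tau$ lies in $B_n$, is compatible with $\gamma$, and displays neither (AC-1) nor (AC-2); hence by Proposition \ref{prop:2.9} it will suffice to show that $\tau$ does not display (AC-3). I would assume for contradiction that it does, which places me squarely in the setting of Remark \ref{remark6.2} and Lemmas \ref{lemma6.3}, \ref{lemma6.4}, so that I may use the blocks $B$ of $\pi$ and $C$ of $\rho$ and the relations (\ref{eqn:6.6})--(\ref{eqn:6.8}). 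Writing $\phi ,\psi \in \sncb (n-1,1)$ for the permutations with $\Otilda ( \phi ) = \pi$ and $\Otilda ( \psi ) = \rho$, I would first note that $\phi$ is $\gamma$-connected (since $\pi$ carries the two distinct $\gamma$-connected blocks $B$ and $-B$), hence by Proposition \ref{prop:3.4} has no inversion-invariant orbits, so that $\cO ( \phi ) = \Otilda ( \phi ) = \pi$ and $B$ is an actual orbit of $\phi$; likewise $C$ is an orbit of $\psi$.

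The key step I would take is to pass from the annulus to the disc on the outer circle $Y$. Setting $\alpha := \gamma \induc Y$ (a single cycle on $Y$), I claim that $\phi \induc Y$ and $\psi \induc Y$ both lie in $\snc (Y,\alpha )$. Indeed, for four points of $Y$, a crossing pattern (DC) of $\phi \induc Y$ relative to $\alpha$ (Definition \ref{def:2.4}) is literally an occurrence of (AC-1) for $\phi$ relative to $\gamma$, because induction composes: $( \phi \induc Y ) \induc S = \phi \induc S$ and $\alpha \induc S = \gamma \induc S$ for $S \subseteq Y$; compatibility of $\phi \induc Y$ with $\alpha$ follows in the same manner from that of $\phi$ with $\gamma$. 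As $\phi$ has no (AC-1), this gives $\phi \induc Y \in \snc (Y,\alpha )$, and similarly for $\psi$. Then $\cO ( \phi \induc Y )$ and $\cO ( \psi \induc Y )$ are \emph{non-crossing} partitions of the cyclically ordered set $(Y,\alpha )$. Since $\phi \induc (B\cap Y) = \gamma \induc (B\cap Y)$ is a single cycle (Remark \ref{remark4.4}.2), the set $B\cap Y$ is a block of $\cO ( \phi \induc Y )$, as is $-B\cap Y$; in the same way $C\cap Y$ and $-C\cap Y$ are blocks of $\cO ( \psi \induc Y )$.

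The contradiction would then be purely geometric. Distinct blocks of a non-crossing partition do not interleave, and $B\cap Y$ is disjoint from its antipode $-B\cap Y$ by (\ref{eqn:6.7}); hence $B\cap Y$ must lie in a half of the circle $Y$, and for the same reason so must $C\cap Y$. But by (\ref{eqn:6.8}) one has $\{ b,c,d,-y \}\subseteq B\cap Y$ and $\{ c,-b,-d,-y \}\subseteq C\cap Y$, while (\ref{eqn:6.35}) fixes the cyclic order of $b,c,d,y$ on $Y$ to be $(b,c,d,y)$, and the antipodal symmetry $i\mapsto -i$ determines the positions of $-b,-d,-y$ from those of $b,d,y$. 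I would record the position of $-y$ relative to $b,c,d$ and check case by case that whenever $\{ b,c,d,-y \}$ fits in a half-circle the set $\{ c,-b,-d,-y \}$ cannot, and conversely. The points involved are distinct: the relations $B\cap (-B)=\emptyset$ and $C\cap (-C)=\emptyset$ in (\ref{eqn:6.7}) exclude the coincidences $c=-b$, $d=-b$, $y=-b$, $y=-d$, $c=-d$, and the one surviving degeneracy $-y=c$ is handled by the same half-circle count. This contradiction shows that $\tau$ cannot display (AC-3), so $\tau \in \sncb (n-1,1)$.

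The hard part will be exactly this last incompatibility of the two half-circle conditions; everything else is bookkeeping on top of the structure already in place. The cleanest execution I foresee is to place the $2(n-1)$ points of $Y$ at positions $0,1,\dots ,2(n-1)-1$ with antipode given by the shift by $n-1$, normalize $b$ to position $0$ (so that $c,d,y$ get increasing positions), and split into four cases according to how many of $c,d,y$ lie in the first half $[0,n-1)$. In each case a short computation of the four cyclic gaps shows that one of $\{ b,c,d,-y \}$, $\{ c,-b,-d,-y \}$ has all gaps strictly below $n-1$ and so fails to lie in any half-circle, which is the desired contradiction.
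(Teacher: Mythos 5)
Your proposal is correct, and while it shares the paper's setup --- the reduction via Propositions \ref{proposition5.10} and \ref{prop:2.9} to excluding (AC-3), and the apparatus of Remark \ref{remark6.2}, Lemmas \ref{lemma6.3}, \ref{lemma6.4} and Remark \ref{remark6.5} producing the blocks $B,C$ with (\ref{eqn:6.6})--(\ref{eqn:6.8}), with $B,-B$ orbits of $\phi$ and $C,-C$ orbits of $\psi$ --- its endgame is genuinely different from the paper's. The paper never leaves the annulus: it splits according to where $-b$ falls in the cyclic order $(b,c,d,y)$ on $Y$, and in each of the two resulting cases exhibits four explicit points ($b,-b,d,y$, alternating between $B$ and $-B$, or $b,c,d,-b$, alternating between $-C$ and $C$) forming an (AC-1) pattern for $\phi$ or for $\psi$ --- a contradiction with no coordinates needed. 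You instead pass to the disc: you prove $\phi \induc Y , \, \psi \induc Y \in \snc (Y, \alpha )$ for $\alpha := \gamma \induc Y$ (a correct and reusable observation, since compatibility restricts and a (DC) occurrence inside $Y$ is literally an (AC-1) occurrence for points of $Y$), conclude that $B \cap Y$ and $C \cap Y$ are each confined to a closed half-circle because each is disjoint from, and non-crossing with, its antipodal image, and then derive the contradiction by counting cyclic gaps. I checked your computation: in each of your four cases (together with the needed sub-split on the position of $-y$ relative to $c$, and the degenerate case $-y=c$) the appropriate one of $\{ b,c,d,-y \}$, $\{ c,-b,-d,-y \}$ does have all cyclic gaps strictly below $n-1$ and hence cannot sit in any half-circle, exactly as you predict; in fact your case split by how many of $c,d,y$ lie in the first half matches the paper's split by the position of $-b$ (your first two cases correspond to the paper's Case 2, your last two to its Case 1). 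What the paper's route buys is brevity --- two cases and no arithmetic; what yours buys is a clean intermediate lemma on restriction to the outer circle and a geometric principle (antipodally disjoint non-crossing blocks lie in half-circles) that makes the contradiction systematic rather than ad hoc, at the price of a longer, coordinate-based verification.
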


\begin{proof}
The only thing to be proved about $\tau$ which was left out in
Proposition \ref{proposition5.10} is that it does not satisfy the
crossing pattern (AC-3). Assume for contradiction that $\tau$
satisfies (AC-3), and consider six distinct points $a,b,c,d,y,z \in
X$ with $y \in Y$ and $z \in Z$, such that the relations
(\ref{eqn:6.2}) from Remark \ref{remark6.2} are holding. The
arguments presented in Remark \ref{remark6.2}, in Lemmas
\ref{lemma6.3} and \ref{lemma6.4}, and in Remark \ref{remark6.5}
then tell us the following: at the cost of doing a cyclic
permutation of $a,b,c,d$ and of swapping if necessary the roles of
$\pi$ and $\rho$, we may assume that there exist a block $B$ of
$\pi$ and a block $C$ of $\rho$ such that (\ref{eqn:6.7}) and
(\ref{eqn:6.8}) hold. Moreover, the cyclic permutation we performed
on $a,b,c,d$ ensures that
\[
a=-z, \ \{ a,z \} =Z, \ \mbox{ and } \ b,c,d,y \in Y, \ \gamma
\induc \{ b,c,d,y \} = (b,c,d,y)
\]
(see Equations (\ref{eqn:6.3}) and (\ref{eqn:6.35}) in Remark
\ref{remark6.2}).

Let $\phi$ and $\psi$ be the permutations in $\sncb (n-1,1)$ which
have $\Otilda ( \phi ) = \pi$ and $\Otilda ( \psi ) = \rho$. Observe
that $B$ is an orbit of $\phi$. Indeed, the only way $B$ could be a
block of $\Otilda ( \phi )$ but not an orbit of $\phi$ would be if
$B$ was the union of two inversion-invariant orbits of $\phi$; but
this would imply that $B= -B$, and we know from (\ref{eqn:6.7}) that
$B \neq -B$. A similar argument shows that $C$ is an orbit of
$\psi$.

Let us next look at the elements $b,-b,c,d,y \in Y$. We claim that
these are five distinct elements of $Y$. Indeed, $b,c,d,y$ have to
be distinct because they are part of the set of six distinct
elements $a,b,c,d,y,z \in X$ that we started with. We next observe
that $-b$ is distinct from $b,c,d$ because $b,c,d \in B$, $-b \in
-B$ (by (\ref{eqn:6.8})), and $B \cap (-B) = \emptyset$ (by
(\ref{eqn:6.7})); a similar argument shows that $-b \neq y$ (we have
$-b \in C$, $y \in -C$, and $C \cap (-C) = \emptyset$).

We consider the cyclic permutation induced by $\gamma$ on the set
$\{ b,-b,c,d,y \}$. Since we know that $\gamma \induc \{ b,c,d,y \}
= (b,c,d,y)$, there are in fact only four possibilities for what
$\gamma \induc \{ b,-b,c,d,y \}$ can be. We group these four
possibilities into two cases, and we argue that each of the two
cases leads to contradiction.

\vspace{6pt}

{\em Case 1.} $\gamma \induc \{ b,-b,c,d,y \} = (b,-b,c,d,y)$, or
$\gamma \induc \{ b,-b,c,d,y \} = (b,c,-b,d,y)$.

In this case we have that $\gamma \induc \{ b,-b,d,y \} =
(b,-b,d,y)$, with $b,d \in B$ and $-b,y \in -B$. Since $B$ and $-B$
are two distinct orbits of $\phi$, it follows that $\tau \induc \{
b,-b,d,y \} = (b,d)(-b,y)$, and we find that $\phi$ satisfies the
crossing pattern (AC-1) -- contradiction.

\vspace{6pt}

{\em Case 2.} $\gamma \induc \{ b,-b,c,d,y \} = (b,c,d,-b,y)$, or
$\gamma \induc \{ b,-b,c,d,y \} = (b,c,d,y,-b)$.

In this case we have that $\gamma \induc \{ b,c,d,-b \} =
(b,c,d,-b)$, with $b,d \in -C$ and $c,-b \in C$. Since $C$ and $-C$
are two distinct orbits of $\psi$, it follows that  $\tau \induc \{
b,c,d,-b \} = (b,d)(c,-b)$, and we find that $\psi$ satisfies the
crossing pattern (AC-1) -- contradiction.
\end{proof}

\begin{cor}   \label{corollary6.7}
If $\pi , \rho$ are two partitions in $\ncb (n-1,1)$, then the
intersection meet $\pi \wedge \rho$ also belongs to $\ncb (n-1,1)$.
\end{cor}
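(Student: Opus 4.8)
The plan is to prove the stronger statement that $\ncb (n-1,1)$ is closed under the intersection meet $\wedge$; by Remark \ref{remark6.1} this closure is all that is needed to deduce the lattice property of Theorem \ref{theorem3}. So I fix $\pi, \rho \in \ncb (n-1,1)$, set $\nu := \pi \wedge \rho$, and argue by a three-way case analysis on the structure of $\nu$, exactly along the lines of the narrowing-down carried out in Remark \ref{remark5.11}. The first two cases will be settled by results already established in Section 5, while the third case is where the work of the present section is used.

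First I would dispose of the two easy cases. If $\nu$ possesses an inversion-invariant block, then Corollary \ref{corollary5.5} gives $\nu \in \ncb (n-1,1)$ at once. If instead $\nu$ is $\gamma$-disconnected, then Corollary \ref{corollary5.8} yields the same conclusion. Hence the only situation that still requires attention is the one that was isolated in Remark \ref{remark5.11}: the partition $\nu$ is $\gamma$-connected and has no inversion-invariant blocks.

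In this remaining case the plan is to produce a concrete permutation realizing $\nu$ and to show that it lies in $\sncb (n-1,1)$. Proposition \ref{proposition5.10} applies and furnishes a permutation $\tau \in B_n$ with $\cO ( \tau ) = \nu$ and $\tau \induc A = \mu_A$ for every block $A$ of $\nu$; that proposition already guarantees that $\tau$ is compatible with $\gamma$ and displays neither (AC-1) nor (AC-2). The decisive input is then Proposition \ref{proposition6.6}, whose content is precisely that, in the specialized setting $p = n-1$, $q = 1$, this $\tau$ cannot display the crossing pattern (AC-3) either. Consequently $\tau$ meets the non-crossing criterion of Proposition \ref{prop:2.9}, so $\tau \in \sncb (n-1,1)$. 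Finally, since $\nu = \cO ( \tau )$ has no inversion-invariant blocks, Notation \ref{def:1.2} gives $\Otilda ( \tau ) = \cO ( \tau ) = \nu$, and therefore $\nu = \Otilda ( \tau ) \in \ncb (n-1,1)$ by the very definition of $\ncb (n-1,1)$.

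I expect the genuine difficulty of the whole development to reside entirely inside Proposition \ref{proposition6.6} (the elimination of (AC-3)), which exploits the fact that $Z = \{ n, -n \}$ has only two elements. Once that proposition is granted, the present corollary is merely a matter of assembling the three cases and observing that the conclusion $\tau \in \sncb (n-1,1)$ translates back into the desired statement about $\nu$ through the identity $\Otilda ( \tau ) = \nu$, valid because $\nu$ carries no inversion-invariant block.
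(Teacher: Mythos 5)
Your proposal is correct and follows essentially the same route as the paper: the paper's own proof of Corollary \ref{corollary6.7} simply combines Remark \ref{remark5.11} (which contains your case analysis via Corollaries \ref{corollary5.5} and \ref{corollary5.8}) with Proposition \ref{proposition6.6}, exactly as you do. Your write-up merely makes explicit the final step $\Otilda(\tau) = \cO(\tau) = \nu$, which the paper leaves implicit.
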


\begin{proof} This follows immediately when the statement of
Proposition \ref{proposition6.6} is added to the discussion made in
Remark \ref{remark5.11} at the end of the preceding section.
\end{proof}

Finally, Theorem \ref{theorem3} follows from Corollary
\ref{corollary6.7}, in the way observed in the above Remark
\ref{remark6.1}.

$\ $

\begin{center}
{\bf\large 7. The case of type D}
\end{center}
\setcounter{section}{7} \setcounter{equation}{0} \setcounter{thm}{0}
\setcounter{subsection}{0}

\noindent The results of the paper were stated in the introduction
in the framework of the groups $B_n$, but all three Theorems
\ref{theorem1}, \ref{theorem2} and \ref{theorem3} have counterparts
that hold in the framework of the Weyl groups $D_n$. In this section
we present these counterparts of type D.

We will use the notations $p, q$, $n := p+q$, $X,Y, \gamma$ that
were introduced in Section 3. The Weyl group $D_n$ is the subgroup
of $\cS (X)$ defined as
\[
D_n = \Bigl\{ \tau \in \cS (X)
\begin{array}{cc}
\vline & \tau (-i) = - \tau (i), \ \forall \, i \in X, \mbox{ and} \\
\vline & \mbox{$\tau$ is an even permutation}
\end{array}  \Bigr\} .
\]
(Thus $D_n$ is a subgroup of index 2 of $B_n$.) The analogue of type
D for the set of annular non-crossing permutations $\sncb (p,q)$
from Definition \ref{def:3.1} is
\begin{equation}  \label{eqn:1.153}
\sncd (p,q) := \snc (X, \gamma ) \cap D_n .
\end{equation}

On the other hand we use on $D_n$ a length function $\lgd$, which is
defined with respect to the following set of generators of $D_n$:
\begin{equation}  \label{eqn:1.151}
\{ (i,j)(-i,-j) \mid 1 \leq i<j \leq n \} \cup \{ (i,-j)(-i,j) \mid
1 \leq i<j \leq n \} .
\end{equation}
That is, for every $\tau \in D_n$ we have that $\lgd ( \tau )$ is
the smallest possible $k$ such that $\tau$ can be factored as a
product of $k$ generators from (\ref{eqn:1.151}). The length
function $\lgd$ then defines a partial order on $D_n$, by the same
kind of formula as used in type B: for $\sigma , \tau \in D_n$ we
put
\begin{equation}  \label{eqn:1.152}
\sigma \leq \tau \ \ecdef \ \lgd ( \tau ) = \lgd ( \sigma ) + \lgd (
\sigma^{-1} \tau ).
\end{equation}

Now, the counterpart of type D for Theorem \ref{theorem1} turns out
to follow easily from the theorem itself, due to the following
easily checked observation about length functions: the length
function $\lgd$ on $D_n$ is in fact {\em the restriction} to $D_n$
of the length function of type B, $\lgb$ on $B_n$. This in turn
implies that for $\sigma , \tau \in D_n$ we have the equivalence
\begin{equation}  \label{eqn:1.154}
\Bigl( \sigma \leq \tau \mbox{ in } D_n \Bigr) \ \Leftrightarrow
\Bigl( \sigma \leq \tau \mbox{ in } B_n \Bigr) .
\end{equation}
But then we immediately get that:

\begin{cor}  \label{cor:7.1}
$\sncd (p,q) = \{ \tau \in D_n \mid \tau \leq \gamma \}$.
\end{cor}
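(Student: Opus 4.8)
The plan is to deduce the type~D statement formally from Theorem~\ref{theorem1}, intersecting everything with the subgroup $D_n \subseteq B_n$ so that no new combinatorial analysis of non-crossing drawings is required. The backbone of the argument is the observation that, because $D_n$ is contained in $B_n$, the defining equation (\ref{eqn:1.153}) can be rewritten as
\[
\sncd (p,q) \ = \ \snc (X, \gamma ) \cap D_n \ = \ \bigl( \snc (X, \gamma ) \cap B_n \bigr) \cap D_n \ = \ \sncb (p,q) \cap D_n ,
\]
where the last equality is just Definition~\ref{def:3.1}. So the first step is to reduce the problem to understanding $\sncb (p,q) \cap D_n$.

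Next I would substitute the two facts that are already available. Theorem~\ref{theorem1} identifies $\sncb (p,q)$ with the interval $\{ \tau \in B_n \mid \tau \leq \gamma \}$ for the order coming from $\lgb$; intersecting with $D_n$ therefore gives
\[
\sncd (p,q) \ = \ \{ \tau \in D_n \mid \tau \leq \gamma \ \mbox{in } B_n \} .
\]
It remains only to exchange ``$\leq$ in $B_n$'' for ``$\leq$ in $D_n$'' on the right-hand side, and this is precisely the equivalence (\ref{eqn:1.154}): since $\lgd$ is the restriction of $\lgb$ to $D_n$, the two partial orders agree on pairs of elements of $D_n$. Applying (\ref{eqn:1.154}) with an arbitrary $\tau \in D_n$ as the smaller element and the fixed $\gamma$ as the larger one rewrites the displayed set as $\{ \tau \in D_n \mid \tau \leq \gamma \ \mbox{in } D_n \}$, which is the assertion of the corollary.

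The single point that actually needs verification — and hence the only genuine hypothesis behind the argument — is that $\gamma$ itself lies in $D_n$, for otherwise neither the interval on the right-hand side nor the invocation of (\ref{eqn:1.154}) with $\gamma$ as the top element would make sense. This I would dispatch at once: the two cycles of $\gamma$ have lengths $2p$ and $2q$, each an even number, so each cycle is an odd permutation, and their product $\gamma$ is even; thus $\gamma \in D_n$. Beyond this elementary parity check the corollary is a purely formal consequence of Theorem~\ref{theorem1} together with the length-restriction remark preceding it, so I anticipate no substantive obstacle.
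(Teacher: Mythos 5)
Your proof is correct and is essentially the paper's own argument: the same chain of identifications $\sncd(p,q) = \sncb(p,q) \cap D_n$, Theorem \ref{theorem1}, and the equivalence (\ref{eqn:1.154}), merely written in the reverse direction. Your explicit parity check that $\gamma \in D_n$ is a nice touch that the paper leaves implicit, but it does not change the substance of the argument.
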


\begin{proof}
We have that
\begin{align*}
\{ \tau \in D_n \mid \tau \leq \gamma \} & = \{ \tau \in B_n \mid
\tau \leq \gamma \} \cap D_n
       & \mbox{ (because of (\ref{eqn:1.154})) }             \\
& = \sncb (p,q) \cap D_n  & \mbox{ (by Theorem \ref{theorem1})} \\
& = ( \snc (X, \gamma ) \cap B_n) \cap D_n  & \mbox{ (by
                                definition of $\sncb (p,q)$) }    \\
& = \snc (X, \gamma ) \cap D_n  &                                 \\
& = \sncd (p,q)  & \mbox{ (by definition of $\sncd (p,q)$). }
\end{align*}
\end{proof}

Similarly, the counterpart of type D for Theorem \ref{theorem2} is a
corollary of Theorem \ref{theorem2}.

\begin{cor} \label{cor:7.2}
Let us denote
\begin{equation}  \label{eqn:1.155}
\ncd (p,q) := \{ \Otilda ( \tau ) \mid \tau \in \sncd (p,q) \}.
\end{equation}
Then the map
\begin{equation}  \label{eqn:1.156}
\sncd (p,q) \ni \tau \mapsto \Otilda ( \tau ) \in \ncd (p,q)
\end{equation}
is a poset isomorphism, where $\sncd (p,q)$ is partially ordered as
an interval of $D_n$, while $\ncd (p,q)$ is partially ordered by
reverse refinement.
\end{cor}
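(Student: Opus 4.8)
The plan is to deduce the entire statement from Theorem \ref{theorem2} by a pure restriction argument, using the fact that $D_n$ sits inside $B_n$ together with the order-equivalence (\ref{eqn:1.154}). First I would record the inclusion
\[
\sncd (p,q) = \snc (X, \gamma ) \cap D_n \subseteq \snc (X, \gamma ) \cap B_n = \sncb (p,q),
\]
which holds simply because $D_n \subseteq B_n$. Consequently the map in (\ref{eqn:1.156}) is nothing but the restriction to $\sncd (p,q)$ of the map $\tau \mapsto \Otilda ( \tau )$ that was studied on $\sncb (p,q)$ in Theorem \ref{theorem2}; likewise $\ncd (p,q) \subseteq \ncb (p,q)$ as sets of partitions, and the reverse-refinement order on $\ncd (p,q)$ is just the suborder induced from $\ncb (p,q)$.

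Next I would settle bijectivity. Injectivity of (\ref{eqn:1.156}) is immediate, since it is a restriction of the injective map on $\sncb (p,q)$. Surjectivity onto $\ncd (p,q)$ is true essentially by definition: the set $\ncd (p,q)$ was introduced in (\ref{eqn:1.155}) as exactly $\{ \Otilda ( \tau ) \mid \tau \in \sncd (p,q) \}$, which is precisely the image of the map (\ref{eqn:1.156}). Hence the map is a bijection between $\sncd (p,q)$ and $\ncd (p,q)$.

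It remains to check that it is an order isomorphism, and here the only new ingredient beyond Theorem \ref{theorem2} is the equivalence (\ref{eqn:1.154}), which says that for $\sigma, \tau \in D_n$ one has $\sigma \leq \tau$ in $D_n$ if and only if $\sigma \leq \tau$ in $B_n$. For order preservation, suppose $\sigma \leq \tau$ in $D_n$ with $\sigma, \tau \in \sncd (p,q)$; by (\ref{eqn:1.154}) we get $\sigma \leq \tau$ in $B_n$, and then the order preservation asserted in Theorem \ref{theorem2} gives $\Otilda ( \sigma ) \leq \Otilda ( \tau )$ in reverse refinement. Conversely, if $\Otilda ( \sigma ) \leq \Otilda ( \tau )$ then, since Theorem \ref{theorem2} provides a poset \emph{isomorphism} on $\sncb (p,q)$ (so that its inverse is order preserving as well), we obtain $\sigma \leq \tau$ in $B_n$; as $\sigma, \tau \in D_n$, the equivalence (\ref{eqn:1.154}) lets us descend to $\sigma \leq \tau$ in $D_n$.

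I do not anticipate a genuine obstacle: the whole statement is a formal consequence of Theorem \ref{theorem2} and the compatibility of the two partial orders expressed in (\ref{eqn:1.154}). The one point that must not be overlooked is that (\ref{eqn:1.154}) is applied only to pairs $\sigma, \tau$ that both lie in $D_n$, which is exactly our situation; this is what guarantees that the restriction of the $B_n$-order to $D_n$ reproduces the intrinsic $D_n$-order, so that passing between the two groups neither collapses comparabilities nor introduces spurious ones.
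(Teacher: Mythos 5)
Your proposal is correct and follows essentially the same route as the paper: both arguments restrict the poset isomorphism of Theorem \ref{theorem2} from $\sncb(p,q)$ to $\sncd(p,q)$, using the equivalence (\ref{eqn:1.154}) to identify the $D_n$-order with the induced $B_n$-order and the observation that reverse refinement on $\ncd(p,q)$ is intrinsic (induced from $\ncb(p,q)$). Your write-up merely spells out the bijectivity and the two directions of order-compatibility more explicitly than the paper does.
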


\begin{proof}
From the equivalence (\ref{eqn:1.154}) it follows that the partial
order considered on $\sncd (p,q)$ is the one induced from $\sncb
(p,q)$. On the other hand it is clear that the partial order on
$\ncd (p,q)$ is the one induced from $\ncb (p,q)$ (since for $\pi ,
\rho \in \ncd (p,q)$ the inequality ``$\pi \leq \rho$'' means that
every block of $\rho$ is a union of blocks of $\pi$, and this is
independent of whether $\pi , \rho$ are viewed as elements of $\ncd
(p,q)$ or as elements of $\ncb (p,q)$). But then the fact that in
(\ref{eqn:1.156}) we have a poset isomorphism follows by
appropriately restricting the poset isomorphism (\ref{eqn:1.06})
from Theorem \ref{theorem2}.
\end{proof}

Finally, let us discuss the counterpart of type D for Theorem
\ref{theorem3}. This does hold, that is, $\ncd (n-1,1)$ is a lattice
with respect to the partial order given by reverse refinement. But
this is not an immediate corollary of Theorem \ref{theorem3}.
Indeed, $\ncd (n-1,1)$ is a {\em subposet} of $\ncb (n-1,1)$, but is
not a {\em sublattice} of $\ncb (n-1,1)$ -- for $\pi , \rho \in \ncd
(n-1,1)$, the meet of $\pi$ and $\rho$ in $\ncd (n-1,1)$ doesn't
generally coincide with the ``intersection meet'' $\pi \wedge \rho$
described in Theorem \ref{theorem3} ! So here a different kind of
argument is required; but we are fortunate that we only need to
invoke the work previously done by Athanasiadis and Reiner in the
paper \cite{AR04}.

$\ $

\begin{rem}  \label{remark1.6}
For $n \geq 2$, the poset $\ncd (n-1,1)$ coincides exactly with the
poset constructed in \cite{AR04}, and denoted there as ``$NC^{(D)}
(n)$''. Thus $\ncd (n-1,1)$ is a lattice, by Proposition 3.1 of
\cite{AR04}.
\end{rem}

\vspace{10pt}

The annular interpretation for the lattice $NC^{(D)} (n)$ of
Athanasiadis and Reiner was observed independently by Krattenthaler
and M\"uller in Section 7 of their recent paper \cite{KM07}.

We conclude by pointing out a couple of clues that have to be
followed in order to make the connection between the poset $NC^{(D)}
(n)$ from \cite{AR04} and the poset $\ncd (n-1,1)$ of this paper.
The construction made in \cite{AR04} goes by drawing $1,2, \ldots ,$
$n-1, -1, -2, \ldots , -(n-1)$ around a circle, and by placing both
$n$ and $-n$ at the center of the circle. But if instead of putting
$n$ and $-n$ right at the center we put them on a small circle
concentric with the one containing $\pm 1, \pm 2 , \ldots , \pm
(n-1)$, then the partitions considered in the definition of
$NC^{(D)} (n)$ (see beginning of Section 3 in \cite{AR04}) become
{\em annular} non-crossing. Another point in \cite{AR04} which looks
puzzling at first sight is that if a partition $\pi \in NC^{(D)}
(n)$ has a zero-block (a block $B$ such that $B= -B$), then $\pm n$
are forced to belong to that block. But this corresponds exactly to
the passage from $\Omega ( \tau )$ to $\Otilda ( \tau )$ in Notation
\ref{def:1.2}. Indeed, if a permutation $\tau \in \sncd (n-1,1)$ has
inversion-invariant orbits, then it turns out that $\tau$ must have
exactly two such orbits, $M$ and $N$, where $M \subseteq \{ 1,
\ldots , n-1 \} \cup \{ -1, \ldots , -(n-1) \}$ and $N$ is forced to
be $\{ n, -n \}$; so the partition $\Otilda ( \tau )$ has exactly
one inversion-invariant block, $M \cup N$, which is forced to
contain $\pm n$.

$\ $

$\ $

{\bf Acknowledgements.} The starting point for the present paper was
a careful reading of the construction made by Athanasiadis and
Reiner in \cite{AR04}, and the observation that this construction
actually produces annular non-crossing partitions. The first named
author acknowledges his participation to a workshop on ``Braid
groups, clusters and free probability'' organized by the American
Institute of Mathematics in 2005, where he became acquainted to the
results in \cite{AR04}. We also acknowledge many enlightening
discussions that both authors had with Ian Goulden at various stages
of preparation of the paper, and several useful comments and
suggestions made by the referees for the paper.

$\ $

$\ $

Alexandru Nica: University of Waterloo.

Address: Department of Pure Mathematics, University of Waterloo,

Waterloo, Ontario N2L 3G1, Canada.

Email: anica@math.uwaterloo.ca

$\ $

Ion Oancea: University of Waterloo.

Address: Department of Pure Mathematics, University of Waterloo,

Waterloo, Ontario N2L 3G1, Canada.

Email: ioancea@math.uwaterloo.ca

\end{document}